\documentclass[11pt,a4paper]{article}
\usepackage[T1]{fontenc}
\usepackage{lmodern}
\usepackage[margin=26mm]{geometry}
\usepackage{amsmath,amssymb,amsthm,mathtools}
\usepackage{graphicx,booktabs,array}
\usepackage{microtype}
\usepackage{enumitem}
\usepackage[font=small,labelfont=bf]{caption}
\usepackage[hidelinks]{hyperref}
\usepackage{bookmark}
\usepackage[section]{placeins}
\usepackage{flafter,float}
\usepackage{needspace}
\numberwithin{equation}{section}
\newtheorem{theorem}{Theorem}[section]
\newtheorem{lemma}[theorem]{Lemma}
\newtheorem{proposition}[theorem]{Proposition}
\newtheorem{corollary}[theorem]{Corollary}
\theoremstyle{remark}

\newcommand{\R}{\mathbb R}
\newcommand{\N}{\mathbb N}
\newcommand{\Z}{\mathbb Z}
\newcommand{\T}{\mathbb T}
\newcommand{\EE}{\mathbb E}
\newcommand{\PP}{\mathbb P}
\newcommand{\dd}{\,\mathrm d}
\newcommand{\one}{\mathbf 1}

\newcommand{\norm}[1]{\lVert#1\rVert}
\newcommand{\TV}{\mathrm{TV}}
\DeclareMathOperator{\Var}{Var}
\DeclareMathOperator{\Cov}{Cov}
\DeclareMathOperator{\diag}{diag}
\DeclareMathOperator*{\argmax}{arg\,max}
\DeclareMathOperator{\Law}{Law}
\DeclareMathOperator{\med}{med}
\DeclareMathOperator{\diam}{diam}

\setlist[enumerate]{label=\textnormal{(\roman*)},leftmargin=2.2em,itemsep=2pt,topsep=4pt}
\allowdisplaybreaks[1]
\hypersetup{pdftitle={Single-graph inference for fractal Gaussian networks},
 pdfsubject={Gaussian multiplicative chaos, single-network inference, conservative Monte Carlo calibration}}
\title{Single-graph inference for fractal Gaussian networks}
\author{Chunhao Cai\\
\small School of Mathematics (Zhuhai), Sun Yat-Sen University\\
\small \texttt{caichh9@mail.sysu.edu.cn}}
\date{}

\begin{document}
\maketitle

\begin{abstract}
We study inference on the strength of Gaussian multiplicative chaos from one
geometric graph with unobserved vertex positions.  In the planar model, the
edge-count statistic has a piecewise deterministic limit with a transition at
$\gamma=1$, whereas degree quantiles consistently estimate $\nu=\gamma^2/2$
throughout the subcritical range.  Fractional moments give uniform
finite-sample risk bounds.  At finite resolution, common graph envelopes
calibrate tests over continuous parameter boxes with unknown Poisson
intensity.  They give simultaneous coverage under adaptive refinement for a
fixed statistic, and conditional coverage for an independently piloted
size--degree residual on a fixed partition.  For the exact periodic
FFT--pixel approximation, we prove explicit total-variation rates for the
annealed spatial Cox law and the observed graph law, uniformly on every
compact subcritical range $0\le\gamma\le\Gamma<2$.  The rates allow intensity
to grow with resolution and imply uniform asymptotic coverage after
recalibration at each resolution.  Numerical studies examine parameter
refinement, residual calibration, and multiresolution stability.
\end{abstract}

\medskip
\noindent\textbf{Keywords.} Gaussian multiplicative chaos; fractal Gaussian
networks; degree quantiles; Monte Carlo tests; simulator approximation.

\medskip
\noindent\textbf{2020 Mathematics Subject Classification.} Primary 60G57;
Secondary 60G55, 05C80, 62F03, 62F25.

\section{Introduction}
\label{sec:intro}

Fractal Gaussian networks use Gaussian multiplicative chaos (GMC) as a random
sampling measure and connect sampled vertices according to their latent
Euclidean distances.  Ghosh, Balasubramanian and Yang introduced the model,
computed mean subgraph counts, and proposed inference from a single adjacency
graph \cite{FGN}.  A single graph, however, samples one random environment, so
its typical statistics need not follow exponents obtained after averaging over
environments, as is familiar in multifractal analysis
\cite{Angeletti,Bertacco}.  Related recovery problems for ordinary random
geometric graphs are studied in \cite{Atamanchuk}.

We determine the single-environment limits of edge and degree statistics and
construct finite-sample Monte Carlo calibration over continuous parameter
boxes using common graph envelopes \cite{Dufour,GlazerStark,JouneauTorres}.
We also calibrate a residual score that uses the observed vertex count, in the
spirit of separating score construction from calibration in simulator-based
inference \cite{LF2I}.  Finally, we quantify the passage from the periodic
FFT--pixel simulator to its continuous target.  General GMC approximation
theory gives qualitative identification \cite{Shamov,JunnilaSaksman,Berestycki};
spectral, pixel, and discrete torus constructions are treated in
\cite{DelloSchiavoManifold,DelloSchiavoTorus}.  For the even-grid approximation
used here, we obtain total-variation rates for the annealed Cox and observed
graph laws and transfer them to calibrated confidence sets.  Figure~\ref{fig:observation}
shows the observation: one finite graph, including isolated vertices, with
the field and positions unobserved.

\subsection{The observation and the parameter}
\label{subsec:model}

Let $D=(0,1)^2$, and let $X$ be a centered log-correlated generalized
Gaussian field whose covariance on $D$ is
\begin{equation}
 \begin{aligned}
 K(x,y)&=\log\frac1{|x-y|}+g_0(x,y),\qquad
 g_0\in C(\overline D\times\overline D),\qquad \norm{g_0}_\infty\le B_0,\\
 K(x,y)&\ge0,\qquad x\ne y.
 \end{aligned}
 \label{eq:covariance}
\end{equation}
Assume that $X$ extends to a log-correlated field on a neighborhood of
$\overline D$ with continuous bounded covariance remainder.  For a standard
regularization $X_\varepsilon$ and $0\le\gamma<2$, use full-variance
normalization
\begin{equation}
 \mu_\gamma(\dd x)
 =\lim_{\varepsilon\downarrow0}
   e^{\gamma X_\varepsilon(x)
      -\gamma^2\EE X_\varepsilon(x)^2/2}\dd x,
 \qquad M_\gamma=\mu_\gamma(D),\qquad \nu=\frac{\gamma^2}{2},
 \label{eq:gmc}
\end{equation}
where the limit is in probability in the weak topology of finite measures.
The total mass is retained, so $\EE\mu_\gamma(\dd x)=\dd x$,
$\EE M_\gamma=1$, and $\mu_0=\dd x$.  All logarithms are natural, and
$\log_+x=\max(0,\log x)$.

For a deterministic intensity scale $n\ge1$ and a known $0<c\le1$, sample a
Poisson point process of intensity $n\mu_\gamma$.  Conditional on its positions,
connect each unordered pair independently with probability
\begin{equation}
 k_{r_n}(x,y)=e^{-|x-y|^2/r_n^2},\qquad r_n=cn^{-1/2}.
 \label{eq:kernel}
\end{equation}
Let $G_n$ be the resulting unlabelled finite graph, with isolated vertices
retained; write $N_n$, $E_n$, and $d_v$ for its vertex count, edge count, and
vertex degrees.  Conditional on the environment,
\begin{equation}
 N_n\mid\mu_\gamma\sim\operatorname{Poisson}(nM_\gamma).
 \label{eq:random-size}
\end{equation}
Thus the deterministic model scale $n$ and the observable size $N_n$ are
different quantities.  The parameter of interest is $\nu$; the covariance and
connection rule are specified.  Write $\PP_{\nu,n}$ and $\EE_{\nu,n}$ for
probability and expectation over the environment, vertices, and edge marks.

\begin{figure}[!htbp]
 \centering
 \includegraphics[width=\textwidth]{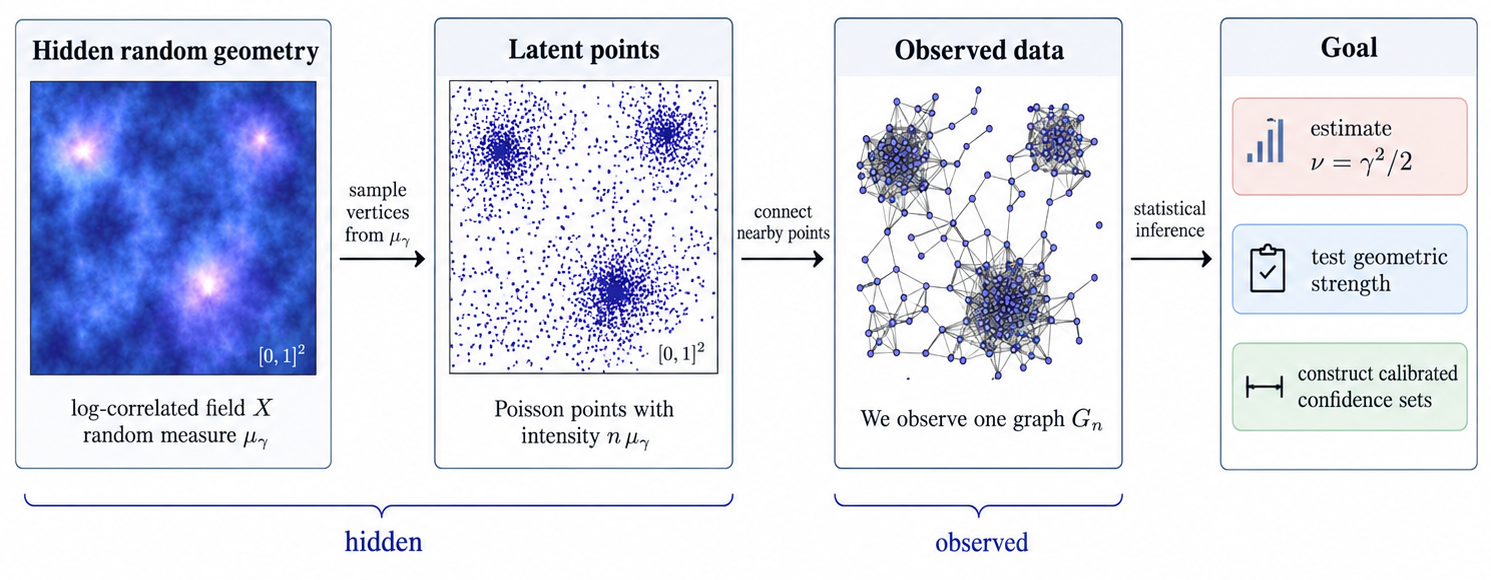}
 \caption{The observation protocol.  A hidden log-correlated field generates
 a GMC measure, vertices are sampled from that measure, and distance-dependent
 edges produce one observed network.  Inference uses only its adjacency
 structure and vertex count.  The displayed network layout is an illustration,
 not observed spatial coordinates.  The left-hand image depicts a coarse
 approximation to the random measure; it is not an ordinary density of the
 limiting GMC.}
 \label{fig:observation}
\end{figure}
\FloatBarrier

\subsection{What a single graph estimates}
\label{subsec:statistics}

For a finite graph $G$, write $N=N(G)$ and $E=E(G)$.  If
$d_{(1)}\le\cdots\le d_{(N)}$ are its ordered degrees, set
\begin{equation}
 \med_v d_v=
 \begin{cases}
 d_{(\lceil N/2\rceil)},&N\ge1,\\
 0,&N=0,
 \end{cases}
 \qquad
 T_D(G)=\frac{2\log(1+\med_v d_v)}{\log(N\vee2)},\qquad
 T_E^0(G)=\frac{\log(E\vee1)}{\log(N\vee2)}-1.
 \label{eq:statistics}
\end{equation}
Then $0\le T_D\le2$.  For $0<\bar\nu<2$, define
$\widehat\nu_n=\Pi_{[0,\bar\nu]}T_D(G_n)$, where $\Pi$ denotes interval
projection.
The limiting map for the edge statistic is
\begin{equation}
 b(\nu)=
 \begin{cases}
  \nu,&0\le\nu\le\tfrac12,\\
  2\sqrt{2\nu}-\nu-1,&\tfrac12<\nu<2.
 \end{cases}
 \label{eq:edge-map}
\end{equation}
The limits below are conditional on one fixed environment; the tail and risk
bounds are under the joint law $\PP_{\nu,n}$.

\begin{theorem}
\label{thm:single}
For each $0\le\gamma<2$, for almost every realization of $\mu_\gamma$, as $n\to\infty$,
\begin{equation}
 T_E^0(G_n)\longrightarrow b(\nu),\qquad
 T_D(G_n)\longrightarrow\nu
 \quad\text{in conditional probability}.
 \label{eq:main-limits}
\end{equation}
For every $0<\bar\nu<2$, there are finite positive constants
$A_1,A_2,A_3,\epsilon_0$, depending only on $K,c,\bar\nu$, such that
for $n\ge e$ and $0<\epsilon\le\epsilon_0$,
\begin{align}
 \sup_{0\le\nu\le\bar\nu}
 \PP_{\nu,n}(|\widehat\nu_n-\nu|>\epsilon)
 &\le A_1e^{-\epsilon^2\log n/(64\bar\nu)}+\frac{A_2}{n},
 \label{eq:main-tail}\\
 \shortintertext{\textit{and}}
 \sup_{0\le\nu\le\bar\nu}
 \EE_{\nu,n}|\widehat\nu_n-\nu|^2
 &\le\frac{A_3}{\log n}.
 \label{eq:main-risk}
\end{align}
\end{theorem}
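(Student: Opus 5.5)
The two limits are proved by working conditionally on a fixed environment and reducing them to multifractal statements about $\mu_\gamma$ at scale $r_n=cn^{-1/2}$. The tail and risk bounds are then proved under the joint law, using fractional moments of ball masses.

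\textbf{Step 1: reduction to ball masses.} Conditionally on $\mu_\gamma$, the expected degree of a vertex at $x$ is
\[
\lambda_n(x)=n\int k_{r_n}(x,y)\,\mu_\gamma(\dd y),
\]
which is comparable to $n\mu_\gamma(B(x,r_n))$ up to Gaussian-tail corrections. By Palm theory the degree is Poisson$(\lambda_n(x))$, and $x$ is distributed as a point of the Poisson process.

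For $T_D$, the key input is the Peyrière (rooted) measure. Under $\mu_\gamma(\dd x)/M_\gamma$, almost surely
\[
\frac{\log\mu_\gamma(B(x,r))}{\log r}\to 2+\nu,
\]
because $\mu_\gamma$-typical points are $\gamma$-thick. Hence for $\mu_\gamma$-most $x$,
\[
n\,\mu_\gamma(B(x,r_n))=n\cdot n^{-(2+\nu)/2+o(1)}=n^{-\nu/2+o(1)}\to 0.
\]
This cannot be right, since it would make the median degree $0$. So I recheck the intended normalization. $T_D=2\log(1+\med d)/\log N$ with limit $\nu$ forces $\med d\approx n^{\nu/2}$. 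With exponent $\alpha$ for $\mu_\gamma(B(x,r))\approx r^{\alpha}$, we need $n\cdot n^{-\alpha/2}=n^{\nu/2}$, i.e.\ $\alpha=2-\nu$. This is the typical exponent of Lebesgue-typical points, and also the exponent $2(1-\nu/2)$ relevant at $\mu$-typical points in the Lebesgue-density sense. I will pin down the exact thick-point exponent (Rhodes--Vargas local dimension $2-\gamma^2/2$ for the \emph{measure} at $\mu$-typical points) and use it.

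Given that exponent, the degrees of a $1-o(1)$ fraction of the $N_n\approx nM_\gamma$ vertices lie in $[n^{\nu/2-\delta},n^{\nu/2+\delta}]$. Poisson concentration then gives $\med d=n^{\nu/2+o(1)}$. Since $\log N_n=\log n+O(1)$, this yields $T_D\to\nu$.

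\textbf{Step 2: the edge limit.} For $T_E^0$, write
\[
\EE[E_n\mid\mu]\asymp n^2\int \mu(B(x,r_n))\,\mu(\dd x),
\]
i.e.\ $n^2$ times a discretized correlation ($q=2$ moment) sum. The single-environment correlation dimension of GMC is $\tau(2)$ with a transition: for $\nu\le1/2$ it equals $2-\nu$, while for larger $\nu$ the sum is dominated by rare thick points. Optimizing over the thickness $\alpha$ with the almost-sure multifractal spectrum (thick points exist only for $\alpha\le\gamma\cdot 2$, etc.) produces $2-\nu$ versus $1+\nu-2\sqrt{2\nu}+\ldots$. Matching with $b(\nu)$ gives
\[
\log E_n/\log n-1\to b(\nu).
\]
Concentration of $E_n$ around its conditional mean follows from the conditional Poisson/Bernoulli structure: a variance bound via the Mecke formula, divided by the squared mean, tends to $0$. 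The $\log$ scale tolerates factor-$n^{o(1)}$ errors.

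\textbf{Step 3: uniform bounds.} Under $\PP_{\nu,n}$, I control three things.
\begin{enumerate}
\item $N_n\notin[n/2,2n]$. This has probability $O(1/n)$ via negative/positive moments of $M_\gamma$ (uniform for $\gamma\le\bar\gamma$) and Poisson Chebyshev. This produces $A_2/n$.
\item The fraction of vertices with $\log(n\mu(B(x,r_n)))$ outside $(\nu/2\pm\epsilon/2)\log n$. By Palm/Markov this is bounded by $\EE$ of a rooted-measure probability. Under the rooted measure, $\log\mu(B(x,r))$ is Gaussian with mean $(\nu-2)\log(1/r)$-type and variance $2\nu\log(1/r)+O(1)$, up to fractional-moment corrections that are uniform on $\nu\le\bar\nu$. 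A Chernoff bound with an exponential moment of order $\epsilon/(8\bar\nu)$ gives $e^{-\epsilon^2\log n/(64\bar\nu)}$.
\item Poisson degree fluctuations given $\lambda_n$. These are exponentially small when $\lambda_n\ge n^{\epsilon}$, and harmless near $\nu\approx0$ because $\log(1+d)$ is then small.
\end{enumerate}
The median moves only if at least half the vertices are bad, and Markov on the bad fraction gives \eqref{eq:main-tail}. The risk bound \eqref{eq:main-risk} follows by integrating the tail: $|\widehat\nu_n-\nu|\le 2$, $\int_0^{\epsilon_0}2\epsilon\,e^{-\epsilon^2\log n/C}\dd\epsilon=O(1/\log n)$, and the remainder is $O(1/n)$ plus $4\,\PP(|\cdot|>\epsilon_0)$, which is also $O(1/\log n)$.

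\textbf{Main obstacle.} I expect the main obstacle to be the almost-sure optimization in the edge limit for $\nu>1/2$. It needs matching upper and lower bounds on the single-realization $L^2$ sum, using thick-point counts rather than annealed moments. I would first try the upper bound by truncating at thick points and using first moments with a Borel--Cantelli argument along $n=2^k$, and then prove the lower bound by a second-moment count of $\alpha$-thick squares at the optimizing $\alpha=\sqrt{2}\cdot\gamma$-ish.
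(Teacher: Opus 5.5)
Your outline matches the paper's architecture: a conditional Mecke/Palm analysis for the two limits, a Chernoff bound on the bad-vertex count plus size control for the tail, and tail integration for the risk. Two steps, however, fail as written.

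\textbf{The edge limit.} Step~2 does not prove it, and the value you do state is wrong. Everything rests on the almost-sure identity $\lim_{r\downarrow0}\log C_r/\log r=\tau_\gamma$ for $C_r=\iint k_r(x,y)\,\mu_\gamma(\dd x)\mu_\gamma(\dd y)$. Here $\tau_\gamma=2-\gamma^2=2-2\nu$ for $\nu\le\frac12$ and $\tau_\gamma=(2-\gamma)^2$ for $\nu>\frac12$. Since $E_n\approx n^{2-\tau_\gamma/2}$, your value $2-\nu$ would give $T_E^0\to\nu/2$. For $\nu>\frac12$ you match $b(\nu)$ rather than derive it.

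An annealed first moment cannot supply the upper bound for $\gamma>1$. Indeed $\EE C_r\asymp r^{2-\gamma^2}$ with $2-\gamma^2<(2-\gamma)^2$, and $\EE C_r=\infty$ for $\gamma\ge\sqrt2$. So your truncation/Borel--Cantelli plan would need an almost-sure cap on thickness at all scales, which you do not provide.

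The paper instead uses the almost-sure multifractal formalism of GMC.
\begin{itemize}
\item Upper bound on $C_r$: the packing exponent $\limsup_{r\downarrow0}\log\operatorname{Pack}_2(\mu_\gamma,r)/(-\log r)=-\tau_\gamma$ applies after cutting the kernel at $r^{1-\delta}$. One then dominates $\int\mu_\gamma(B(x,r^{1-\delta}))\,\mu_\gamma(\dd x)$ by squared grid-cell masses, with a 16-coloring making the balls disjoint.
\item Lower bound on $C_r$: from the spectrum $\dim_HF_\alpha=f_\gamma(\alpha)$, take a uniform piece of $F_\alpha$ of dimension $>s$. A maximal $3r$-separated subset has $\gtrsim r^{-s}$ points, and their disjoint $r$-balls have mass $\ge r^{\alpha+\epsilon}$. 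This gives $C_r\gtrsim r^{2\alpha-s+2\epsilon}$.
\end{itemize}
Minimizing $2\alpha(t)-f_\gamma(\alpha(t))=2+\gamma^2-2\gamma t+t^2/2$ over $-2<t<2$ gives $\tau_\gamma$. The transition at $\gamma=1$ is where the optimal thickness $t=2\gamma$ reaches the maximal thickness $t=2$.

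In Step~1 you also have the exponents reversed. $2+\nu$ is the Lebesgue-typical ($0$-thick) exponent, and $2-\nu$ is the $\mu_\gamma$-typical ($\gamma$-thick) one. The latter is the exact-dimensionality theorem; it should be cited, not read off from the desired conclusion.

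\textbf{The size window.} In Step~3(i), the claim $\PP_{\nu,n}(N_n\notin[n/2,2n])=O(1/n)$ is false. Conditionally $N_n/n\to M_\gamma$, and for $\gamma>0$ the total mass $M_\gamma$ is unbounded (its moments of order above $4/\gamma^2$ diverge). So this probability stays bounded away from zero, and your tail bound does not decay.

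You need a window that widens with $n$, as in the paper: $F=\{N_n\ge2,\ ne^{-w}\le N_n\le ne^{w}\}$ with $w=\epsilon^2\log n/(64\bar\nu)$. Markov's inequality, with $\EE M_\gamma=1$ and $\EE[n/(N_n+1)]\le\EE M_\gamma^{-1}\le J_-$, gives $\PP(F^c)\le(1+2J_-)e^{-w}+2J_-/n$. The bound $J_-$ is uniform in $\nu$ by $K\ge0$ and Kahane comparison with $\bar\gamma$. Thus the $A_2/n$ term comes only from $\{N_n\le1\}$, and on $F$ the window perturbs $T_D$ by at most about $\epsilon^2/64$.

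The price is a factor $e^{w}$ in the Markov step, $\PP(Z_n(t)\ge ne^{-w}/2)\le2e^{w}\EE Z_n(t)/n$, so the Chernoff exponent must be $2w$. The paper obtains exactly this from $t=\epsilon\log n/4$ and $p=\epsilon/(4\bar\nu)\le p_0$, which is what fixes $\epsilon_0$.

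\textbf{Uniform fractional moments.} The uniform-in-$\nu$ fractional-moment control that Step~3(ii) only asserts is a genuine step. The paper proves $\EE\int a_r^{\pm p}\,\dd\mu_\gamma\le C_\pm r^{\pm(2-\nu)p-\nu p^2}$ from square-mass moments $\EE\mu_\gamma(Q)^q\le J_0\ell^{\zeta_\nu(q)}$, via a lattice kernel and Young's inequality. It combines this with $\EE(1+D_\lambda)^{p}\le1+\lambda^{p}$ and $\EE(1+D_\lambda)^{-p}\le\lambda^{-p}$. That absorbs the Poisson fluctuations and the small-$\nu$ regime into one exponential Markov bound, replacing your separate case analysis in Step~3(iii).
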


On $N\ge2$ and $E\ge1$, $T_E^0$ equals the single-pass statistic proposed in
\cite[Section~4.8, equation~(15), author version]{FGN}.  Its motivation is the
mean edge-count exponent $1+\nu$ for $\nu<1$
\cite[Theorem~4.3, author version]{FGN}.  In contrast, \eqref{eq:main-limits} gives the typical
single-environment exponent $1+b(\nu)$.  For $1<\gamma<\sqrt2$,
\begin{equation}
 \nu-b(\nu)=(\gamma-1)^2.
 \label{eq:bias-gap}
\end{equation}
The degree statistic reads the mass-typical local dimension, whereas the edge
count reads a second-order mass statistic.  This distinction also separates a
$\mu_\gamma$-sampled vertex from the deterministic inserted location in
\cite[Theorem~3.1, author version]{FGN}.

\subsection{Calibration over continuous parameter sets}
\label{subsec:calibration-main}

Let $\theta=(\gamma,n)$ range over a fixed rectangle
\begin{equation}
 \Theta=[0,\Gamma]\times[n_-,n_+],\qquad
 0\le\Gamma<2,\qquad 1\le n_-<n_+<\infty.
 \label{eq:parameter-domain}
\end{equation}
A finite-resolution simulator uses a finite partition $(A_i)_i$ of the
observation domain and a centered Gaussian vector $Y=(Y_i)_i$ with known
covariance.  If $v_i=\EE Y_i^2$, its density is
\begin{equation}
 w_{\gamma,h}(x)=e^{\gamma Y_i-\gamma^2v_i/2},\qquad x\in A_i.
 \label{eq:pixel-density}
\end{equation}
Here $h$ indexes spatial resolution.  Its graph law is $Q_{\theta,h}$, obtained
from intensity $nw_{\gamma,h}\dd x$ and the specified connection kernel.
The construction also applies to a periodic domain with its periodic distance.

Fix one statistic $T$, either $T_D$ or
$T_E=\Pi_{[0,1]}T_E^0$.  For each of $R$ independent simulator environments,
use common Poisson and edge marks for all $\theta\in\Theta$.  For every
parameter rectangle $B\subset\Theta$, Section~\ref{sec:calibration} constructs
simultaneous bounds
\begin{equation}
 L_{B,r}\le T(G_\theta^{(r)})\le U_{B,r},
 \qquad \theta\in B,\quad 1\le r\le R.
 \label{eq:envelope-intro}
\end{equation}
For the observed value $t=T(G)$, set
\begin{align}
 p_B^+(t)&=\frac{1+\sum_{r=1}^R\one\{U_{B,r}\ge t\}}{R+1},&
 p_B^-(t)&=\frac{1+\sum_{r=1}^R\one\{L_{B,r}\le t\}}{R+1},
 \label{eq:box-pvalues}\\
 \shortintertext{and}
 p_B(t)&=\min\{1,2\min(p_B^+(t),p_B^-(t))\}.&&
 \label{eq:box-twosided}
\end{align}
Fix $0<\alpha<1$.  Starting from a fixed countable rectangle-refinement
tree, a measurable rule may select the next rectangle using the data and all
previous calculations.  A rectangle is discarded only if $p_B(t)\le\alpha$;
let $\Theta_j$ be the union of all undiscarded rectangles after step $j$.  The same
$R$ coupled replicas are reused throughout.

For parameter localization, we also use the two-dimensional observation
\begin{equation}
 S(G)=\bigl(\log(N\vee2),T_D(G)\bigr)^\top.
 \label{eq:joint-observation}
\end{equation}
Let $\mathcal B$ be a fixed finite rectangle partition of $\Theta$.  An
independent pilot sample $\mathcal D_0$ supplies $m_B\in\R^2$ and a positive
definite matrix $\Sigma_B$ for each $B\in\mathcal B$.  Put
\begin{align}
 \xi_B&=\frac{(\Sigma_B)_{21}}{(\Sigma_B)_{11}},&
 \sigma_B^2&=(\Sigma_B)_{22}
             -\frac{(\Sigma_B)_{21}^2}{(\Sigma_B)_{11}}>0,
 \label{eq:residual-coefficients}\\
 \shortintertext{and}
 a_B&=\frac{(-\xi_B,1)^\top}{\sigma_B},&
 \rho_B(G)&=\bigl|a_B^\top(S(G)-m_B)\bigr|.
 \label{eq:residual-score}
\end{align}

For the independent calibration replicas, the graph envelopes give
$\ell_{B,r}\le S(G_\theta^{(r)})\le u_{B,r}$ coordinatewise for $\theta\in B$.
Let
\begin{equation}
 U^\rho_{B,r}=\max_{z\in[\ell_{B,r},u_{B,r}]}
                 |a_B^\top(z-m_B)|,\qquad
 p_B^\rho(G)=\frac{1+\sum_{r=1}^R\one\{U^\rho_{B,r}\ge\rho_B(G)\}}{R+1}.
 \label{eq:residual-calibration}
\end{equation}
The residual confidence set is
\begin{equation}
 C_\alpha^\rho(G)=
 \bigcup_{\substack{B\in\mathcal B\,:\,p_B^\rho(G)>\alpha}}
       \{\gamma^2/2:(\gamma,n)\in B\}.
 \label{eq:residual-confidence-set}
\end{equation}
Assume that the pilot, calibration sample, and target graph are independent,
and that $\mathcal B$ and the pilot scores are fixed before calibration.  Let
$P_h^{\rm aux}$ be the law of all auxiliary randomness at resolution $h$: the
$R$ calibration environments, their common Poisson and edge marks, and, for the
residual procedure, the independent pilot $\mathcal D_0$.

\Needspace{12\baselineskip}
\begin{theorem}
\label{thm:calibration}
\begin{enumerate}
\item For a fixed statistic and the common-sample refinement procedure,
\begin{equation}
 \sup_{\theta\in\Theta}
 (Q_{\theta,h}\otimes P_h^{\rm aux})
 \{\exists j\ge0:\theta\notin\Theta_j\}\le\alpha.
 \label{eq:main-calibration}
\end{equation}
The retained sets satisfy
\[
 \Theta_{j+1}\subseteq \Theta_j,\qquad j\ge0.
\]
For each fixed $\theta$ and every nested sequence $B_k\ni\theta$ with
$B_{k+1}\subseteq B_k$ and $\diam B_k\to0$,
\[
 P_h^{\rm aux}\!\left\{\exists k_0<\infty:\
 L_{B_k,r}=U_{B_k,r}=T(G_\theta^{(r)})
 \ \forall k\ge k_0,\ 1\le r\le R\right\}=1.
\]
\item For the fixed partition and pilot residual scores, each $B\in\mathcal B$
and $\theta\in B$ satisfy, for almost every realization of $\mathcal D_0$,
\begin{align}
 (Q_{\theta,h}\otimes P_h^{\rm aux})
 \{p_B^\rho(G)\le u\mid\mathcal D_0\}&\le u,
 \qquad 0\le u\le1,
 \label{eq:main-residual-size}\\
 \shortintertext{\textit{and}}
 (Q_{\theta,h}\otimes P_h^{\rm aux})
 \{\gamma^2/2\in C_\alpha^\rho(G)\mid\mathcal D_0\}
     &\ge1-\alpha.
 \label{eq:main-residual-coverage}
\end{align}
\end{enumerate}
\end{theorem}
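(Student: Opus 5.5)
The plan is to reduce both parts to the standard exchangeability argument for Monte Carlo tests, with the envelopes supplying monotonicity. The structure is: a single ``oracle'' event controls every rectangle containing the true $\theta$, and the envelopes make each box $p$-value dominate the oracle $p$-value.

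\emph{Part (i), simultaneity.} Fix $\theta\in\Theta$. Let $G$ be the target graph drawn from $Q_{\theta,h}$, and let $G^{(r)}_\theta$, $1\le r\le R$, be the coupled replicas at the true parameter. Under $Q_{\theta,h}\otimes P_h^{\rm aux}$, the $R+1$ values $t_0=T(G)$ and $t_r=T(G^{(r)}_\theta)$ are i.i.d. This holds because each replica, viewed at the single parameter $\theta$, is an independent draw from $Q_{\theta,h}$; coupling across $\theta$ does not affect the marginal at fixed $\theta$. Define the oracle $p$-values $\tilde p^{+}=(1+\sum_r\one\{t_r\ge t_0\})/(R+1)$, the analogous $\tilde p^{-}$, and $\tilde p=\min\{1,2\min(\tilde p^+,\tilde p^-)\}$.

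The standard rank argument gives $\PP(\tilde p^{\pm}\le u)\le u$, and this holds with ties because ties only increase the count. A union bound then gives $\PP(\tilde p\le\alpha)\le\alpha$. For any rectangle $B\ni\theta$, \eqref{eq:envelope-intro} gives $U_{B,r}\ge t_r\ge L_{B,r}$, hence $p_B^{\pm}(t_0)\ge\tilde p^{\pm}$ and so $p_B(t_0)\ge\tilde p$.

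Now suppose $\theta\notin\Theta_j$ for some $j$. Every rectangle of the tree containing $\theta$ and ever considered must then have been discarded. The rule is adaptive, but this still forces some discarded $B\ni\theta$ with $p_B(t_0)\le\alpha$. Consequently $\{\exists j:\theta\notin\Theta_j\}\subseteq\{\tilde p\le\alpha\}$. The key point is that this containment is deterministic, so the adaptivity of the selection rule is irrelevant. We need only that the tree and envelope construction are fixed functions of the replicas.

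Nestedness $\Theta_{j+1}\subseteq\Theta_j$ is immediate, since rectangles are only discarded or refined into children covering them. The argument must also handle boundary points shared by several rectangles. For this I would define $\Theta_j$ as a union, so a point survives if any containing rectangle survives, and then $\theta\notin\Theta_j$ requires all of them to be discarded.

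\emph{Part (i), envelope collapse.} This depends on the Section~\ref{sec:calibration} construction, and I expect it to be the main obstacle. The idea is that for fixed marks and a fixed environment, the graph $G^{(r)}_\theta$ is piecewise constant in $\theta$. Vertex $i$ is present iff its Poisson mark falls below a threshold monotone in $n w_{\gamma,h}$, and edges use the kernel with $r_n$ monotone in $n$. There are finitely many relevant marks (a.s.), and each switches at a threshold parameter set of Lebesgue-null boundary. Almost surely $\theta$ lies at no switching boundary, because $\theta$ is fixed and the mark distributions are continuous. A small enough neighborhood of $\theta$ then has constant graph, so the envelopes, computed as exact extremes over counts or via monotone bounds, coincide with $T(G^{(r)}_\theta)$ once $\diam B_k$ is small. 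The need for the envelopes to be tight for constant graphs is where I must check the actual construction.

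\emph{Part (ii).} Condition on $\mathcal D_0$. By independence, $a_B$ and $m_B$ are then fixed, and $\rho_B(G)$, $\rho_B(G^{(r)}_\theta)$ are i.i.d. Since $S(G^{(r)}_\theta)\in[\ell_{B,r},u_{B,r}]$, we have $U^\rho_{B,r}\ge\rho_B(G^{(r)}_\theta)$. Therefore $p^\rho_B(G)$ dominates the oracle rank $p$-value, which gives \eqref{eq:main-residual-size}. For coverage, $\gamma^2/2\notin C^\rho_\alpha$ implies $p^\rho_B\le\alpha$ for the cell $B\ni\theta$ (fix one such cell). This yields \eqref{eq:main-residual-coverage}.
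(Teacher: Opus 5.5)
Your proposal is correct and follows the paper's proof essentially step for step. The oracle rank at the true $\theta$ ($q_\theta$ in the paper) is dominated pathwise by every box $p$-value with $B\ni\theta$, so the adaptive failure event lies inside the single event $\{q_\theta\le\alpha\}$. Part (ii) is the same domination argument, applied conditionally on $\mathcal D_0$ to a fixed cell $B_*(\theta)$.

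The envelope-collapse step you left to check does go through with the paper's construction. The node thresholds $\lambda^\pm_{B_k}$ and edge thresholds $k_{n^\pm_{B_k}}$ are attained extrema over the box and converge to those at $\theta$. The almost surely strict comparisons at $\theta$ therefore give $G^-_{B_k}=G^{(r)}_\theta=G^+_{B_k}$ for large $k$. With no optional vertices left, the median formulas return the exact lower median, and the small-size conventions are exact.
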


\subsection{Transferring calibration to continuous chaos}
\label{subsec:transfer-main}

Write $P_\theta$ for the continuous square-domain graph law; for
$\theta=(\gamma,n)$, also write $P_{\gamma,n}=P_\theta$ and
$Q_{\gamma,n,h}=Q_{\theta,h}$.  Let $K_h$ be a bounded continuous or
finite-pixel covariance approximation with chaos measure $\mu_{\gamma,h}$,
and assume, for each fixed $\gamma<2$,
\begin{equation}
 \mu_{\gamma,h}\Longrightarrow\mu_\gamma
 \quad\text{in distribution for the weak topology of finite measures on }\overline D.
\end{equation}
For the quantitative comparison, assume in addition that $K$ has bounded
continuous regularizations $K_\delta$, with chaos measures $\mu_{\gamma,\delta}$, such
that, for $x\ne y$,
\begin{align}
 K_\delta(x,y)&\longrightarrow K(x,y),\qquad
 K_\delta(x,y)\le\log_+\frac1{|x-y|}+B_1,\notag\\
 \shortintertext{\textit{and}}
 \mu_{\gamma,\delta}&\Longrightarrow\mu_\gamma
 \quad\text{in distribution for the weak topology}.
\end{align}
All comparisons use the same domain, intensity, and connection rule.

For $B=[\gamma_-,\gamma_+]\times[n_B^-,n_B^+]$ with
$\gamma_+<\sqrt2$, define
\begin{equation}
 \eta_B=1\wedge (n_B^+)^2
 \iint_{D^2}\sup_{\gamma\in[\gamma_-,\gamma_+]}
 \bigl|e^{\gamma^2K(x,y)}-e^{\gamma^2K_h(x,y)}\bigr|\dd x\dd y.
 \label{eq:eta-box}
\end{equation}
A certified upper bound may replace this integral.  With fixed simulator
resolution $h$, replace $p_B$ by
$\widetilde p_B=\min\{1,p_B+\eta_B\}$ and denote the corresponding retained
sets by $\widetilde\Theta_j$.  Total variation is normalized as
$d_{\TV}(P,Q)=\sup_A|P(A)-Q(A)|$.

\begin{theorem}
\label{thm:transfer}
For every fixed $n\ge1$ and $0\le\gamma<2$ under the preceding
weak-convergence assumption,
\begin{equation}
 d_{\TV}(P_{\gamma,n},Q_{\gamma,n,h})\longrightarrow0
 \quad\text{as }h\to\infty.
 \label{eq:main-qualitative}
\end{equation}
In the range $\gamma^2<2$, the discrepancy obeys the fixed-$h$ bound
\begin{equation}
 d_{\TV}(P_{\gamma,n},Q_{\gamma,n,h})
 \le 1\wedge n^2\iint_{D^2}
 \bigl|e^{\gamma^2K(x,y)}-e^{\gamma^2K_h(x,y)}\bigr|\dd x\dd y.
 \label{eq:main-tv}
\end{equation}
Consequently, if $\Gamma<\sqrt2$, the corrected refinement procedure satisfies
\begin{equation}
 \sup_{\theta\in\Theta}
 (P_\theta\otimes P_h^{\rm aux})
 \{\exists j\ge0:\theta\notin\widetilde\Theta_j\}\le\alpha.
 \label{eq:main-corrected}
\end{equation}
\end{theorem}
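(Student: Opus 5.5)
The plan has three parts: the qualitative TV convergence, the $L^2$-type bound \eqref{eq:main-tv}, and the corrected coverage.

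\emph{Reduction to the Cox law.} The graph is a measurable function of the point configuration and of independent edge marks whose law, given positions, does not depend on the environment. Hence $d_{\TV}(P_{\gamma,n},Q_{\gamma,n,h})$ is at most the total variation between the annealed Cox laws of the point processes. It therefore suffices to compare the Cox process with directing measure $n\mu_\gamma$ to the one with directing measure $n\mu_{\gamma,h}$.

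\emph{Qualitative part \eqref{eq:main-qualitative}.} First I would couple the two environments. By Skorokhod representation, the weak convergence $\mu_{\gamma,h}\Rightarrow\mu_\gamma$ gives a coupling with $\mu_{\gamma,h}\to\mu_\gamma$ weakly almost surely. Conditionally on the environments, the TV distance between $\operatorname{PPP}(n\mu)$ and $\operatorname{PPP}(n\mu')$ is bounded by $n\norm{\mu-\mu'}_{\TV}$, and weak convergence does not control this. I would therefore argue at the level of finite-dimensional Laplace functionals. The graph is a.s. finite, and conditional on having $k$ vertices its law is a continuous function of the point positions mixed with the kernel. Since the kernel is continuous, the law of $G$ depends on the environment through continuous bounded functionals $\int\!\cdots\!\int F\,d\mu^{\otimes k}$ together with the Poisson weight $e^{-nM}$. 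Each of these converges in distribution, because $\mu^{\otimes k}$ converges weakly when $\mu$ does. Thus $P(G=g)$ converges for every finite unlabelled graph $g$. Both laws live on a countable space, and pointwise convergence of probabilities on a countable space gives TV convergence by Scheffé.

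\emph{Quantitative bound \eqref{eq:main-tv}.} Here I would avoid bounding the Cox process distance directly. For $\gamma^2<2$ both directing measures have finite second moments: $\EE[\mu_\gamma(dx)\mu_\gamma(dy)]=e^{\gamma^2K(x,y)}dx\,dy$, and the same holds for $K_h$. I would then use the interpolation or thinning coupling on $\gamma^2$ through regularizations $K_\delta$ (Kahane-type interpolation). The alternative is a direct second-moment bound: the TV distance between mixed Poisson laws is bounded by $\EE$ of the first-order difference plus the second-factorial-moment discrepancy. The first moments agree, since both measures are normalized to Lebesgue. The remaining term is $n^2\iint|e^{\gamma^2K}-e^{\gamma^2K_h}|$, and the bound is truncated at $1$. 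This is the main obstacle, because agreement of the first two moment measures does not by itself imply closeness in TV. My first attempt would be a Gaussian interpolation $X_t=\sqrt t X+\sqrt{1-t}Y$. Differentiating $\EE F(\text{Cox graph})$ in $t$ gives, via Gaussian integration by parts, an integral of $(K-K_h)$ against a second-order term whose weight is at most a two-point intensity. Making this rigorous uses the $K_\delta$ regularizations and passes to the limit using $K_\delta\le\log_+|x-y|^{-1}+B_1$ with dominated convergence, which is valid since $\gamma^2<2$ makes $|x-y|^{-\gamma^2}$ integrable.

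\emph{Corrected coverage \eqref{eq:main-corrected}.} Finally, for $\theta\in B$, the bound \eqref{eq:main-tv} with $n\le n_B^+$ and the supremum over $\gamma$ gives $d_{\TV}\le\eta_B$. Rejection of the box containing $\theta$ is an event in $(G,\text{aux})$, and its probability under $Q_{\theta,h}\otimes P_h^{\rm aux}$ is controlled by Theorem~\ref{thm:calibration}(i). The adaptive procedure means $\theta$ may be tested on many boxes along its refinement path, so I would follow the proof of Theorem~\ref{thm:calibration}. There the simultaneous event $\{\exists j:\theta\notin\Theta_j\}$ is contained in a single-replica-rank event, namely that $t$ is extreme relative to the coupled $T(G_\theta^{(r)})$, which has probability at most $\alpha$. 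For the corrected procedure, that rank event is shifted by the largest $\eta_B$ along the path. Changing the target law from $Q$ to $P$ costs at most $d_{\TV}$, and the added $\eta_B$ absorbs this difference.
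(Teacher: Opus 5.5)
Your arguments for \eqref{eq:main-qualitative} and \eqref{eq:main-tv} follow the paper's route. For the first, the Janossy formula writes $\Pr(G\simeq g)$ as the expectation of $\frac{n^m}{m!}e^{-nM}\int\Phi_g\,\dd\mu^{\otimes m}$. This functional is weakly continuous and bounded by $m^me^{-m}/m!$; the multiple integral alone is unbounded, so the exponential weight is what makes the continuous-mapping step work. Discrete Scheff\'e then finishes. For the second, you interpolate linearly between the covariances of a bounded regularization $K_\delta$ and $K_h$, then let $\delta\to0$. Three facts should be stated explicitly, since they produce exactly the right side of \eqref{eq:main-tv}:
\begin{enumerate}
\item The first-order parts $\lambda_i\EE[\nabla_i^+H]$ of the diagonal second derivatives cancel the $t$-derivative of the normalization $-\gamma^2K_t(x,x)/2$. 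Without this cancellation a term of order $n\int|K_\delta(x,x)-K_h(x,x)|\,\dd x$ survives, and it diverges as $\delta\to0$.
\item $|\nabla^+_x\nabla^+_yH|\le2$ and $\EE_t[\lambda_t(x)\lambda_t(y)]=n^2e^{\gamma^2K_t(x,y)}$.
\item Since $K_t$ is linear in $t$, $\int_0^1\gamma^2|K_1-K_0|e^{\gamma^2K_t}\,\dd t=|e^{\gamma^2K_1}-e^{\gamma^2K_0}|$ exactly.
\end{enumerate}
The left side passes to the limit $\delta\to0$ by your first part applied to $\mu_{\gamma,\delta}\Rightarrow\mu_\gamma$, plus the triangle inequality. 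Drop the ``direct second-moment bound'' alternative: it is not valid in general. The second-moment form is an output of the interpolation, not an input.

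The genuine gap is in the corrected coverage, where you say the single rank event is ``shifted by the largest $\eta_B$ along the path.'' A box $B\ni\theta$ is discarded only if $p_B+\eta_B\le\alpha$. Since $p_B\ge q_\theta$, a discard forces $q_\theta\le\alpha-\eta_B$ for that particular box. The union of these events over boxes containing $\theta$ is governed by the \emph{smallest} correction encountered, not the largest. A deep box with small $\eta_B$ can be discarded while $q_\theta>\alpha-\max\eta_B$, so $\{q_\theta\le\alpha-\max\eta_B\}$ does not contain the noncoverage event. Moreover, any extremum of $\eta_B$ over the adaptively chosen path is a random threshold, so Proposition~\ref{prop:rank} cannot be applied at that level.

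The fix uses a deterministic threshold you already have. Put $\eta_\theta=d_{\TV}(P_\theta,Q_{\theta,h})$. By \eqref{eq:main-tv}, $\eta_B\ge\eta_\theta$ for every rectangle $B\ni\theta$, so $p_B+\eta_B\ge q_\theta+\eta_\theta$ and $\{\exists j\ge0:\theta\notin\widetilde\Theta_j\}\subseteq\{q_\theta\le\alpha-\eta_\theta\}$. This event is empty when $\eta_\theta\ge\alpha$, because $q_\theta>0$. Its $Q_{\theta,h}\otimes P_h^{\rm aux}$-probability is at most $(\alpha-\eta_\theta)_+$. The auxiliary randomness has the same law under both models, so $d_{\TV}(P_\theta\otimes P_h^{\rm aux},Q_{\theta,h}\otimes P_h^{\rm aux})=\eta_\theta$. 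Changing measure adds exactly $\eta_\theta$, which gives the bound $\alpha$ for every $\theta\in\Theta$.
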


The condition $\Gamma<\sqrt2$ is the integrability boundary of the direct
second-order comparison.  For the periodic sequence below, count truncation
extends the quantitative approximation to every $\Gamma<2$.

We next define the periodic approximation used in Section~\ref{sec:numerics}.
Let $\T^2=\R^2/\Z^2$, write $d_{\T}$ for its flat geodesic distance, represent
each point by its unique element of $[0,1)^2$, let $h\ge8$ be even, and set
\begin{equation}
 \pi_hx=h^{-1}\lfloor hx\rfloor,\qquad
 \Lambda_h=(\Z/h\Z)^2,\qquad
 I_h=\{-h/2,\ldots,h/2-1\}\times
     \{-h/2+1,\ldots,h/2\}.
 \label{eq:fft-indexing}
\end{equation}
For $r\in\Lambda_h$, let $\widetilde r$ be its unique representative in $I_h$
and define
\begin{equation}
 \widehat K_h(r)=
 \begin{cases}
 \displaystyle\frac{e^{-9|\widetilde r|^2/h^2}}
                       {2\pi|\widetilde r|^2},&r\ne0,\\[2mm]
 0,&r=0,
 \end{cases}
 \qquad
 \phi_{h,r}(x)=e^{2\pi i\widetilde r\cdot\pi_hx}.
 \label{eq:fft-weights}
\end{equation}
The finite-group covariance and its pixel chaos are
\begin{align}
 K_h^{\T}(x,y)
 &=\sum_{r\in\Lambda_h}\widehat K_h(r)
       \phi_{h,r}(x)\overline{\phi_{h,r}(y)},
 &v_h&=K_h^{\T}(x,x)=\sum_{r\in\Lambda_h}\widehat K_h(r),
 \label{eq:fft-covariance}\\
 \shortintertext{and}
 \mu_{\gamma,h}^{\T}(\dd x)
 &=e^{\gamma X_h(\pi_hx)-\gamma^2v_h/2}\dd x,
 &&\Cov(X_h(\pi_hx),X_h(\pi_hy))=K_h^{\T}(x,y).
 \label{eq:fft-chaos}
\end{align}
The continuous target is the zero-mode torus field with covariance
\begin{equation}
 K_{\T}(z)=\sum_{k\in\Z^2\setminus\{0\}}
       \frac{e^{2\pi i k\cdot z}}{2\pi|k|^2},
 \label{eq:torus-target}
\end{equation}
where the series converges in $L^2(\T^2)$ and hence also distributionally; its
two-variable covariance kernel is $(x,y)\mapsto K_{\T}(x-y)$.  Let
$\mu_\gamma^{\T}$ be its full-variance subcritical GMC, which is almost surely
finite and non-atomic for $\gamma<2$; canonical torus GMC constructions appear
in \cite{DavidRhodesTorus}.  From $\mu_{\gamma,h}^{\T}$ and
$\mu_\gamma^{\T}$, respectively, sample vertices at intensity $n$ and connect
them independently with probability
\begin{equation}
 k_n^{\T}(x,y)=e^{-\pi n d_{\T}(x,y)^2}.
 \label{eq:torus-connection}
\end{equation}
After deleting positions and labels while retaining isolated vertices, denote
the two finite-graph laws by $Q_{\gamma,n,h}^{\T}$ and
$P_{\gamma,n}^{\T}$.  For $\theta=(\gamma,n)$, write
$Q_{\theta,h}^{\T}=Q_{\gamma,n,h}^{\T}$,
$P_\theta^{\T}=P_{\gamma,n}^{\T}$, and $\nu(\theta)=\gamma^2/2$.
Before connection, let $\Xi_{\gamma,n,h}^{\T}$ and
$\Xi_{\gamma,n}^{\T}$ denote the corresponding annealed Cox point
processes on $\T^2$.
For $\Theta$ in \eqref{eq:parameter-domain}, put
\begin{equation}
 \Delta_h(\Theta)=
 \sup_{(\gamma,n)\in\Theta}
 d_{\TV}(Q_{\gamma,n,h}^{\T},P_{\gamma,n}^{\T}).
 \label{eq:periodic-discrepancy}
\end{equation}

For $a\ge0$ and $0<s<1$, put
\begin{equation}
 \beta(a,s)=(1-s)\left(2-\frac a2(2-s)\right),\qquad
 \beta_{\Gamma}(s)=\beta(\Gamma^2,s).
 \label{eq:rate-exponents}
\end{equation}
For every $0<\Gamma<2$ there exist $0<s<1$ and
$1<p<4/\Gamma^2$ such that $0<\beta_{\Gamma}(s)<1$.  For $0\le\Gamma<\sqrt2$, also set
\begin{equation}
 \omega_\Gamma(h)=
 \begin{cases}
  h^{-1},&0\le\Gamma<1,\\
  h^{-1}\log h,&\Gamma=1,\\
  h^{-(2-\Gamma^2)},&1<\Gamma<\sqrt2.
 \end{cases}
 \label{eq:second-moment-rate}
\end{equation}

\begin{theorem}
\label{thm:periodic-rate}
Fix $0<\Gamma<2$, choose $0<s<1$ and $1<p<4/\Gamma^2$ with
$0<\beta_{\Gamma}(s)<1$, and let $h\ge8$ be even and $\bar n\ge1$.
There is $C=C(\Gamma,p,s)<\infty$, independent of $h,\bar n,\gamma$, and $n$,
such that
\begin{equation}
 \sup_{\substack{0\le\gamma\le\Gamma\\1\le n\le\bar n}}
 \max\left\{
 d_{\TV}\bigl(\Law(\Xi_{\gamma,n,h}^{\T}),\Law(\Xi_{\gamma,n}^{\T})\bigr),
 d_{\TV}(Q_{\gamma,n,h}^{\T},P_{\gamma,n}^{\T})
 \right\}
 \le 1\wedge C
 \left(\bar n^2h^{-\beta_{\Gamma}(s)}\right)^{p/(p+s)}.
 \label{eq:periodic-full-rate}
\end{equation}
If $0<\Gamma<\sqrt2$, there is also $C=C(\Gamma)<\infty$ such that
\begin{equation}
 \sup_{\substack{0\le\gamma\le\Gamma\\1\le n\le\bar n}}
 \max\left\{
 d_{\TV}\bigl(\Law(\Xi_{\gamma,n,h}^{\T}),\Law(\Xi_{\gamma,n}^{\T})\bigr),
 d_{\TV}(Q_{\gamma,n,h}^{\T},P_{\gamma,n}^{\T})
 \right\}
 \le1\wedge C\bar n^2\omega_\Gamma(h).
 \label{eq:periodic-second-moment-rate}
\end{equation}
Consequently, for every fixed $\Theta$ in \eqref{eq:parameter-domain},
\begin{equation}
 \Delta_h(\Theta)
 \le1\wedge C_{\Theta,p,s}
       h^{-\beta_{\Gamma}(s)p/(p+s)}
 \longrightarrow0
 \quad\text{as }h\to\infty\text{ through even integers}.
 \label{eq:main-periodic-tv}
\end{equation}
For $\Gamma=0$, both distances are identically zero.
\end{theorem}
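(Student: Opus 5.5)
The plan is to reduce both distances to a comparison of the annealed Cox processes. The graph law is obtained from the Cox process by a fixed Markov kernel: add independent edge marks with the same periodic kernel $k_n^{\T}$, then forget positions and labels. Total variation does not increase under a Markov kernel, so
\[
d_{\TV}(Q_{\gamma,n,h}^{\T},P_{\gamma,n}^{\T})\le d_{\TV}\bigl(\Law(\Xi_{\gamma,n,h}^{\T}),\Law(\Xi_{\gamma,n}^{\T})\bigr).
\]
It therefore suffices to bound the Cox distance. For $\Gamma=0$ both intensities are Lebesgue measure, so the distance vanishes.

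The comparison of the Cox laws goes through factorial moment (Janossy) densities. Conditional on the environment, the Poisson process with intensity $n\mu$ has law determined by $\mu$. For two Cox processes, the TV distance restricted to configurations with at most $k$ points is bounded by a sum over $j\le k$ of the $L^1$ distances between $j$-point Janossy densities. This gives the split
\[
d_{\TV}\le \sum_{j\le k}\frac{n^j}{j!}\int_{(\T^2)^j}\bigl|\EE\,e^{-nM_h}w_h^{\otimes j}-\EE\,e^{-nM}\mu^{\otimes j}\bigr|+\PP(N_h>k)+\PP(N>k).
\]

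\emph{Second-moment range $\Gamma<\sqrt2$.} I would follow the proof of \eqref{eq:main-tv} in Theorem~\ref{thm:transfer}. The key input is an identity, or an upper bound, for TV between Cox processes in terms of the second factorial moment measures. Since $\EE\mu(\dd x)=\dd x$ for both fields, the first moments agree, and the leading discrepancy is
\[
n^2\iint_{(\T^2)^2}\bigl|e^{\gamma^2K_h^{\T}(x,y)}-e^{\gamma^2K_{\T}(x-y)}\bigr|\dd x\dd y .
\]
I would then bound this integral directly from \eqref{eq:fft-weights}. The Gaussian cutoff $e^{-9|r|^2/h^2}$ and the pixelization $\pi_h$ give
\[
|K_h^{\T}(x,y)-K_{\T}(x-y)|\lesssim 1\wedge\frac{1}{h\,d_{\T}(x,y)}
\]
for $d_{\T}(x,y)\ge 1/h$, together with $K_h^{\T}\le\log h+C$ on the near-diagonal $d_{\T}\le 1/h$. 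Using the mean value theorem and $e^{\gamma^2K}\lesssim d^{-\gamma^2}$, integrate over the annuli $d\in[1/h,1]$ and the near-diagonal block. This gives
\[
\int_{1/h}^1 d^{1-\gamma^2}\,\frac{1}{hd}\dd d+h^{\gamma^2-2},
\]
which is exactly $\omega_\Gamma(h)$ in each of the three cases. Uniformity in $\gamma\le\Gamma$ and $n\le\bar n$ is immediate, since $n^2\le\bar n^2$ and the integrand is monotone in $\gamma$ up to constants.

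\emph{Full range $\Gamma<2$.} Here second moments diverge, so I would truncate. Pairs at distance below a scale $\delta$ are controlled by the probability that the Cox process has two points within distance $\delta$. That probability is at most $n^2\,\EE\iint_{d<\delta}\mu(\dd x)\mu(\dd y)$, bounded via fractional moments: by subadditivity with exponent $s<1$ applied over a $\delta$-grid, and the multifractal estimate $\EE\mu(Q_\delta)^{s'}\lesssim\delta^{\zeta(s')}$. The count is truncated using $\EE M^p<\infty$ for $p<4/\Gamma^2$, via Markov's inequality, $\PP(nM>k)\le (n/k)^p\,\EE M^p$. Away from the diagonal, the regularised second-moment comparison above applies. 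Balancing the truncation level $k$ and the scale $\delta$ produces a bound of the form $X^{p/(p+s)}$ with $X=\bar n^2h^{-\beta_\Gamma(s)}$. I expect the exponent $\beta(a,s)=(1-s)\bigl(2-\tfrac a2(2-s)\bigr)$ to come from interpolating (Hölder with exponent $s$) between the $L^1$ difference of the two intensity measures and their second-order singular mass.

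\emph{Main obstacle.} The hardest step is making this interpolation uniform in $\gamma\le\Gamma$, $h$, and $n$. This needs uniform fractional-moment bounds $\EE\bigl(\mu_{\gamma,h}^{\T}\bigr)(Q)^{q}$ for pixel cubes $Q$, holding at the resolution $h$ of the pixel field itself. I would first try Kahane convexity, comparing $K_h^{\T}$ with $K_{\T}$ plus a bounded constant, combined with exact scaling of $\log$-correlated fields. Finally, \eqref{eq:main-periodic-tv} follows from \eqref{eq:periodic-full-rate} with $\bar n=n_+$.
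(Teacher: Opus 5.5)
Your contraction step, the case $\Gamma=0$, and the range $\Gamma<\sqrt2$ match the paper. There you use the Gaussian-interpolation bound of Lemma~\ref{lem:regular-tv} on $\T^2$, with $|K_h^{\T}-K_{\T}|\lesssim(hd_{\T})^{-1}$ off the diagonal and radial integration. That bound comes from the log-Gaussian structure through \eqref{eq:cox-cancellation}; it is not a general property of Cox processes with equal first moments.

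The full range $\sqrt2\le\Gamma<2$, which is the real content of \eqref{eq:periodic-full-rate}, has a genuine gap.

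\emph{The close-pair truncation does not work.} Your bound $n^2\EE\iint_{d<\delta}\mu\otimes\mu$ on the close-pair probability is infinite there. That part is repairable by taking an $s$-th moment of $\min\{1,\cdot\}$. The real failure is that discarding close pairs does not localize the interpolation:
\begin{itemize}
\item In \eqref{eq:cox-cancellation} every pair carries weight $\EE_t[\lambda_t(x)\lambda_t(y)]=n^2e^{\gamma^2K_t(x,y)}$.
\item Take $H'=H\one\{\text{no two points within }\delta\}$ and $d_{\T}(x,y)<\delta$. Only the term $H'(\xi+\delta_x+\delta_y)$ of $\nabla^+_{x,y}H'$ vanishes; the other three survive.
\item So the near-diagonal contribution is still bounded only by $n^2\iint_{d_{\T}<\delta}|K_{2h}^{\T}-K_h^{\T}|e^{\gamma^2K_t}$. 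This is of order $n^2h^{\gamma^2-2}$ even between consecutive dyadic resolutions, and it is infinite against the continuum.
\item You also cannot interpolate only the off-diagonal part of the covariance, since $K_0+t(K_1-K_0)\one\{d_{\T}\ge\delta\}$ need not be a covariance.
\end{itemize}
Hence ``away from the diagonal the regularised second-moment comparison applies'' has no basis. The Janossy split you set up is never developed. Separated $j$-point Janossy densities involve all $j$-fold correlations and the void factor $e^{-nM}$, not a second-moment quantity.

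\emph{The missing idea.} You need a mechanism that makes the two-point weight integrable. The paper truncates the count, $H_L=H\one\{N\le L\}$, so that $|\nabla^+_{x,y}H_L|\le2\one\{N\le L\}$. The two-point Palm formula \eqref{eq:periodic-double-shift} then writes $\EE_t[W_t(x)W_t(y)\one\{N\le L\}]$ as $e^{aK_{h,t}(x,y)}$ times the probability that a Poisson variable with Girsanov-inflated mean $nM^{x,y}_{a,h,t}$ is at most $L$. By \eqref{eq:inserted-mass-lower-bound}, $M^{x,y}_{a,h,t}\gtrsim d_h(x,y)^{-2a}\mu_{a,h,t}(B)$. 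The Poisson lower tail \eqref{eq:periodic-poisson-lower-tail} and a negative moment of local chaos mass then turn $e^{aK}$ into the integrable weight $d_h^{-2+\beta(a,s)}$ of \eqref{eq:periodic-insertion-bound}, at the cost of a factor $n^{-s}(L+1)^s$. This is where $\beta(a,s)$ comes from. Balancing $n^{2-s}(L+1)^sh^{-\beta}$ against the count tail $(n/(L+1))^p$ is what produces the exponent $p/(p+s)$. Your H\"older heuristic and your balance of $k$ against $\delta$ derive neither.

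\emph{The continuum endpoint.} You compare $\Xi_{\gamma,n,h}^{\T}$ directly with the continuum process, which has no two-point intensity once $\gamma^2\ge2$, and you give no approximation step. The paper instead does the following:
\begin{itemize}
\item It compares resolutions $h$ and $2h$, whose covariances are constant on a common pixel partition, so the interpolation is exact and finite-dimensional.
\item It uses the weighted increment bound \eqref{eq:periodic-weighted-covariance}.
\item It sums the geometric series \eqref{eq:periodic-dyadic-telescope}.
\item It reaches the continuum through Lemmas~\ref{lem:fft-chaos-convergence} and \ref{lem:graph-tv-convergence} together with lower semicontinuity of total variation.
\end{itemize}

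The uniform fractional moments of pixel cubes that you flag as the main obstacle are a secondary input, obtained by Kahane comparison in Lemma~\ref{lem:periodic-palm-moments}. The step you actually need is the negative moment of the Palm-inserted mass.
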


For the range $\sqrt2\le\Gamma<2$, one convenient explicit choice is
\begin{equation}
 s_\Gamma=\frac{3\Gamma^2-4}{2\Gamma^2},\qquad
 p_\Gamma=\frac{\Gamma^2+4}{2\Gamma^2}.
 \label{eq:hard-range-choice}
\end{equation}
It gives
\begin{equation}
 \beta_\Gamma(s_\Gamma)=
 \frac{(4-\Gamma^2)^2}{8\Gamma^2},\qquad
 \kappa_\Gamma=\beta_\Gamma(s_\Gamma)
 \frac{p_\Gamma}{p_\Gamma+s_\Gamma}
 =\frac{(4-\Gamma^2)^2(\Gamma^2+4)}{32\Gamma^4}.
 \label{eq:hard-range-exponents}
\end{equation}
Thus the right side of \eqref{eq:periodic-full-rate} may be written as
$1\wedge C(\Gamma)(\bar n^2h^{-\beta_\Gamma(s_\Gamma)})^{p_\Gamma/(p_\Gamma+s_\Gamma)}$ in this range.

For each even $h\ge8$, run either procedure of Theorem~\ref{thm:calibration}
with a fresh calibration sample under $Q_{\cdot,h}^{\T}$.  According to the
procedure used, set
\begin{equation}
 F_{\theta,h}=
 \begin{cases}
  \{\exists j\ge0:\theta\notin\Theta_j\},&\text{fixed-statistic refinement},\\
  \{\nu(\theta)\notin C_\alpha^\rho(G)\},&\text{residual calibration}.
 \end{cases}
 \label{eq:periodic-failure-event}
\end{equation}
Let $C_h$ be the resulting $\nu$-projected confidence set, and let all
auxiliary randomness have law $P_h^{\rm aux}$.

\begin{corollary}
\label{cor:fft-continuum-coverage}
If $0<\Gamma<2$ and $p,s$ satisfy the conditions of
Theorem~\ref{thm:periodic-rate}, then
\begin{equation}
 \sup_{\theta\in\Theta}
 (P_\theta^{\T}\otimes P_h^{\rm aux})(F_{\theta,h})
 \le\alpha+\Delta_h(\Theta)
 \le\alpha+\bigl(1\wedge C_{\Theta,p,s}
                 h^{-\beta_{\Gamma}(s)p/(p+s)}\bigr),
 \label{eq:periodic-coverage-error}
\end{equation}
and
\begin{equation}
 \liminf_{\substack{h\to\infty\\ h\ {\rm even}}}
 \inf_{\theta\in\Theta}
 (P_\theta^{\T}\otimes P_h^{\rm aux})
 \{\nu(\theta)\in C_h\}\ge1-\alpha.
 \label{eq:periodic-asymptotic-coverage}
\end{equation}
For $\Gamma=0$, the same conclusions hold with $\Delta_h(\Theta)=0$.
\end{corollary}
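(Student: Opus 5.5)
The corollary follows by a change of measure from simulator calibration to the continuous target, applied one failure event at a time.

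Fix an even $h\ge8$ and $\theta\in\Theta$. In either procedure the failure event $F_{\theta,h}$ is a measurable function of the observed graph $G$ and of the auxiliary randomness (the calibration replicas and, for the residual procedure, the pilot $\mathcal D_0$). The observed graph enters only through its unlabelled finite-graph value: through $T(G)$ in the refinement procedure, and through $S(G)$ in the residual procedure. The auxiliary randomness has law $P_h^{\rm aux}$ and is independent of $G$.

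Write $F_{\theta,h}^{\omega}$ for the section of $F_{\theta,h}$ at a fixed auxiliary realization $\omega$. It is an event in graph space, so the definition of total variation gives
\[
 P_\theta^{\T}(F_{\theta,h}^{\omega})\le Q_{\theta,h}^{\T}(F_{\theta,h}^{\omega})+d_{\TV}(Q_{\theta,h}^{\T},P_\theta^{\T}).
\]
Integrating over $\omega$ with respect to $P_h^{\rm aux}$ by Fubini yields
\[
 (P_\theta^{\T}\otimes P_h^{\rm aux})(F_{\theta,h})\le(Q_{\theta,h}^{\T}\otimes P_h^{\rm aux})(F_{\theta,h})+\Delta_h(\Theta).
\]

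The first term on the right is at most $\alpha$ by Theorem~\ref{thm:calibration}. The calibration is built entirely from $Q^{\T}_{\cdot,h}$, and Section~\ref{subsec:calibration-main} states that the construction applies verbatim on the periodic domain with its periodic distance.
\begin{enumerate}
\item For fixed-statistic refinement, part (i), equation \eqref{eq:main-calibration}, gives the bound directly.
\item For the residual procedure, part (ii), equation \eqref{eq:main-residual-coverage}, gives the bound conditionally on $\mathcal D_0$ almost surely. Integrating over the law of $\mathcal D_0$, which is a component of $P_h^{\rm aux}$, gives the unconditional bound $\le\alpha$. One point needs checking here: $\nu(\theta)\notin C_\alpha^\rho(G)$ is contained in the event $p_B^\rho(G)\le\alpha$ for the unique cell $B\ni\theta$, and that inclusion is exactly what \eqref{eq:main-residual-coverage} encodes.
\end{enumerate}
Taking the supremum over $\theta$ proves the first inequality in \eqref{eq:periodic-coverage-error}. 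The second inequality is \eqref{eq:main-periodic-tv} of Theorem~\ref{thm:periodic-rate}.

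For \eqref{eq:periodic-asymptotic-coverage}, I would first identify $\{\nu(\theta)\notin C_h\}$ with $F_{\theta,h}$, or at least show it is contained in $F_{\theta,h}$. For the refinement procedure, $C_h$ is the $\nu$-projection of $\bigcap_j\Theta_j$. If $\theta$ survives every step, then $\nu(\theta)\in C_h$, so failure of coverage forces $\theta\notin\Theta_j$ for some $j$. For the residual procedure the identification holds by definition. Hence
\[
 \inf_{\theta}(P_\theta^{\T}\otimes P_h^{\rm aux})\{\nu(\theta)\in C_h\}\ge1-\alpha-\Delta_h(\Theta),
\]
and $\Delta_h(\Theta)\to0$ through even $h$ because $\beta_\Gamma(s)p/(p+s)>0$.

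For $\Gamma=0$, Theorem~\ref{thm:periodic-rate} gives $\Delta_h=0$, and the same argument goes through.

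The main obstacle is bookkeeping rather than analysis. The things to verify are:
\begin{itemize}
\item measurability of the sections $F_{\theta,h}^{\omega}$ as events in finite-graph space, which holds because the unlabelled finite-graph space is countable;
\item that the data-adaptive refinement rule does not break Fubini, since for fixed $\omega$ the rule is a fixed measurable function of $G$;
\item that the fresh calibration sample at each $h$ keeps the auxiliary randomness independent of the target graph.
\end{itemize}
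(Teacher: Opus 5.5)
Your proposal is correct and follows essentially the same route as the paper. That route is a total-variation change of measure on the observed graph with the independent auxiliary randomness integrated out, then the simulator-side bound $\alpha$ from Theorem~\ref{thm:calibration} and the rate \eqref{eq:main-periodic-tv}, and finally complements for the asymptotic coverage. One harmless imprecision: the cell $B\ni\theta$ need not be unique on shared boundaries, but non-coverage forces $p_B^\rho(G)\le\alpha$ for every such $B$, so the argument is unaffected.
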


\section{Preliminaries}
\label{sec:preliminaries}

\subsection{Gaussian multiplicative chaos}
\label{subsec:gmc-input}

We first record the GMC approximation theorem, using the notation of this
paper.  Let
$\mathcal H$ be a real separable Hilbert space carrying an isonormal Gaussian
process, let $(E,m)$ be a finite measure space, and let
$X_j,X:\mathcal H\to L^0(E,m)$ be continuous linear maps representing
generalized Gaussian fields.  Denote their covariance kernels by $K_j$ and $K$,
respectively, and let $\mu_j$ be the GMC over $X_j$ normalized by $\EE\mu_j=m$.

\begin{proposition}
\label{prop:gmc-approximation}
Suppose
\begin{equation}
 X_j\xi\longrightarrow X\xi
 \quad\text{in }L^0(E,m)\quad(\xi\in\mathcal H),
 \qquad
 K_j\longrightarrow K
 \quad\text{in }L^0(E\times E,m\otimes m),
 \label{eq:shamov-assumptions}
\end{equation}
and $(\mu_j(E))_{j\ge1}$ is uniformly integrable.  Then the GMC $\mu$ over $X$
exists and, for every $f\in L^1(E,m)$,
\begin{equation}
 \int_E f\,\dd\mu_j\longrightarrow\int_E f\,\dd\mu
 \quad\text{in }L^1.
 \label{eq:shamov-criterion}
\end{equation}
\end{proposition}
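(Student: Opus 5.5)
This proposition is Shamov's approximation theorem for GMC in the paper's notation. The plan is to follow Shamov's characterization of GMC as a randomized shift. Recall that a random measure $\mu$ is a GMC over $X$ with $\EE\mu=m$ if and only if, for every $\xi\in\mathcal H$, the rooted (Peyrière) measure satisfies
\[
\EE\!\left[\int_E F(x,W)\,\mu(\dd x)\right]=\int_E\EE F\bigl(x,W+K(x,\cdot)\bigr)\,m(\dd x),
\]
where $W$ denotes the underlying isonormal process. Equivalently, one can use the Cameron--Martin shift formulation: under $\mu(\dd x)\,\dd\PP$, the field seen from the root $x$ is the original field shifted by $K(x,\cdot)$.

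\textbf{Step 1 (tightness and subsequential limits).} The hypothesis that $(\mu_j(E))_j$ is uniformly integrable, together with $\EE\mu_j=m$, makes the family $\{\int f\,\dd\mu_j\}$ uniformly integrable for every bounded $f$. It also makes the pairs $(W,\mu_j)$ tight in a suitable topology, namely the weak topology on $L^1(\Omega\times E)$ obtained by viewing $\mu_j$ as densities with respect to $\PP\otimes m$. By Dunford--Pettis, after extracting a subsequence, $\mu_j\to\mu$ weakly in $L^1$, in the sense that $\EE[Z\int f\,\dd\mu_j]\to\EE[Z\int f\,\dd\mu]$ for bounded $Z$ and $f$. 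The limit has intensity $m$, since uniform integrability prevents loss of mass.

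\textbf{Step 2 (identify the limit).} Pass to the limit in the rooted-measure identity using test functionals of the form $F(x,W)=g(x)\exp(i\,W\xi)$. Under $\mu_j$ the shift is by $X_j\xi(x)$, and by hypothesis $X_j\xi\to X\xi$ in $L^0(m)$. The convergence $K_j\to K$ in $L^0(m\otimes m)$ is what controls the joint characteristic functionals of the shifted field at two independent roots. This identifies $\mu$ as a GMC over $X$, and Shamov's uniqueness then gives that the limit does not depend on the subsequence.

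\textbf{Step 3 (upgrade weak to strong $L^1$).} This is the main obstacle. The strategy is the standard $L^2$-type computation from Shamov's paper. First, $\EE\bigl[\int f\,\dd\mu_j\,\big|\,W\bigr]$ converges to $\int f\,\dd\mu$: the measure $\mu$ is $W$-measurable, so the weak limits of conditional expectations coincide with $\mu$. Second, one shows $\EE\bigl|\int f\,\dd\mu_j-\int f\,\dd\mu\bigr|\to0$ by exploiting that $\mu_j$ is itself measurable with respect to $W$ and that the cross terms $\EE\bigl[\int f\,\dd\mu_j\int f\,\dd\mu\bigr]$ are computed by rooting at $x$. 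Concretely, for truncated $f$ one first handles bounded exponential moments via $e^{K_j\wedge M}$, and then removes the truncation using uniform integrability. Finally, extension from bounded $f$ to all $f\in L^1(m)$ follows by density, since $\EE\int|f-g|\,\dd\mu_j=\norm{f-g}_{L^1(m)}$ uniformly in $j$. Since this proposition is exactly Shamov's theorem, the paper will most likely cite it directly, and the plan above mirrors that reference.
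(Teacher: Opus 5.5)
The paper does not prove this proposition; it quotes it from Shamov's Theorem~25. Your closing sentence is therefore exactly the paper's route, and citing the theorem is a complete proof. Read as a self-contained argument, however, your sketch has a genuine gap, in the step you flag.

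It helps to see where each hypothesis acts. Testing against $g(x)e^{iW\xi}$ needs only $X_j\xi\to X\xi$ in $m$-measure plus bounded convergence. The weak limit is $\sigma(W)$-measurable because $L^1(\Omega,\sigma(W),\PP)$ is weakly closed, and you need this before invoking uniqueness. So Steps~1--2 identify the weak limit as the GMC over $X$ without using $K_j\to K$ at all.

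In Step~1 you also cannot treat $\mu_j$ as densities on $\Omega\times E$. The limit is typically singular with respect to $m$, so the approximating densities are not uniformly integrable in $L^1(\PP\otimes m)$. What works is weak compactness of each $\int f\,\dd\mu_j$ in $L^1(\PP)$ plus a diagonal argument.

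The covariance hypothesis is needed only to upgrade weak to strong convergence, and there it is indispensable. Suppose $\mathcal H=\mathcal H_1\oplus\mathcal H_2$, $X$ vanishes on $\mathcal H_2$, and $(e_j)$ is orthonormal in $\mathcal H_2$. Then $X_j\xi=X\xi+\langle\xi,e_j\rangle$ satisfies $X_j\xi\to X\xi$, and its GMC is $\mu_j=e^{We_j-1/2}\mu$. The masses $\mu_j(E)$ are identically distributed, hence uniformly integrable, and $\mu_j\to\mu$ weakly. Yet $\EE|\mu_j(E)-\mu(E)|=m(E)\,\EE|e^{G-1/2}-1|>0$ for every $j$, with $G$ standard normal. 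Here $K_j=K+1$.

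Step~3 never supplies the mechanism that excludes this behavior.
\begin{enumerate}
\item Its first claim is vacuous. Each $\mu_j$ is already $W$-measurable, so $\EE[\int f\,\dd\mu_j\mid W]=\int f\,\dd\mu_j$, and you have only restated weak convergence.
\item The $L^2$ cross-term computation requires $\iint e^{K}\,\dd(m\otimes m)<\infty$. This fails throughout the $L^1$-but-not-$L^2$ phase: for the planar fields of this paper it fails once $\gamma^2\ge2$, while Lemma~\ref{lem:fft-chaos-convergence} applies the proposition for every $\Gamma<2$.
\item The cross term also involves the cross-covariance of $X_j$ with $X$, which is not among the hypotheses.
\item Replacing $e^{K_j}$ by $e^{K_j\wedge M}$ changes a formula, not a random measure that can be compared with $\mu_j$ and $\mu$. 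Uniform integrability of the total masses gives no control of second moments, so the truncation cannot be removed as you propose.
\end{enumerate}

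What is missing is an argument that turns $K_j\to K$ into strong convergence. One route is to show that every joint subsequential limit in law of $(W,\int f\,\dd\mu_j)$ equals $(W,\int f\,\dd\mu)$. This is where the covariance must enter, through the factor $e^{K_j(x,\cdot)}$ by which rooting at $x$ reweights $\mu_j$. Because that limit is a function of $W$, convergence in probability follows, and uniform integrability then gives $L^1$. Supplying this step is the substance of Shamov's theorem, so either carry it out or cite the theorem as the paper does. Your final density extension from bounded $f$ to $f\in L^1(m)$ is fine.
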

\smallskip
\noindent\emph{Source.} \cite[Theorem~25]{Shamov}.

The exact-scale logarithmic reference construction and the covariance
comparison used below are \cite[Theorems~27--28]{Shamov}.  In particular,
Theorem~28 applies to pointwise ordered covariance kernels; it does not require
their difference to be a covariance kernel.  We also use Kahane's convexity
inequality in the form stated in \cite[Theorem~2.1]{RhodesVargas}.

We also record the moment statements needed later, specialized to dimension
two.

\begin{proposition}
\label{prop:gmc-moments}
Let $0<\gamma<2$.  For every compact set $A\subset D$ and every $q$ with
$0<q<4/\gamma^2$,
\begin{equation}
 \EE\bigl[\mu_\gamma(A)^q\bigr]<\infty.
 \label{eq:gmc-positive-moments}
\end{equation}
For every nonempty compact Euclidean ball $B\subset D$ and every $q>0$,
\begin{equation}
 \EE\bigl[\mu_\gamma(B)^{-q}\bigr]<\infty.
 \label{eq:gmc-moments}
\end{equation}
\end{proposition}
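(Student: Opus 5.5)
Proposition~\ref{prop:gmc-moments} is standard, so the plan is to reduce both bounds to an exact-scale reference field and then transfer them through Kahane's convexity inequality \cite[Theorem~2.1]{RhodesVargas}. By \eqref{eq:covariance}, $K(x,y)=\log(1/|x-y|)+g_0(x,y)$ with $|g_0|\le B_0$. On a compact set $A\subset D$ (of diameter at most $\sqrt2$) compare $K$ with an exact-scale kernel $K_0(x,y)=\log_+(T/|x-y|)$ from \cite[Theorem~27]{Shamov}, choosing $T$ so that $K_0$ is a positive definite star-scale-invariant kernel on a neighbourhood of $\overline D$. Then $K_0-B'\le K\le K_0+B'$ for some constant $B'$. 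Adding an independent centred Gaussian variable $Z$ of variance $B'$ to the smaller field gives ordered kernels $K\le K_0+B'$ and $K_0\le K+B'$. The resulting measures differ only by the factor $e^{\gamma Z-\gamma^2B'/2}$, whose moments of every real order are finite. Hence, by Kahane (applied to convex $x\mapsto x^q$ for $q\ge1$, concave for $0<q<1$, and convex $x\mapsto x^{-q}$), positive and negative moments for $\mu_\gamma$ and for the reference chaos are comparable up to constants. The ordered-kernel comparison of \cite[Theorem~28]{Shamov} justifies this for generalized fields.

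For positive moments with $q<4/\gamma^2$, I would use the exact scaling of the reference chaos. Cover $A$ by four subsquares of side $1/2$ (rescaled appropriately). Exact scale invariance gives $\mu(\lambda Q)\overset{d}{=}\lambda^{2}e^{\gamma\Omega_\lambda-\gamma^2\EE\Omega_\lambda^2/2}\mu'(Q)$ with $\Omega_\lambda$ Gaussian of variance $\log(1/\lambda)$, independent of $\mu'$. Trivially $\EE\mu(A)^q<\infty$ for $q\le1$. For $1<q<4/\gamma^2$, I would use the standard induction on integer parts of $q$. Expand $(\sum_i\mu(Q_i))^q\le\sum_i\mu(Q_i)^q+$ cross terms, and bound the cross terms by Hölder using lower moments together with the separation of non-adjacent squares, handling adjacent squares by a further dyadic split. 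The self-similar term contributes $4\cdot 2^{-2q}\,2^{(\gamma^2q^2-\gamma^2 q)/2}\EE\mu(A)^q=2^{\zeta(q)-2q+2}\ldots$; this factor is $<1$ exactly when $\zeta(q)=(2+\gamma^2/2)q-\gamma^2q^2/2>2$, i.e.\ $q<4/\gamma^2$. This allows absorbing it into the left side, after first working with the truncated regularization $\mu_\varepsilon$ to guarantee finiteness and then passing to the limit by Fatou. This absorption step is the main obstacle: the cross terms must be controlled only by moments of order $<q$, which is why I would run the induction in the order $q\in(1,2],(2,3],\dots$.

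For negative moments on a ball $B$, I would insert a small square $Q\subset B$, so that $\mu(B)\ge\mu(Q)$, and reduce to the reference chaos as above. Scaling gives $\mu(Q)\ge\max_i\mu(Q_i)\ge$ the mass of one subsquare, and the subsquares at distance at least one side apart are, for the exact-scale field, conditionally independent given the common coarse Gaussian factor. This yields a recursive tail bound $\PP(\mu(Q)\le t)\le \PP(e^{\gamma\Omega-\ldots}\ge\cdot)+\PP(\mu(Q)\le t^{a})^{k}$ with $k\ge2$ independent copies. Iterating gives $\PP(\mu(Q)\le t)\le C_q t^{q}$ for all $q$, provided $\PP(\mu(Q)=0)=0$, which follows from $\EE\mu(Q)=|Q|>0$ together with the zero–one law for the exact-scale chaos. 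Integrating the tail then gives \eqref{eq:gmc-moments}.
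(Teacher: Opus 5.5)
The paper gives no argument of its own here: it imports both bounds from \cite[Theorems~2.11--2.12]{RhodesVargas}. Your outline is the standard proof behind those theorems: Kahane comparison with an exact-scale kernel, self-similar absorption for $q>1$, and a recursive small-ball bound for negative moments. It is therefore a self-contained reconstruction of the cited route rather than a different one, and your reduction step is fine.

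The step that does not go through as written is the absorption. For adjacent squares the cross terms are of full order $q$, since H\"older only returns $\EE\mu(Q_i)^q$ again. So the single-level factor $4\cdot2^{-2q}2^{\gamma^2(q^2-q)/2}=2^{2-\zeta(q)}$ (not $2^{\zeta(q)-2q+2}$) is not what you can absorb, and a further dyadic split does not remove adjacency. The standard fix is to go deep rather than split adjacent pairs:
\begin{enumerate}
\item Cut into squares of side $\ell$ and group them into $m$ colour classes (e.g.\ $m=9$, residues mod $3$) whose members are separated by at least their diameter. Then use $(\sum_f S_f)^q\le m^{q-1}\sum_f S_f^q$.
\item Inside one class, Kahane lets you raise every cross covariance to the constant $c=\log(T/d_{\min})$. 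The diagonal remainder $\log(d_{\min}/|x-y|)$ is again an exact-scale kernel, so the class mass is dominated in convex order by $e^{\gamma\Omega-\gamma^2c/2}\sum_I\widetilde M_I$, with $\Var\Omega=c$ and the $\widetilde M_I$ independent.
\item The $q$-th moment of an independent sum has cross terms involving only moments of order at most $q-1$. The diagonal contributes $C\,m^{q-1}\ell^{\zeta(q)-2}\EE\mu(Q)^q$, and this factor can be made smaller than one by taking $\ell$ small precisely because $\zeta(q)>2$, i.e.\ $1<q<4/\gamma^2$.
\end{enumerate}

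A smaller point concerns the negative moments. In dimension two, $\log_+(T/|x|)$ is positive definite and exactly scale invariant. It is not star-scale invariant with a positive definite profile, however: the would-be profile $-x\cdot\nabla K_0=\one\{|x|<T\}$ has a sign-changing Fourier transform. So separated subsquares are not literally conditionally independent given one coarse Gaussian factor. The same one-variable Kahane device supplies what you need. Since $\mu(Q)\ge\mu(Q_1\cup Q_2)$ and $x\mapsto e^{-sx}$ is convex and decreasing, you may raise the cross covariance between two side-$\lambda$ subsquares at distance $\lambda T$ to $\log(1/\lambda)$. With $\phi(s)=\EE e^{-s\mu(Q)}$ and $\Var\Omega=\log(1/\lambda)$, this gives exactly
\[
\phi(s)\le\EE\bigl[\phi\bigl(\lambda^2e^{\gamma\Omega-\gamma^2\log(1/\lambda)/2}s\bigr)^2\bigr].
\]
Letting $s\to\infty$ gives $\PP(\mu(Q)=0)\le\PP(\mu(Q)=0)^2$, which is your zero--one step. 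Iterating the recursion gives the polynomial small-ball bounds.
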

\smallskip
\noindent\emph{Source.} \cite[Theorems~2.11--2.12]{RhodesVargas}.

For the multifractal statements, we use the notation of
\cite[Definition~2.9]{Bertacco}, specialized to $q=2$:
\begin{equation}
 \operatorname{Pack}_2(\mu,r)=\sup\left\{\sum_i\mu(B(x_i,r))^2:
          B(x_i,r)\text{ are pairwise disjoint},\ x_i\in D\right\}.
 \label{eq:packing-sum}
\end{equation}
For $0<\gamma<2$, set
\begin{equation}
 F_\alpha=\left\{x\in D:
       \lim_{r\downarrow0}\frac{\log\mu_\gamma(B(x,r))}{\log r}=\alpha\right\},
 \label{eq:local-dimension-set}
\end{equation}
and define
\begin{align}
 \tau_\gamma&=
  \begin{cases}
   2-\gamma^2,&0\le\gamma\le1,\\
   (2-\gamma)^2,&1<\gamma<2,
  \end{cases}
 \label{eq:correlation-exponent}\\
 \shortintertext{and}
 f_\gamma(\alpha)&=2-\frac12\left(
       \frac{2-\alpha}{\gamma}+\frac\gamma2\right)^2.
 \label{eq:singularity-spectrum}
\end{align}
The next statements are the corresponding dimension-two multifractal results.
Set
\begin{equation}
 I_\gamma=
 \left[\left(\sqrt2-\frac{\gamma}{\sqrt2}\right)^2,
       \left(\sqrt2+\frac{\gamma}{\sqrt2}\right)^2\right].
 \label{eq:multifractal-range}
\end{equation}

\begin{proposition}
\label{prop:multifractal}
For each fixed $0<\gamma<2$, almost surely,
\begin{equation}
 \limsup_{r\downarrow0}
 \frac{\log \operatorname{Pack}_2(\mu_\gamma,r)}{-\log r}=-\tau_\gamma,
 \label{eq:packing-exponent}
\end{equation}
and
\begin{equation}
 \dim_H F_\alpha=
 \begin{cases}
  f_\gamma(\alpha),&\alpha\in I_\gamma,\\
  0,&\alpha\notin I_\gamma.
 \end{cases}
 \label{eq:bertacco-spectrum}
\end{equation}
Moreover, for $\mu_\gamma$-almost every $x$,
\begin{equation}
 \lim_{r\downarrow0}\frac{\log\mu_\gamma(B(x,r))}{\log r}
 =2-\frac{\gamma^2}{2}=2-\nu.
 \label{eq:exact-dimension}
\end{equation}
\end{proposition}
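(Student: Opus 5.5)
The plan is not to reprove the multifractal analysis of GMC. Instead I would reduce each of the three claims to the corresponding dimension-two statements in \cite{Bertacco}, and check that the normalizations and the exponent formulas agree with \eqref{eq:correlation-exponent}--\eqref{eq:exact-dimension}.

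\textbf{Step 1: admissibility of the field.} The cited results are stated for log-correlated fields with a bounded continuous remainder, on a neighborhood of the domain, with full-variance normalization. These are exactly the standing assumptions on $K$ in \eqref{eq:covariance}. Because $X$ extends to a neighborhood of $\overline D$, balls $B(x,r)$ centered near $\partial D$ cause no boundary issue. If the source is stated for an exact-scale reference field instead, I would pass to our field using the covariance comparison \cite[Theorem~28]{Shamov} together with Kahane's inequality \cite[Theorem~2.1]{RhodesVargas}. Since $\norm{g_0}_\infty\le B_0$, this comparison changes moments of ball masses only by multiplicative constants. Multiplicative constants do not affect any of the exponents below, which are all limits of $\log(\cdot)/\log r$.

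\textbf{Step 2: the packing exponent.} I would specialize the $L^q$-spectrum of \cite{Bertacco} to $d=2$ and $q=2$. For $q\le 2/\gamma$ the spectrum is $\xi(q)-2$, where $\xi(q)=(2+\gamma^2/2)q-\gamma^2q^2/2$; beyond $q=2/\gamma$ it continues linearly as $q(\sqrt2-\gamma/\sqrt2)^2$.
\begin{itemize}[leftmargin=2.2em]
\item \emph{Case $\gamma\le1$.} Here $q=2$ lies in the first regime, and $\xi(2)-2=2-\gamma^2$.
\item \emph{Case $1<\gamma<2$.} Here $q=2$ lies in the linear regime, and $2(\sqrt2-\gamma/\sqrt2)^2=(2-\gamma)^2$.
\end{itemize}
In both cases this recovers $\tau_\gamma$. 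Because $\operatorname{Pack}_2(\mu_\gamma,r)\approx r^{\tau_\gamma}$, the normalized logarithm tends to $-\tau_\gamma$, which gives \eqref{eq:packing-exponent}. The $\limsup$ form matches the packing-sum definition \eqref{eq:packing-sum}.

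\textbf{Step 3: the singularity spectrum.} Statement \eqref{eq:bertacco-spectrum} is the Legendre transform of $\xi$ restricted to the range $I_\gamma$. I would verify that transform directly. The function $f_\gamma$ vanishes exactly at the endpoints $(\sqrt2\mp\gamma/\sqrt2)^2$ of $I_\gamma$, which confirms the stated range.

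\textbf{Step 4: exact dimension, and the main obstacle.} For \eqref{eq:exact-dimension} I would argue independently through the rooted (Peyrière) measure $\mu_\gamma(\dd x)\,\PP(\dd\omega)$. Under this measure the field seen from the root acquires the drift $\gamma K(x,\cdot)\approx\gamma\log(1/|x-\cdot|)$. Consequently $\mu_\gamma(B(x,r))$ behaves like $r^{2+\nu}\cdot r^{-\gamma^2}=r^{2-\nu}$, times a random factor whose positive and negative moments are controlled at every scale by Proposition~\ref{prop:gmc-moments} applied to the rescaled chaos.
\begin{itemize}[leftmargin=2.2em]
\item Along dyadic radii $r=2^{-k}$, Chebyshev's inequality and Borel--Cantelli give $\mu_\gamma(B(x,2^{-k}))=2^{-k(2-\nu)+o(k)}$ almost surely for the rooted measure.
\item Monotonicity of $r\mapsto\mu_\gamma(B(x,r))$ interpolates between consecutive dyadic radii.
\item Finally, disintegrating the rooted measure gives the statement for $\mu_\gamma$-almost every $x$.
\end{itemize}
I expect this uniform scale-by-scale moment control under the rooted measure to be the only delicate step. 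Alternatively, it is the value $\alpha=2-\nu$ at which $f_\gamma(\alpha)=\alpha$, which is consistent with Step 3.
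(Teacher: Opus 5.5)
For \eqref{eq:packing-exponent} and \eqref{eq:bertacco-spectrum} your approach is the paper's. Its proof is only the citation \cite[Theorem~3.1 and Proposition~3.3]{Bertacco}, and your consistency checks are correct: $\xi(2)-2=2-\gamma^2$ for $\gamma\le1$, $2(\sqrt2-\gamma/\sqrt2)^2=(2-\gamma)^2$ in the linear regime, and $f_\gamma(\alpha)=\inf_q\{q\alpha-\xi(q)+2\}$ vanishes exactly at the endpoints of $I_\gamma$. Note that this is the Legendre transform of $\xi-2$, not of $\xi$. Do not upgrade \eqref{eq:packing-exponent} to a limit. The cited statement is a $\limsup$, and Lemma~\ref{lem:correlation-limit} uses only the resulting bound $\operatorname{Pack}_2(\mu_\gamma,r)\le r^{\tau_\gamma-\epsilon}$ for small $r$.

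For \eqref{eq:exact-dimension} you take a genuinely different, self-contained route, whereas the paper cites Bertacco again. Your rooted-measure argument works, and its inputs are already in the paper without circularity. The cell comparison in the proof of Lemma~\ref{lem:kernel-moments}, with $a_r(x)$ replaced by $\mu_\gamma(B(x,r))$, combined with Lemma~\ref{lem:uniform-mass-moments}, gives $\EE\int\mu_\gamma(B(x,r))^{\pm p}\mu_\gamma(\dd x)\le Cr^{\pm(2-\nu)p-\nu p^2}$ for small $p>0$. Markov's inequality then bounds the rooted measure of $\{\mu_\gamma(B(x,r))>r^{2-\nu-\epsilon}\}$ and of $\{\mu_\gamma(B(x,r))<r^{2-\nu+\epsilon}\}$ by $Cr^{p\epsilon-\nu p^2}$, which is summable along $r=2^{-k}$ once $p<\epsilon/\nu$.

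Correct your description of the random factor, however. Its moments are \emph{not} controlled uniformly in scale, because it carries the coarse-field lognormal of variance of order $\log(1/r)$, so its $p$-th moments grow like $r^{-\nu p^2}$. Proposition~\ref{prop:gmc-moments}, which gives finiteness for a fixed set, cannot supply this scale dependence. A variance-based Chebyshev bound on $\log\mu_\gamma(B(x,2^{-k}))$ would also fail, since it yields only a non-summable $O(1/k)$; you need the small fractional moments above. What this route buys is independence from the multifractal formalism for the one property that Lemma~\ref{lem:typical-degree} uses. The observation $f_\gamma(2-\nu)=2-\nu$ is only a consistency check, not a proof.

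Your fallback in Step~1 would not work as stated. Kahane's inequality and \cite[Theorem~28]{Shamov} compare expectations of convex functionals of masses of fixed sets. They transfer moment inputs, not the almost-sure conclusions of a theorem proved for a different field. Moment bounds do give the upper halves of the three claims through Markov, covering and Borel--Cantelli arguments:
\begin{itemize}
\item the packing upper bound;
\item $\dim_HF_\alpha\le f_\gamma(\alpha)$;
\item exact dimensionality.
\end{itemize}
The lower bounds are different. Both $\dim_HF_\alpha\ge f_\gamma(\alpha)$, which Lemma~\ref{lem:correlation-limit} uses, and $\operatorname{Pack}_2(\mu_\gamma,r)\ge r^{\tau_\gamma+\epsilon}$ along small $r$ require constructing mass on thick points, and that construction would have to be redone for the given $K$. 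So the citation route rests on \cite{Bertacco} applying directly to \eqref{eq:covariance}, as the paper implicitly assumes.
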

\smallskip
\noindent\emph{Source.} \cite[Theorem~3.1 and Proposition~3.3]{Bertacco}.

\subsection{Poisson and Monte Carlo identities}
\label{subsec:poisson-ranks}

For a Poisson process $\Xi$ with finite intensity measure $\lambda$, Mecke's
identity is
\begin{equation}
 \EE\sum_{x\in\Xi}F(x,\Xi-\delta_x)
 =\int \EE F(x,\Xi)\,\lambda(\dd x),
 \qquad F\ge0.
 \label{eq:mecke}
\end{equation}
We use its multivariate form for edge-marked Poisson processes as in
\cite[Section~3]{LastNestmannSchulte}.  Poisson intensity perturbations and
add-one difference operators are treated in \cite{MolchanovZuyev,LastPerturbation}.

For Monte Carlo inversion, we use the exchangeable-rank construction with
ties and its conservative weak-tail form.

\begin{proposition}
\label{prop:rank}
If $T_0,T_1,\ldots,T_R$ are exchangeable real random variables, then
\begin{equation}
 q^+=\frac{1+\sum_{r=1}^R\one\{T_r\ge T_0\}}{R+1},\qquad
 q^-=\frac{1+\sum_{r=1}^R\one\{T_r\le T_0\}}{R+1}
 \label{eq:exact-ranks}
\end{equation}
satisfy
\begin{equation}
 \Pr(q^\pm\le u)\le u,
 \qquad 0\le u\le1.
 \label{eq:rank-validity}
\end{equation}
\end{proposition}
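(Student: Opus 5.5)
The plan is to prove the statement for $q^+$; the case of $q^-$ follows by applying the same argument to $-T_0,\ldots,-T_R$, which are again exchangeable. Set $n=R+1$ and $u\in[0,1]$, and define for each index $i\in\{0,\ldots,R\}$ the count
\[
 c_i=\#\{k\in\{0,\ldots,R\}: T_k\ge T_i\},
\]
which includes $k=i$. With this notation $q^+=c_0/(R+1)$, and exchangeability gives $\Pr(c_0\le m)=\Pr(c_i\le m)$ for every $i$. Averaging over $i$ therefore yields
\[
 \Pr(c_0\le m)=\frac{1}{R+1}\,\EE\#\{i:c_i\le m\},
\]
and the proof reduces to a deterministic counting statement.

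The central step is to show that for any real numbers $t_0,\ldots,t_R$ and any integer $m\ge0$ one has $\#\{i: c_i\le m\}\le m$. Let $J=\{i:c_i\le m\}$ and assume $J$ is nonempty. Choose $i^*\in J$ with $t_{i^*}$ minimal among the indices of $J$. Every $j\in J$ then satisfies $t_j\ge t_{i^*}$, so $j$ is counted in $c_{i^*}$. Hence $|J|\le c_{i^*}\le m$. The role of ties is what makes this work: tied values all count each other, so ties can only enlarge the counts $c_i$ and never help violate the bound.

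To conclude, take $m=\lfloor u(R+1)\rfloor$. Then $\Pr(q^+\le u)=\Pr(c_0\le m)\le m/(R+1)\le u$, which is the claimed bound for $q^+$. I expect no real obstacle here. The only delicate point is the treatment of ties, and the minimal-element argument above handles it without any randomized tie-breaking.
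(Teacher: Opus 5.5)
Your proof is correct, and it takes a different route from the one the paper relies on.

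The paper gives no argument of its own and defers to Dufour's Section~2.2. The standard treatment there breaks ties by attaching independent uniform marks $U_0,\ldots,U_R$ and ranking the pairs $(T_i,U_i)$ lexicographically. The augmented vector is still exchangeable and almost surely tie-free, so the randomized rank of $T_0$ is exactly uniform on $\{1,\ldots,R+1\}$. The non-randomized $q^+$ dominates the randomized p-value, because it counts every tie $T_r=T_0$ against rejection, so conservativeness follows by inclusion of events.

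You instead replace the permutation structure of tie-free ranks by the deterministic count $\#\{i:c_i\le m\}\le m$. You prove it by the minimal-element argument and combine it with averaging over the exchangeable indices.

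What your route buys:
\begin{itemize}
\item It needs no auxiliary randomization.
\item It handles ties in one line.
\item It makes explicit that only exchangeability and this counting inequality are used.
\item It gives the bound $\Pr(q^+\le u)\le\lfloor u(R+1)\rfloor/(R+1)$ directly.
\end{itemize}

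What the randomized route buys is an exact-size randomized test alongside the conservative one, within the standard Monte Carlo testing framework the paper cites. For the inequality as stated, your self-contained argument is sufficient.
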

\smallskip
\noindent\emph{Source.} \cite[Section~2.2, especially (2.32)--(2.33) and
Proposition~2.4]{Dufour}; see also \cite[Section~3, (7)]{GlazerStark}.

\section{Single-graph statistics}
\label{sec:statistics}

\subsection{Conditional edge fluctuations}
\label{subsec:edges}

Conditioning on a deterministic environment isolates the graph randomness.  We
first express the conditional edge mean and variance through $C_r$ and
$C_r^{(2)}$.  Fix a nonzero finite non-atomic measure $\mu$ on $D$, write
$M=\mu(D)$, and let $\Xi_n$ be the Poisson vertex process with intensity
$n\mu$.  For $r>0$, write
\begin{equation}
 \begin{aligned}
  k_r(x,y)&=e^{-|x-y|^2/r^2},&
  a_r(x)&=\int_Dk_r(x,y)\mu(\dd y),\\
  C_r&=\int_Da_r(x)\mu(\dd x),&
  C_r^{(2)}&=\int_Da_r(x)^2\mu(\dd x).
 \end{aligned}
 \label{eq:conditional-integrals}
\end{equation}

\begin{lemma}
\label{lem:edge-fluctuations}
For $r_n=cn^{-1/2}$,
\begin{align}
 e_n(\mu):=\EE(E_n\mid\mu)&=\tfrac12n^2C_{r_n},
 \label{eq:edge-mean}\\
 \shortintertext{\textit{and}}
 \Var(E_n\mid\mu)&=\tfrac12n^2C_{r_n}+n^3C_{r_n}^{(2)}.
 \label{eq:edge-variance}
\end{align}
There is a constant $C_c<\infty$, independent of $\mu$ and $n$, such that
\begin{equation}
 \EE\left[\left(\frac{E_n}{e_n(\mu)}-1\right)^2\middle|\mu\right]
 \le C_c\left(\frac1{nM^2}+\frac1{\sqrt n\,M}\right).
 \label{eq:conditional-concentration}
\end{equation}
\end{lemma}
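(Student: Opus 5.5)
The plan is to obtain the mean and variance from Mecke's identity \eqref{eq:mecke} conditionally on $\mu$. Then I reduce \eqref{eq:conditional-concentration} to two deterministic inequalities for $C_r$ and $C_r^{(2)}$, proved with a square grid at scale $r$.

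\emph{Mean and variance.} Write $E_n=\frac12\sum_{x\ne y\in\Xi_n}\one\{U_{xy}\le k_{r_n}(x,y)\}$ with independent uniform edge marks. The bivariate Mecke formula gives $\EE(E_n\mid\mu)=\frac12 n^2\iint k_{r_n}\dd\mu\dd\mu=\frac12 n^2C_{r_n}$. For $E_n^2$, expand the double sum over pairs of unordered pairs and sort the terms by the number of shared vertices. The terms fall into three groups:
\begin{enumerate}
\item Identical pairs contribute $\frac12 n^2C_{r_n}$, since $\one^2=\one$.
\item Pairs sharing exactly one vertex contribute $n^3\int a_{r_n}(x)^2\mu(\dd x)=n^3C^{(2)}_{r_n}$, after counting the ordered configurations; the two marks are independent.
\item Disjoint pairs contribute, by the four-point Mecke formula, exactly $(\frac12 n^2C_{r_n})^2$.
\end{enumerate}
Subtracting the squared mean leaves \eqref{eq:edge-variance}. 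Hence
\[
 \EE\Bigl[\Bigl(\tfrac{E_n}{e_n}-1\Bigr)^2\Bigm|\mu\Bigr]
 =\frac{2}{n^2C_{r_n}}+\frac{4C^{(2)}_{r_n}}{nC_{r_n}^2}.
\]

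\emph{Two geometric inequalities.} Tile $D$ by at most $\lceil 1/r\rceil^2$ squares $Q$ of side at most $r$, so that $k_r\ge e^{-2}$ on each $Q\times Q$. I aim to prove
\begin{enumerate}
\item $C_r\ge e^{-2}\sum_Q\mu(Q)^2$;
\item $\sup_x a_r(x)\le K\max_Q\mu(Q)$ with $K$ absolute.
\end{enumerate}
For the second, write $a_r(x)\le\sum_{Q'}\mu(Q')\sup_{y\in Q'}e^{-|x-y|^2/r^2}$. Grid squares at lattice distance $j$ from the square containing $x$ number $O(j)$ and carry weight at most $e^{-(j-1)^2}$, so the weights sum to an absolute constant. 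From the first inequality and Cauchy--Schwarz over the squares,
\[
 C_r\ge e^{-2}\,\frac{M^2}{\lceil1/r\rceil^2}\ge c_1M^2r^2
\]
for $r\le1$. The case $r>1$ is trivial, since then $k_r\ge e^{-2}$ on $D^2$ and $C_r\ge e^{-2}M^2$. Also $\max_Q\mu(Q)^2\le e^2C_r$. Combining this with the second inequality gives $\sup a_r\le K e\sqrt{C_r}$, and therefore $C_r^{(2)}\le \sup a_r\cdot C_r\le K' C_r^{3/2}$.

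\emph{Conclusion.} Insert these bounds with $r=r_n=cn^{-1/2}$, so that $C_{r_n}\ge c_1c^2M^2/n$. The first term is at most $2/(c_1c^2\,nM^2)$. The second is at most
\[
 \frac{4K'}{n\sqrt{C_{r_n}}}\le\frac{4K'}{\sqrt{c_1}\,c\,\sqrt n\,M}.
\]
This gives \eqref{eq:conditional-concentration} with $C_c$ depending only on $c$. I expect the main obstacle to be the estimate $C^{(2)}_r\lesssim C_r^{3/2}$, uniformly over arbitrary non-atomic $\mu$. The naive bound $C^{(2)}\le MC$ only yields $O(1/M)$, which does not decay in $n$. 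The grid comparison of $\sup a_r$ with the largest square mass, and of that mass with $\sqrt{C_r}$, is the step that must be checked carefully, including boundary squares and the Gaussian tail sum.
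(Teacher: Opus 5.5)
Your proof is correct. The mean--variance computation and the lower bound $C_r\gtrsim M^2r^2$ are essentially the paper's. The paper uses the law of total variance conditional on $\Xi_n$ instead of expanding $E_n^2$ by overlap pattern, but the resulting Mecke integrals are identical.

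The genuine difference is the key estimate $C_r^{(2)}\lesssim C_r^{3/2}$. The paper gets $\sup_x a_r(x)\le\sqrt{C_r}$ directly from positive definiteness of the Gaussian kernel. It writes $a_r(x)=\langle k_r(x,\cdot),\int k_r(y,\cdot)\mu(\dd y)\rangle$ in the reproducing kernel Hilbert space and uses $k_r(x,x)=1$ with Cauchy--Schwarz. This gives $C_r^{(2)}\le C_r^{3/2}$ with constant $1$ and no geometry.

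Your route instead bounds $\sup a_r$ by an absolute multiple of the largest cell mass, and that mass by $e\sqrt{C_r}$. It costs an extra constant and a tiling argument. In exchange it uses only a lower bound on the kernel near the diagonal and summable off-diagonal decay, so it would also cover connection kernels that are not positive definite.

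One small correction in that step. Your tiles have side $s=1/\lceil 1/r\rceil\in[r/2,r]$ for $r\le1$, not exactly $r$. So a tile at Chebyshev lattice distance $j\ge1$ from the tile containing $x$ (there are $8j$ of them) has weight at most $e^{-(j-1)^2s^2/r^2}\le e^{-(j-1)^2/4}$, not $e^{-(j-1)^2}$. This is still summable, so $K$ remains absolute. Also, $r_n=cn^{-1/2}\le1$ because $c\le1$ and $n\ge1$, so the case $r>1$ never arises.
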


\begin{proof}
Conditional on $\Xi_n$, the edge indicators are independent.  The first- and
second-order Mecke formulas give, for $r=r_n$,
\begin{align*}
 \EE(E_n\mid\mu)
 &=\frac{n^2}{2}\iint_{D^2}k_r(x,y)\mu(\dd x)\mu(\dd y),\\
 \EE[\Var(E_n\mid\Xi_n,\mu)\mid\mu]
 &=\frac{n^2}{2}\iint_{D^2}k_r(x,y)(1-k_r(x,y))
      \mu(\dd x)\mu(\dd y),\\
 \Var(\EE[E_n\mid\Xi_n,\mu]\mid\mu)
 &=\frac{n^2}{2}\iint_{D^2}k_r(x,y)^2\mu(\dd x)\mu(\dd y)
   +n^3C_r^{(2)}.
\end{align*}
Hence the law of total variance yields \eqref{eq:edge-mean}--\eqref{eq:edge-variance}.

Because $k_r$ is positive definite and $k_r(x,x)=1$, Cauchy--Schwarz in
its reproducing kernel Hilbert space gives
\begin{equation}
 \begin{aligned}
 a_r(x)^2
 &\le \|k_r(x,\cdot)\|^2
 \left\|\int_Dk_r(y,\cdot)\mu(\dd y)\right\|^2=C_r,\\
 C_r^{(2)}
 &\le \bigl(\sup_{x\in D}a_r(x)\bigr)C_r\le C_r^{3/2}.
 \end{aligned}
 \label{eq:kernel-cs}
\end{equation}
Consequently,
\begin{equation}
 \frac{\Var(E_n\mid\mu)}{e_n(\mu)^2}
 \le\frac2{n^2C_{r_n}}+\frac4{n\sqrt{C_{r_n}}}.
 \label{eq:relative-variance}
\end{equation}

Partition $D$ into at most $C_Dr^{-2}$ squares of diameter at most $r$.
Since $k_r(x,y)\ge e^{-1}$ on each square,
\begin{equation}
 C_r\ge e^{-1}\sum_Q\mu(Q)^2
 \ge e^{-1}\frac{M^2}{\#\{Q\}}
 \ge c_DM^2r^2.
 \label{eq:correlation-lower}
\end{equation}
Substituting $r_n=cn^{-1/2}$ into \eqref{eq:relative-variance} and using
\eqref{eq:correlation-lower} proves \eqref{eq:conditional-concentration}.
\end{proof}

\subsection{The correlation integral in a typical environment}
\label{subsec:correlation}

Lemma~\ref{lem:edge-fluctuations} reduces the edge statistic to the small-scale
behavior of $C_r$.  We now specialize the environment to GMC, writing
\[
 \mu=\mu_\gamma,\qquad M=M_\gamma.
\]
The next lemma identifies the corresponding exponent.

\begin{lemma}
\label{lem:correlation-limit}
For each $0\le\gamma<2$, for almost every realization of $\mu_\gamma$,
\begin{equation}
 \lim_{r\downarrow0}\frac{\log C_r}{\log r}=\tau_\gamma.
 \label{eq:correlation-limit}
\end{equation}
\end{lemma}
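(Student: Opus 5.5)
The plan is to sandwich $C_r$ between constant multiples of the packing sum $\operatorname{Pack}_2(\mu_\gamma,\cdot)$ at comparable scales, and then read off the exponent from \eqref{eq:packing-exponent} in Proposition~\ref{prop:multifractal}. For $\gamma=0$ the statement is immediate, since then $\mu_0=\dd x$. A direct computation gives $C_r=\iint_{D^2}e^{-|x-y|^2/r^2}\dd x\dd y\asymp r^2$, so $\tau_0=2$.

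For $0<\gamma<2$ the argument runs as follows.

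\emph{Lower bound on $C_r$.} Take disjoint balls $B(x_i,r/2)$ that nearly realize $\operatorname{Pack}_2(\mu_\gamma,r/2)$. On each ball $k_r\ge e^{-1}$, so
\[
C_r\ge e^{-1}\sum_i\mu_\gamma(B(x_i,r/2))^2\ge \tfrac1{2e}\operatorname{Pack}_2(\mu_\gamma,r/2).
\]
Since $\log(r/2)/\log r\to1$, this gives $\liminf\log C_r/\log r\le\tau_\gamma$ along any sequence on which the packing limsup is attained. That is not enough by itself: because \eqref{eq:packing-exponent} is only a limsup, I also need a lower bound on the packing sum valid at \emph{every} small scale. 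This is the main obstacle, and I address it in the third step.

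\emph{Upper bound on $C_r$.} Split the kernel dyadically. Use $k_r(x,y)\le \one\{|x-y|<r\}+\sum_{j\ge1}e^{-4^{j-1}}\one\{|x-y|<2^jr\}$, which gives
\[
C_r\le\sum_{j\ge0}e^{-4^{j-1}}\iint\one\{|x-y|<2^jr\}\mu_\gamma(\dd x)\mu_\gamma(\dd y).
\]
Cover $D$ by a grid of squares of side $\rho=2^jr$. Each pair at distance less than $\rho$ lies in a pair of adjacent squares, so the double integral is at most $9\sum_Q\mu_\gamma(Q)^2$. Splitting the grid into a bounded number of subfamilies of well-separated squares, and enclosing each square in a ball of radius comparable to $\rho$, bounds this by $C\operatorname{Pack}_2(\mu_\gamma,c\rho)$. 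The limsup in \eqref{eq:packing-exponent} then gives $\operatorname{Pack}_2(\mu_\gamma,\rho)\le\rho^{\tau_\gamma-\epsilon}$ for all $\rho$ small. The large-$j$ terms are trivially bounded by $M_\gamma^2e^{-4^{j-1}}$, and this super-exponential decay absorbs them. Together these give $C_r\le C r^{\tau_\gamma-\epsilon}$ for small $r$, hence $\liminf\log C_r/\log r\ge\tau_\gamma-\epsilon$.

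\emph{Matching lower bound at every scale.} The remaining task is to show $\operatorname{Pack}_2(\mu_\gamma,r)\ge r^{\tau_\gamma+\epsilon}$ for all small $r$, not just along a subsequence. I would first check whether \cite{Bertacco} actually states a genuine limit (the $L^q$-spectrum for GMC is usually proved as a limit); if so, the step is immediate. If not, I would derive the lower bound directly in two regimes.

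For $\gamma\le1$, Cauchy--Schwarz over the grid at side $r$ combined with \eqref{eq:exact-dimension} works. By Egorov's theorem, a set $A$ of $\mu_\gamma$-mass at least $M_\gamma/2$ satisfies $\mu_\gamma(B(x,r))\ge r^{2-\nu+\epsilon}$ for small $r$ uniformly on $A$. Pick a maximal $r$-separated family in $A$; it covers $A$ by balls of radius $2r$. This yields
\[
\sum_i\mu_\gamma(B(x_i,r))^2\ge\min_i\mu_\gamma(B(x_i,r))\cdot\tfrac1C\mu_\gamma(A)\ge c\,r^{2-\nu+\epsilon},
\]
using bounded overlap after rescaling. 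That matches $\tau_\gamma=2-\gamma^2$ only when $\gamma^2=\nu$, so it is not the right exponent and I need something sharper.

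For the correct exponent I would use the singularity spectrum \eqref{eq:bertacco-spectrum}. Let $\alpha^*$ maximize $f_\gamma(\alpha)-2\alpha$ over $I_\gamma$; the Legendre transform identity $\tau_\gamma=\min_{\alpha\in I_\gamma}(2\alpha-f_\gamma(\alpha))$ gives the interior optimum for $\gamma\le1$ and the endpoint $\alpha=(\sqrt2-\gamma/\sqrt2)^2$ for $\gamma>1$. By Egorov again, on a subset of $F_{\alpha^*}$ the local dimension bound $\mu_\gamma(B(x,r))\ge r^{\alpha^*+\epsilon}$ holds uniformly at all small $r$. Since $\dim_H F_{\alpha^*}=f_\gamma(\alpha^*)$, the $r$-packing number of this set is at least $r^{-f_\gamma(\alpha^*)+\epsilon}$ for \emph{all} small $r$. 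Here I must pass to a subset of positive $\mathcal H^{f-\epsilon}$-measure (via Frostman's lemma) so that a lower bound on lower box dimension is available at every scale. Summing then gives $\operatorname{Pack}_2(\mu_\gamma,r)\ge r^{2\alpha^*-f_\gamma(\alpha^*)+3\epsilon}=r^{\tau_\gamma+3\epsilon}$.

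The Frostman/Egorov uniformization at every scale is the delicate point, and I expect it to be the main work of the proof. Once it is in place, the lower bound from the first step together with the upper bound from the second establishes \eqref{eq:correlation-limit} almost surely.
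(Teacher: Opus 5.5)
Your overall strategy is the paper's. The limsup in \eqref{eq:packing-exponent} bounds $\operatorname{Pack}_2$ from above at every small scale, which gives $\liminf_{r\downarrow0}\log C_r/\log r\ge\tau_\gamma$; your dyadic-shell bound on $k_r$ is a valid substitute for the paper's truncation at $r^{1-\delta}$. The every-scale lower bound must then come from the spectrum \eqref{eq:bertacco-spectrum}. For $\gamma<1$ your optimizer is interior, and the argument goes through once the uniformization below is done properly.

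For $\gamma\ge1$ the lower-bound step fails as written. This includes $\gamma=1$, where the minimizer $t=2\gamma$ of $2+\gamma^2-2\gamma t+t^2/2$ already reaches the boundary. You work at the endpoint $\alpha^*=(2-\gamma)^2/2$ of $I_\gamma$, where $f_\gamma(\alpha^*)=0$. Zero Hausdorff dimension gives no counting bound: $F_{\alpha^*}$ may be empty, your exponent $f-\epsilon$ is negative, and no point with $\mu_\gamma(B(x,r))\ge r^{\alpha^*+\epsilon}$ is produced. The fix is to use only interior levels, e.g.\ $\alpha(t)=2+\gamma^2/2-\gamma t$ with $t\in\mathbb Q\cap(-2,2)$; these are countably many, so one probability-one event serves all of them. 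Each level gives $\limsup_{r\downarrow0}\log C_r/\log r\le 2\alpha(t)-f_\gamma(\alpha(t))=2+\gamma^2-2\gamma t+t^2/2$, and letting $t\uparrow2$ recovers $(2-\gamma)^2$.

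Second, your Egorov step has no measure to act on, since $\mu_\gamma(F_\alpha)=0$ for $\alpha\ne2-\nu$ by \eqref{eq:exact-dimension}. The step you expect to be the main work is in fact short:
\begin{enumerate}
\item Let $F^{(\epsilon)}_{\alpha,m}$ be the points of $F_\alpha$ at distance at least $1/m$ from $\partial D$ with $\mu_\gamma(B(x,\rho))\ge\rho^{\alpha+\epsilon}$ for all $\rho<1/m$. These sets increase to $F_\alpha$.
\item Countable stability of $\dim_H$ then gives $m$ with $\dim_H F^{(\epsilon)}_{\alpha,m}>s$, for any fixed $s<f_\gamma(\alpha)$.
\item Hence $\mathcal H^s_{6r}(F^{(\epsilon)}_{\alpha,m})\ge1$ for all small $r$. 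A maximal $3r$-separated subset, whose $3r$-balls cover the set, therefore has at least $(6r)^{-s}$ points at every small scale. This is just $\underline{\dim}_B\ge\dim_H$; no Frostman lemma is needed.
\item The $r$-balls around these points are disjoint and $k_r\ge e^{-4}$ on each, so $C_r\ge e^{-4}(6r)^{-s}r^{2\alpha+2\epsilon}$.
\end{enumerate}
Your Frostman route can also be made to work, but only in a specific order. First take a Frostman measure on a compact subset of $F_\alpha$, then apply Egorov with respect to that measure, and again only for interior $\alpha$.
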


\begin{proof}
For $\gamma=0$, direct integration gives $C_r\asymp r^2$.  Fix
$0<\gamma<2$.  We first bound $C_r$ from above.  Use
\eqref{eq:packing-exponent} and \eqref{eq:bertacco-spectrum} on the
probability-one event where they hold for all
\[
 \alpha(t)=2+\frac{\gamma^2}{2}-\gamma t,
 \qquad t\in\mathbb Q\cap(-2,2).
\]
For every $\epsilon>0$,
\begin{equation}
 \operatorname{Pack}_2(\mu,r)\le r^{\tau_\gamma-\epsilon}
 \quad\text{for all sufficiently small }r.
 \label{eq:packing-upper}
\end{equation}
Fix $0<\delta<1$ and put $r_\delta=r^{1-\delta}$.  Then
\begin{equation}
 C_r\le\int_D\mu(B(x,r_\delta))\mu(\dd x)+M^2e^{-r^{-2\delta}}.
 \label{eq:kernel-tail}
\end{equation}

Let $(D_i)_i$ be the square grid of side $r_\delta$, and write $j\sim i$ when
$\operatorname{dist}(D_i,D_j)\le r_\delta$.  The number of $j\sim i$ is bounded by an
absolute constant.  Hence
\begin{align}
 \int_D\mu(B(x,r_\delta))\mu(\dd x)
 &\le \sum_i\mu(D_i)\sum_{j\sim i}\mu(D_j) \\
 &\le \frac12\sum_i\sum_{j\sim i}
       \bigl(\mu(D_i)^2+\mu(D_j)^2\bigr)
 \le C_1\sum_i\mu(D_i)^2.
 \label{eq:grid-neighbor}
\end{align}
Color the grid by the $16$ residue classes of its two coordinates modulo $4$.
For every cell with positive mass choose $x_i\in D_i\cap D$.  Within one color
class the balls $B(x_i,\sqrt2r_\delta)$ are pairwise disjoint and
$D_i\subset B(x_i,\sqrt2r_\delta)$.  Hence
\begin{equation}
 \sum_i\mu(D_i)^2
 \le16\operatorname{Pack}_2(\mu,\sqrt2r_\delta),
 \qquad
 \int_D\mu(B(x,r_\delta))\mu(\dd x)
 \le C_2\operatorname{Pack}_2(\mu,\sqrt2r_\delta).
 \label{eq:grid-packing}
\end{equation}
Equations~\eqref{eq:packing-upper}, \eqref{eq:kernel-tail}, and
\eqref{eq:grid-packing} imply
\[
 \liminf_{r\downarrow0}\frac{\log C_r}{\log r}
 \ge(1-\delta)(\tau_\gamma-\epsilon).
\]
Let $\delta,\epsilon\downarrow0$.

For the matching lower bound on $C_r$, fix $t\in\mathbb Q\cap(-2,2)$, set
$\alpha=\alpha(t)$, and choose
\[
 0<s<f_\gamma(\alpha),\qquad \epsilon>0.
\]
For $m\ge1$, define
\[
 F_{\alpha,m}^{(\epsilon)}
 =\left\{x\in F_\alpha:
 \operatorname{dist}(x,\partial D)\ge m^{-1},\quad
 \mu(B(x,\rho))\ge\rho^{\alpha+\epsilon}
 \ \text{for all }0<\rho<m^{-1}\right\}.
\]
Since $F_{\alpha,m}^{(\epsilon)}\uparrow F_\alpha$, countable stability of
Hausdorff dimension gives $m$ such that, with
$F=F_{\alpha,m}^{(\epsilon)}$ and $r_0=m^{-1}$,
\begin{equation}
 \dim_HF>s,
 \qquad
 \mu(B(x,r))\ge r^{\alpha+\epsilon}
 \quad(x\in F,\ 0<r<r_0).
 \label{eq:uniform-level}
\end{equation}
Let $x_1,\ldots,x_{N_r}$ be a maximal $3r$-separated subset of $F$.
Then $B(x_j,3r)$ cover $F$, while $B(x_j,r)$ are pairwise disjoint.  Since
$\dim_HF>s$, for all sufficiently small $r$,
\begin{equation}
 1\le\mathcal H^s_{6r}(F)
 \le N_r(6r)^s,
 \qquad
 N_r\ge6^{-s}r^{-s}.
 \label{eq:hausdorff-count}
\end{equation}
Moreover $k_r(x,y)\ge e^{-4}$ on $B(x_j,r)^2$, so
\begin{equation}
 C_r\ge e^{-4}\sum_{j=1}^{N_r}\mu(B(x_j,r))^2
 \ge c r^{-s+2\alpha+2\epsilon}.
 \label{eq:level-lower}
\end{equation}
Letting $s\uparrow f_\gamma(\alpha)$ and $\epsilon\downarrow0$ gives
\[
 \limsup_{r\downarrow0}\frac{\log C_r}{\log r}
 \le\inf_{t\in\mathbb Q\cap(-2,2)}
       \{2\alpha(t)-f_\gamma(\alpha(t))\}.
\]
By continuity the infimum is unchanged over $-2<t<2$, and
\[
 2\alpha(t)-f_\gamma(\alpha(t))
 =2+\gamma^2-2\gamma t+\frac{t^2}{2}.
\]
Therefore
\[
 \inf_{-2<t<2}\left(2+\gamma^2-2\gamma t+\frac{t^2}{2}\right)
 =\begin{cases}
 2-\gamma^2,&0<\gamma\le1,\\
 (2-\gamma)^2,&1<\gamma<2,
 \end{cases}
 =\tau_\gamma.
\]
For $\gamma\ge1$ the boundary value is understood as $t\uparrow2$.
This proves \eqref{eq:correlation-limit}.
\end{proof}

\subsection{Mass-typical degrees}
\label{subsec:typical-degree}

Continue with $\mu=\mu_\gamma$.  For the degree statistic, the relevant quantity is the local connection mass
$a_{r_n}(x)$.  Exact dimensionality determines its scale at
$\mu_\gamma$-typical points, and the Palm description transfers that scale to
vertex degrees.  For $t>0$, set
\begin{equation}
 Z_n(t)=\#\left\{v:
   \left|\log(1+d_v)-\frac\nu2\log n\right|>t\right\}.
 \label{eq:bad-vertex-count}
\end{equation}
The next lemma shows that, at thresholds proportional to $\log n$, these
atypical vertices form a vanishing fraction of the graph.

\begin{lemma}
\label{lem:typical-degree}
For each $0\le\gamma<2$, for almost every realization of $\mu_\gamma$ and every $\epsilon>0$,
\begin{equation}
 \frac{Z_n(\epsilon\log n)}{N_n}\one_{\{N_n>0\}}
 \longrightarrow0
 \quad\text{in conditional probability}.
 \label{eq:bad-fraction-limit}
\end{equation}
\end{lemma}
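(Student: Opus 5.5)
Fix a realization of $\mu=\mu_\gamma$ on the probability-one event where \eqref{eq:exact-dimension} holds, $\mu$ is non-atomic, and $M>0$. Since $N_n\mid\mu\sim\operatorname{Poisson}(nM)$, we have $N_n/(nM)\to1$ in conditional probability. It therefore suffices to show
\[
 \EE[Z_n(\epsilon\log n)\mid\mu]=o(n).
\]
The multivariate Mecke formula \eqref{eq:mecke}, applied to the edge-marked process, gives
\[
 \EE[Z_n(t)\mid\mu]=n\int_D\PP\Bigl(\bigl|\log(1+d_x)-\tfrac\nu2\log n\bigr|>t\Bigr)\mu(\dd x),
\]
where $d_x$ is the degree of an added point at $x$. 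Given $\mu$, the variable $d_x$ is Poisson with mean $\lambda_n(x)=n\,a_{r_n}(x)$, because of independent thinning of the Poisson process by $k_{r_n}(x,\cdot)$. By dominated convergence (the integrand is at most $1$ and $\mu$ is finite), it then suffices to show that for $\mu$-a.e.\ $x$ the probability tends to zero.

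Fix such an $x$, with $x\in D$, so that $\log\mu(B(x,\rho))/\log\rho\to2-\nu$. We first estimate $a_{r_n}(x)$.
\begin{itemize}
\item \emph{Lower bound.} From $a_r(x)\ge e^{-1}\mu(B(x,r))$ we get $a_r(x)\ge r^{2-\nu+\delta}$ for small $r$.
\item \emph{Upper bound.} Split as in \eqref{eq:kernel-tail}: $a_r(x)\le\mu(B(x,r^{1-\delta}))+Me^{-r^{-2\delta}}\le r^{(1-\delta)(2-\nu-\delta)}$.
\end{itemize}
With $r_n=cn^{-1/2}$ this yields
\[
 \frac{\log\lambda_n(x)}{\log n}\longrightarrow1-\frac{2-\nu}{2}=\frac\nu2 .
\]

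Next we handle the Poisson fluctuations. If $\nu>0$, then $\lambda_n(x)\to\infty$, and Chebyshev gives $d_x/\lambda_n(x)\to1$ in probability. Hence $\log(1+d_x)=\log\lambda_n(x)+o_P(1)=\tfrac\nu2\log n+o(\log n)$, and the probability that the deviation exceeds $\epsilon\log n$ tends to zero.

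If $\nu=0$, i.e.\ $\gamma=0$, then $\mu$ is Lebesgue measure and $\lambda_n(x)$ is bounded above by a constant times $c^2$. Markov's inequality then gives $\PP(\log(1+d_x)>\epsilon\log n)\le\lambda_n(x)/n^\epsilon\to0$. The same Markov argument covers the upper tail in general, and the lower tail for $\nu>0$ follows from Chebyshev as above. This establishes the pointwise convergence, and dominated convergence gives the $o(n)$ bound on the conditional mean.

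To conclude, combine with the size bound. For any $\eta>0$, on the event $\{N_n\ge nM/2\}$ Markov's inequality gives
\[
 \PP\bigl(Z_n>\eta N_n\bigr)\le\frac{2\,\EE[Z_n(\epsilon\log n)\mid\mu]}{\eta nM}\longrightarrow0,
\]
and $\PP(N_n<nM/2)\to0$.

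The main obstacle is the local upper bound on $a_{r_n}(x)$. Exact dimensionality controls only $\mu(B(x,\rho))$ at the given point, so the Gaussian tail must be handled. Since $k_r$ is at most $e^{-r^{-2\delta}}$ outside $B(x,r^{1-\delta})$, the crude bound by total mass suffices, and no uniformity in $x$ is needed because dominated convergence works pointwise. The points $x$ near $\partial D$ where \eqref{eq:exact-dimension} might be stated only in the interior form a $\mu$-null set, since $\mu$ is non-atomic and $\mu(\partial D)=0$ a.s.
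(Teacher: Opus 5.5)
Your proposal is correct and follows essentially the same route as the paper. Both arguments use the exact-dimension window for $\mu(B(x,r))$, the sandwich $e^{-1}\mu(B(x,r))\le a_r(x)\le\mu(B(x,r^{1-\delta}))+Me^{-r^{-2\delta}}$, the Poisson Palm degree $d_x\sim\operatorname{Poisson}(na_{r_n}(x))$ with Chebyshev (or a Poisson tail bound when $\gamma=0$), Mecke plus dominated convergence, and finally the ratio with $N_n/n\to M>0$.

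There are two minor slips, neither of which affects the argument. The Markov bound should read $\lambda_n(x)/(n^\epsilon-1)$. Your closing boundary remark is unnecessary: every $x\in D$ has $B(x,r)\subset D$ for small $r$, and $\mu(\partial D)=0$ comes from $\EE\mu=\dd x$, not from non-atomicity.
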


\begin{proof}
By \eqref{eq:exact-dimension}, for
$\mu_\gamma$-almost every $x$ and every $\eta>0$,
\begin{equation}
 r^{2-\nu+\eta}\le\mu(B(x,r))\le r^{2-\nu-\eta}
 \quad\text{for all sufficiently small }r.
 \label{eq:local-mass-window}
\end{equation}
For $0<\delta<1$,
\begin{equation}
 e^{-1}\mu(B(x,r))\le a_r(x)
 \le\mu(B(x,r^{1-\delta}))+Me^{-r^{-2\delta}}.
 \label{eq:local-kernel}
\end{equation}
Equations~\eqref{eq:local-mass-window}--\eqref{eq:local-kernel} give
\[
 (1-\delta)(2-\nu-\eta)
 \le\liminf_{r\downarrow0}\frac{\log a_r(x)}{\log r}
 \le\limsup_{r\downarrow0}\frac{\log a_r(x)}{\log r}
 \le2-\nu+\eta.
\]
Letting $\delta,\eta\downarrow0$ and then taking $r_n=cn^{-1/2}$ yields
\begin{equation}
 \frac{\log a_r(x)}{\log r}\longrightarrow2-\nu,
 \qquad
 \frac{\log(na_{r_n}(x))}{\log n}\longrightarrow\frac\nu2.
 \label{eq:local-degree-mean}
\end{equation}
Thus the Palm mean degree at a mass-typical location has exponent $\nu/2$.
Under the Palm law with a vertex inserted at $x$, independent edge thinning
gives
\begin{equation}
 d_x\mid\mu\sim\operatorname{Poisson}(na_{r_n}(x)).
 \label{eq:palm-degree}
\end{equation}
Define
\[
 p_n(x,\epsilon)
 =\Pr\left(
 \left|\frac{\log(1+d_x)}{\log n}-\frac\nu2\right|>\epsilon
 \middle|\mu\right)
\]
under this Palm law.  If $\nu>0$, then
$\lambda_n=na_{r_n}(x)\to\infty$, and for $0<\eta<1$,
\[
 \Pr\left(\left|\frac{d_x}{\lambda_n}-1\right|>\eta\middle|\mu\right)
 \le\frac1{\eta^2\lambda_n}\longrightarrow0.
\]
Together with \eqref{eq:local-degree-mean}, this gives
$p_n(x,\epsilon)\to0$.  If $\gamma=0$, then
$na_{r_n}(x)\le\pi c^2$ and
\[
 p_n(x,\epsilon)
 \le\Pr\{\operatorname{Poisson}(\pi c^2)>n^\epsilon-1\}
 \longrightarrow0.
\]

Mecke's formula and dominated convergence now average the Palm statement over
all vertices:
\begin{equation}
 \frac1n\EE[Z_n(\epsilon\log n)\mid\mu]
 =\int_Dp_n(x,\epsilon)\mu(\dd x)\longrightarrow0.
 \label{eq:bad-count-conditional-mean}
\end{equation}
Thus $Z_n(\epsilon\log n)/n\to0$ in conditional probability, and, since
$N_n/n\to M>0$,
\begin{equation}
 \frac{Z_n(\epsilon\log n)}{N_n}\one_{\{N_n>0\}}
 =\frac{Z_n(\epsilon\log n)/n}{N_n/n}\one_{\{N_n>0\}}
 \longrightarrow0
 \quad\text{in conditional probability}.
 \label{eq:bad-fraction-ratio}
\end{equation}
This is \eqref{eq:bad-fraction-limit}.
\end{proof}

\subsection{Fractional moments of the local connection mass}
\label{subsec:fractional}

The almost-sure argument above is enough for consistency.  The uniform tail and
risk bounds require finite-scale estimates that hold uniformly over a compact
parameter range.  Fix $0<\bar\nu<2$, put $\bar\gamma=\sqrt{2\bar\nu}$, and set
\begin{equation}
 p_0=\min\left\{\frac12,\frac{2-\bar\nu}{2\bar\nu}\right\}.
 \label{eq:p0}
\end{equation}
Then $1+p_0<2/\bar\nu$.  Define
\begin{equation}
 J_+=\max\{1,\EE M_{\bar\gamma}^{1+p_0}\},\qquad
 J_-=\max\{1,\EE M_{\bar\gamma}^{-1}\}.
 \label{eq:total-mass-constants}
\end{equation}
The first quantity is finite by \cite[Theorem~2.11]{RhodesVargas}.  For a
nonempty compact ball $B_*\subset D$,
\begin{equation}
 M_{\bar\gamma}^{-1}\le\mu_{\bar\gamma}(B_*)^{-1},
 \qquad
 \EE\mu_{\bar\gamma}(B_*)^{-1}<\infty.
 \label{eq:negative-total-mass}
\end{equation}
by Proposition~\ref{prop:gmc-moments}, so $J_-<\infty$ as well.  Write
\begin{equation}
 \zeta_\nu(q)=(2+\nu)q-\nu q^2,
 \qquad
 J_0=J_+e^{2B_0\bar\nu(1+p_0)p_0}.
 \label{eq:moment-constants}
\end{equation}

\begin{lemma}
\label{lem:uniform-mass-moments}
For every square $Q\subset D$ of side $\ell\le1$, every
$0\le\nu\le\bar\nu$, and every $q\in[1-p_0,1+p_0]$,
\begin{equation}
 \EE\mu_\gamma(Q)^q\le J_0\ell^{\zeta_\nu(q)},
 \qquad \EE M_\gamma^{-1}\le J_-.
 \label{eq:uniform-mass-moments}
\end{equation}
\end{lemma}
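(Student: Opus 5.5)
The plan is to combine exact scaling of the log-correlated covariance with Kahane's convexity inequality \cite[Theorem~2.1]{RhodesVargas}, using the comparison theorem for pointwise ordered kernels \cite[Theorem~28]{Shamov}. The hypothesis $K\ge0$ in \eqref{eq:covariance} supplies the monotonicity in $\gamma$. Throughout, $\gamma=\sqrt{2\nu}\le\bar\gamma$.

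\emph{Monotonicity in $\gamma$.} Since $K\ge0$ off the diagonal, the kernels satisfy $\gamma^2K\le\bar\gamma^2K$ pointwise. Kahane's inequality, applied to the convex functions $x\mapsto x^{1+p_0}$ and $x\mapsto x^{-1}$ on $(0,\infty)$, then gives
\[
 \EE M_\gamma^{1+p_0}\le\EE M_{\bar\gamma}^{1+p_0}\le J_+,
 \qquad
 \EE M_\gamma^{-1}\le\EE M_{\bar\gamma}^{-1}\le J_-.
\]
The second bound is already the second claim of \eqref{eq:uniform-mass-moments}. For $q\in[1,1+p_0]$, Jensen's inequality gives $\EE M_\gamma^q\le(\EE M_\gamma^{1+p_0})^{q/(1+p_0)}\le J_+$. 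For $q\in[1-p_0,1)$ it gives $\EE M_\gamma^q\le(\EE M_\gamma)^q=1$. The same bounds will hold for any field on $D$ whose covariance is $\log\frac1{|x-y|}+\tilde g$ with $\tilde g$ continuous and bounded, provided $\tilde g$ is compared to $g_0$ as in the next step.

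\emph{Scaling.} Let $Q=x_0+\ell[0,1]^2$ and substitute $x=x_0+\ell y$. The pulled-back covariance is
\[
 K(x_0+\ell y,x_0+\ell y')=\log\tfrac1\ell+\log\tfrac1{|y-y'|}+g_0(x_0+\ell y,x_0+\ell y').
\]
Write the rescaled field as $\Omega+\widetilde X$, where $\Omega\sim N(0,\log\frac1\ell)$ is independent of $\widetilde X$ and $\widetilde X$ has covariance $\log\frac1{|y-y'|}+g_0(\cdots)$. Then
\[
 \mu_\gamma(Q)\overset{d}{=}\ell^2e^{\gamma\Omega-\nu\log(1/\ell)}\widetilde M.
\]
By independence, $\EE e^{q\gamma\Omega-q\nu\log(1/\ell)}=\ell^{-\nu(q^2-q)}$. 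Together with the factor $\ell^{2q}$ this produces exactly $\ell^{\zeta_\nu(q)}$.

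\emph{Comparing $\widetilde M$ with $M_\gamma$.} The remainders $g_0(\cdots)$ and $g_0$ differ by at most $2B_0$, so
\[
 \log\tfrac1{|y-y'|}+g_0(\cdots)\le K(y,y')+2B_0.
\]
Adding an independent $N(0,2B_0)$ constant to the original field and applying Kahane's inequality to the convex map $x^q$ (for $q\ge1$) gives
\[
 \EE\widetilde M^q\le e^{2B_0\nu q(q-1)}\,\EE M_\gamma^q\le e^{2B_0\bar\nu(1+p_0)p_0}J_+.
\]
For $q<1$ the map $x^q$ is concave, so I will reverse the roles: add the $N(0,2B_0)$ constant to $\widetilde X$ instead, which dominates $K$. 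The cost factor is then $e^{2B_0\nu q(1-q)}\le e^{2B_0\bar\nu p_0(1+p_0)}$. Combining with the scaling step yields $\EE\mu_\gamma(Q)^q\le J_0\ell^{\zeta_\nu(q)}$.

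\emph{Main obstacle.} The delicate step is justifying this comparison rigorously. The rescaled field lives on $[0,1]^2$ while $K$ is only assumed on $D$ and a neighbourhood of $\overline D$. Kahane's inequality has to be applied to GMC limits, which are built from regularizations. This is where \cite[Theorems~27--28]{Shamov} is needed: it allows pointwise ordered kernels whose difference need not be a covariance kernel. I would first carry out the comparisons for the regularized fields $X_\varepsilon$ and then pass to the limit $\varepsilon\downarrow0$. Uniform integrability for this limit comes from the $1+p_0$ moment bound, since $1+p_0<2/\bar\nu$.
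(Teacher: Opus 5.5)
Your overall route is the paper's: monotonicity in $\gamma$ from $K\ge0$ and Kahane, Jensen for the total mass, then a scaling comparison costing $e^{2B_0\bar\nu(1+p_0)p_0}$. Your exponent bookkeeping $\ell^{2q}\ell^{-\nu q(q-1)}=\ell^{\zeta_\nu(q)}$ is correct.

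The gap is the factorization in the scaling step. You write the rescaled field as $\Omega+\widetilde X$, with $\Omega\sim N(0,\log\frac1\ell)$ independent of a Gaussian field $\widetilde X$ whose covariance is
\[
\log\tfrac1{|y-y'|}+g_0(x_0+\ell y,x_0+\ell y')=K(x_0+\ell y,x_0+\ell y')-\log\tfrac1\ell .
\]
Subtracting a positive constant from a covariance kernel does not preserve positive semidefiniteness. So $\widetilde X$, $\widetilde M$, and your distributional identity for $\mu_\gamma(Q)$ need not exist.

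This does fail under the lemma's hypotheses. Take $K(x,y)=\int_1^\infty k(u(x-y))\,\frac{\dd u}{u}$ with $k(z)=(1-|z|)_+^2$, which is positive definite on $\R^2$ by Askey's criterion.
\begin{itemize}
\item Then $K\ge0$, and $g_0(x,y)=-\frac32+2|x-y|-\frac12|x-y|^2$ for $|x-y|\le1$.
\item As $\ell\downarrow0$, your kernel converges uniformly to $\log\frac1{|y-y'|}-\frac32$.
\item Its quadratic form at the constant function therefore tends to $\iint_{D\times D}\log\frac1{|y-y'|}\,\dd y\,\dd y'-\frac32\approx0.80-1.5<0$.
\end{itemize}
The covariance is scale-invariant only up to a bounded error, so $\log\frac1\ell$ cannot be split off as an independent summand.

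The fix, which is what the paper does, is to never subtract the constant and instead put it on the reference side. Let $t=\log\frac1\ell$, and let $\mu^{(\ell)}$ be the chaos on $D$ of the rescaled field $u\mapsto X(x_0+\ell u)$. Its covariance $K(x_0+\ell u,x_0+\ell v)$ certainly exists, and $\mu_\gamma(Q)=\ell^2\mu^{(\ell)}(D)$. Use $|K(x_0+\ell u,x_0+\ell v)-K(u,v)-t|\le2B_0$.

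For $1\le q\le1+p_0$, use $K(x_0+\ell u,x_0+\ell v)\le K(u,v)+t+2B_0$. The right side is the reference covariance plus an independent constant Gaussian, and convex Kahane gives
\[
\EE\mu^{(\ell)}(D)^q\le e^{\nu q(q-1)(t+2B_0)}\EE M_\gamma^q .
\]
For $1-p_0\le q<1$, use $K(x_0+\ell u,x_0+\ell v)+2B_0\ge K(u,v)+t$, where both sides are genuine covariances. Concave Kahane gives
\[
e^{-2B_0\nu q(1-q)}\EE\mu^{(\ell)}(D)^q\le\ell^{\nu q(1-q)}\EE M_\gamma^q .
\]
Multiplying by $\ell^{2q}$ yields $J_0\ell^{\zeta_\nu(q)}$ with exactly your constants.

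The domain issue you single out as the main obstacle is not one. The rescaled kernel only uses $K$ on $Q\times Q\subset D\times D$.
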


\begin{proof}
Since $K\ge0$, covariance ordering and Kahane's convexity inequality give
\begin{equation}
 \EE M_\gamma^q\le\EE M_{\bar\gamma}^q\le J_+
 \quad(1\le q\le1+p_0),
 \qquad
 \EE M_\gamma^{-1}\le\EE M_{\bar\gamma}^{-1}\le J_-.
 \label{eq:total-mass-comparison}
\end{equation}
For $0<q<1$, Jensen's inequality gives $\EE M_\gamma^q\le1$.  We now pass
from the total mass to a square.  Let $Q=x_0+\ell D$ be axis-parallel, and put
$t=\log(1/\ell)$.  For $u,v\in D$,
\begin{equation}
 \left|K(x_0+\ell u,x_0+\ell v)-K(u,v)-t\right|\le2B_0.
 \label{eq:rescaled-covariance}
\end{equation}
For $1\le q\le1+p_0$, convex comparison with the reference field plus an
independent constant Gaussian of variance $t+2B_0$ yields
\begin{equation}
 \EE\mu_\gamma(Q)^q
 \le \ell^{2q}e^{\nu q(q-1)(t+2B_0)}\EE M_\gamma^q
 \le J_0\ell^{\zeta_\nu(q)}.
 \label{eq:local-positive-comparison}
\end{equation}
For $1-p_0\le q<1$, add variance $2B_0$ to the rescaled field and variance
$t$ to the reference field.  Concave comparison gives
\begin{equation}
 \EE\mu_\gamma(Q)^q
 \le e^{2B_0\nu q(1-q)}\ell^{\zeta_\nu(q)}\EE M_\gamma^q
 \le J_0\ell^{\zeta_\nu(q)}.
 \label{eq:local-concave-comparison}
\end{equation}
The comparisons are performed at a common regularization and then passed to
the chaos limit.  Equations~\eqref{eq:total-mass-comparison}--\eqref{eq:local-concave-comparison}
prove \eqref{eq:uniform-mass-moments}.
\end{proof}

We use a lattice kernel to transfer these mass moments to $a_r$.  Set
\begin{equation}
 w_z=\exp\left\{-\frac{((\|z\|_\infty-1)_+)^2}{8}\right\},\qquad
 W_0=\sum_{z\in\Z^2}w_z<\infty,
 \label{eq:lattice-weight}
\end{equation}
and let
\begin{equation}
 C_+=J_0W_0^{p_0},\qquad
 C_-=J_0e^{p_0}(2\sqrt2)^{2p_0+\bar\nu p_0^2}.
 \label{eq:kernel-moment-constants}
\end{equation}

\begin{lemma}
\label{lem:kernel-moments}
For $0<p\le p_0$, $0<r\le1$, and $0\le\nu\le\bar\nu$,
\begin{align}
 \EE\int_D a_r(x)^p\mu_\gamma(\dd x)
 &\le C_+r^{(2-\nu)p-\nu p^2},
 \label{eq:kernel-positive}\\
 \shortintertext{\textit{and}}
 \EE\int_D a_r(x)^{-p}\mu_\gamma(\dd x)
 &\le C_-r^{-(2-\nu)p-\nu p^2}.
 \label{eq:kernel-negative}
\end{align}
\end{lemma}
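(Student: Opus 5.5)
Partition $D$ into the dyadic grid of squares $Q_i$ of side $\ell\asymp r$; taking side exactly $r$ is convenient (if $r$ does not divide 1, use squares of side at most $r$ clipped to $D$). For $x\in Q_i$, split $a_r(x)$ over the lattice displacement $z$ between $Q_i$ and $Q_j=Q_{i+z}$. If $\|z\|_\infty=k$, every $y\in Q_j$ satisfies $|x-y|\ge (k-1)_+r$, so $k_r(x,y)\le e^{-((k-1)_+)^2}\le w_z$. Hence
\[
 a_r(x)\le\sum_{z}w_z\,\mu_\gamma(Q_{i+z}),\qquad x\in Q_i .
\]
The exponent $1/8$ in $w_z$ leaves room to enlarge the squares, or to handle clipped squares near $\partial D$, without changing the constant.

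\textbf{Positive moment \eqref{eq:kernel-positive}.} Since $0<p\le p_0\le 1/2$, I would use Jensen with the probability weights $w_z/W_0$:
\[
 \Bigl(\sum_z w_z\mu(Q_{i+z})\Bigr)^p\le W_0^{p}\sum_z\frac{w_z}{W_0}\mu(Q_{i+z})^p .
\]
This gives
\[
 \int_D a_r^p\,\dd\mu\le W_0^{p-1}\sum_i\sum_z w_z\,\mu(Q_i)\mu(Q_{i+z})^p .
\]
By H\"older's inequality with exponents $1+p$ and $(1+p)/p$,
\[
 \mu(Q_i)\mu(Q_{i+z})^p\le \frac{1}{1+p}\,\mu(Q_i)^{1+p}+\frac{p}{1+p}\,\mu(Q_{i+z})^{1+p}.
\]
Lemma~\ref{lem:uniform-mass-moments} with $q=1+p\in[1,1+p_0]$ then gives expectation at most $J_0\ell^{\zeta_\nu(1+p)}$ per square. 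Summing over the $\asymp\ell^{-2}$ squares gives $\ell^{\zeta_\nu(1+p)-2}$, and
\[
 \zeta_\nu(1+p)-2=(2-\nu)p-\nu p^2 .
\]
Summing over $z$ gives a further factor $W_0$. Finally $W_0^{p-1}W_0\le W_0^{p_0}$ because $W_0\ge1$, which yields $C_+$ up to the constant bookkeeping.

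\textbf{Negative moment \eqref{eq:kernel-negative}.} Here I would bound $a_r$ from below instead. Pick squares of side $\ell=r/(2\sqrt2)$, so that $Q$ has diameter $r/2$. For $x\in Q$, all $y\in Q$ satisfy $|x-y|\le r/2$, hence $k_r(x,y)\ge e^{-1/4}\ge e^{-1}$ and
\[
 a_r(x)\ge e^{-1}\mu(Q).
\]
Therefore
\[
 \int a_r^{-p}\,\dd\mu\le e^{p}\sum_Q\mu(Q)^{1-p},
\]
and Lemma~\ref{lem:uniform-mass-moments} with $q=1-p\in[1-p_0,1)$ gives
\[
 \EE\le e^{p}J_0\,\ell^{\zeta_\nu(1-p)-2}.
\]
Here
\[
 \zeta_\nu(1-p)-2=-(2-\nu)p-\nu p^2<0,
\]
so substituting $\ell=r/(2\sqrt2)$ produces the factor $(2\sqrt2)^{(2-\nu)p+\nu p^2}\le(2\sqrt2)^{2p_0+\bar\nu p_0^2}$. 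This matches $C_-$. Points of $D$ not covered by full squares need the grid to tile $D$; I would handle this by choosing $\ell$ so that $1/\ell$ is an integer, at the cost of a factor of at most 2 in $\ell$, which is absorbed into the constants.

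\textbf{Main obstacle.} The delicate step is the constant bookkeeping: matching the stated constants $C_\pm$ exactly while making the grid tile $D$ with side comparable to $r$. The moment estimates themselves are routine once Lemma~\ref{lem:uniform-mass-moments} is available. If the exact constants do not come out, I would accept slightly larger constants that still depend only on $K,c,\bar\nu$.
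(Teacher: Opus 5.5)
\textbf{The gap is in the positive moment.} Your negative-moment argument is fine, but the positive-moment argument fails at its first step. The displayed inequality
\[
 \Bigl(\sum_z w_z\mu(Q_{i+z})\Bigr)^p\le W_0^{p}\sum_z\frac{w_z}{W_0}\mu(Q_{i+z})^p
\]
goes the wrong way. Since $t\mapsto t^p$ is concave for $0<p<1$, Jensen with weights $w_z/W_0$ gives $\ge$, not $\le$.

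A one-cell example shows it is false. If $\mu(Q_{i+z_0})=m$ and every other neighbouring mass vanishes, the left side is $w_{z_0}^pm^p$ and the right side is $W_0^{p-1}w_{z_0}m^p$. Their ratio is $(W_0/w_{z_0})^{1-p}>1$. The later steps (the product inequality for $\mu(Q_i)\mu(Q_{i+z})^p$ and the sum over $z$) are correct but have no valid starting point.

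The natural pointwise repair $(\sum_z c_z)^p\le\sum_z c_z^p$ does not rescue the statement. It produces the constant $\sum_z w_z^p$, which is of order $p^{-1}$ as $p\downarrow0$. That is not the $p$-uniform constant $C_+=J_0W_0^{p_0}$ claimed. Uniformity in $p$ is needed downstream: Lemma~\ref{lem:bad-count} and the proof of Theorem~\ref{thm:single} take $p=\epsilon/(4\bar\nu)$, which is arbitrarily small.

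\textbf{The missing idea.} Keep the power $p$ on the whole smoothed mass, and decouple over the cell index rather than over the displacement. With $b_i=\mu_\gamma(Q_i)$ extended by zero, H\"older over $i$ with exponents $1+p$ and $(1+p)/p$ gives
\[
 \sum_ib_i(w*b)_i^p\le\|b\|_{1+p}\|w*b\|_{1+p}^p .
\]
The discrete Young inequality in $\ell^{1+p}$ then gives $\|w*b\|_{1+p}\le W_0\|b\|_{1+p}$; equivalently, apply Jensen to the \emph{convex} power $t^{1+p}$. Hence
\[
 \int_D a_r^p\,\dd\mu_\gamma\le W_0^p\sum_ib_i^{1+p},
\]
which is the bound you were aiming for and is what the paper does.

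\textbf{On the constants.} The tiling issue you flagged is not the real obstacle. The paper uses one dyadic side $r/(2\sqrt2)<\ell\le r/\sqrt2$. This grid tiles $D$, and its cells have diameter at most $r$, giving $a_r\ge e^{-1}b_i$. Cells at $\ell^\infty$-displacement $k$ are at distance at least $(k-1)_+r/(2\sqrt2)$, giving $a_r\le(w*b)_i$; this is where the $1/8$ in $w_z$ comes from. Using $\ell\le r$ for the positive exponent and $\ell>r/(2\sqrt2)$ for the negative one then yields $C_\pm$ exactly.
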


\begin{proof}
Choose a dyadic length
\[
 \frac{r}{2\sqrt2}<\ell\le\frac r{\sqrt2}
\]
and partition $D$ into dyadic squares $(D_i)_i$ of side $\ell$.  Put
$b_i=\mu_\gamma(D_i)$ and extend $(b_i)$ by zero to $\mathbb Z^2$.  Since
$\EE\mu_\gamma(\partial D_i)=0$, all grid boundaries have zero chaos mass
for almost every realization of $\mu_\gamma$.  For $x\in D_i$,
\begin{equation}
 a_r(x)\ge e^{-1}b_i,
 \qquad
 a_r(x)\le(w*b)_i.
 \label{eq:cell-kernel-bounds}
\end{equation}
Hence
\begin{equation}
 \int_D a_r(x)^{-p}\mu_\gamma(\dd x)
 \le e^p\sum_i b_i^{1-p}.
 \label{eq:negative-cell}
\end{equation}
H\"older's inequality and discrete Young's inequality give
\begin{equation}
 \int_D a_r(x)^p\mu_\gamma(\dd x)
 \le\sum_i b_i(w*b)_i^p
 \le\|b\|_{1+p}\|w*b\|_{1+p}^{p}
 \le W_0^p\sum_i b_i^{1+p}.
 \label{eq:positive-cell}
\end{equation}
Lemma~\ref{lem:uniform-mass-moments} yields
\begin{equation}
 \EE\sum_i b_i^{1\pm p}
 \le J_0\ell^{\zeta_\nu(1\pm p)-2},
 \label{eq:cell-moment-sums}
\end{equation}
where
\begin{equation}
 \zeta_\nu(1+p)-2=(2-\nu)p-\nu p^2\ge0,
 \qquad
 \zeta_\nu(1-p)-2=-(2-\nu)p-\nu p^2.
 \label{eq:cell-exponents}
\end{equation}
Using $\ell\le r$ in the positive estimate and
$\ell>r/(2\sqrt2)$ in the negative estimate gives
\eqref{eq:kernel-positive}--\eqref{eq:kernel-negative} with the constants in
\eqref{eq:kernel-moment-constants}.
\end{proof}

\subsection{A finite-scale count of atypical degrees}
\label{subsec:bad-degrees}

We now convert the fractional-moment bounds into a finite-$n$ estimate for the
count $Z_n(t)$ defined in \eqref{eq:bad-vertex-count}.  Set
\begin{equation}
 C_Z=1+C_++C_-c^{-(2p_0+\bar\nu p_0^2)}.
 \label{eq:bad-count-constant}
\end{equation}
\begin{lemma}
\label{lem:bad-count}
For $n\ge1$, $t>0$, $0<p\le p_0$, and $0\le\nu\le\bar\nu$,
\begin{equation}
 \EE Z_n(t)\le nC_Z\exp\left\{-pt+\frac{\bar\nu p^2}{2}\log n\right\}.
 \label{eq:bad-count-bound}
\end{equation}
\end{lemma}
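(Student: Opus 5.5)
The plan is to reduce $\EE Z_n(t)$ via Mecke's formula to a Palm probability, split that probability into an upper and a lower deviation, and control each by an exponential Markov bound combined with Lemma~\ref{lem:kernel-moments}. Conditionally on $\mu=\mu_\gamma$, Mecke's identity \eqref{eq:mecke} together with the Palm description \eqref{eq:palm-degree} gives
\[
 \EE[Z_n(t)\mid\mu]=n\int_D\Pr\Bigl(\bigl|\log(1+d_x)-\tfrac\nu2\log n\bigr|>t\Bigm|\mu\Bigr)\mu(\dd x),
\]
where $d_x\sim\operatorname{Poisson}(\lambda)$ and $\lambda=\lambda_n(x)=na_{r_n}(x)$. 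The two tails are handled separately and the bounds are then integrated against $\mu$ and averaged over the environment.

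For the upper tail I would use Markov's inequality with the $p$-th power. Since $\EE(1+d_x)^p\le(1+\lambda)^p\le1+\lambda^p$ for $p\le1$ by Jensen and subadditivity,
\[
 \Pr\bigl(\log(1+d_x)>\tfrac\nu2\log n+t\bigr)\le e^{-pt}n^{-\nu p/2}(1+\lambda^p).
\]
The $1$ contributes at most $e^{-pt}$, and integrating it against $\mu$ gives $\EE M_\gamma=1$. For the $\lambda^p$ term, \eqref{eq:kernel-positive} with $r_n=cn^{-1/2}$ and $c\le1$ gives
\[
 n^p\EE\int a_{r_n}^p\dd\mu\le C_+n^{p}(cn^{-1/2})^{(2-\nu)p-\nu p^2}\le C_+n^{\nu p/2+\nu p^2/2}.
\]
Here the exponent $(2-\nu)p-\nu p^2$ is nonnegative by \eqref{eq:cell-exponents}, so $c^{(\cdot)}\le1$. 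Multiplying by $n^{-\nu p/2}$ leaves $C_+e^{-pt}n^{\nu p^2/2}\le C_+e^{-pt}n^{\bar\nu p^2/2}$.

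For the lower tail the event is $1+d_x<e^{-t}n^{\nu/2}$. Markov's inequality with the $(-p)$-th power gives
\[
 \Pr\le e^{-pt}n^{\nu p/2}\,\EE(1+d_x)^{-p}.
\]
The key auxiliary fact is the Poisson bound $\EE(1+d)^{-p}\le\EE(1+d)^{-1}$ raised to the power $p$, which follows from Jensen since $u\mapsto u^p$ is concave. Combined with $\EE(1+d)^{-1}=(1-e^{-\lambda})/\lambda\le\min(1,1/\lambda)$, this yields $\EE(1+d_x)^{-p}\le\min(1,\lambda^{-p})\le\lambda^{-p}$. Then \eqref{eq:kernel-negative} gives
\[
 n^{\nu p/2}n^{-p}\EE\int a_{r_n}^{-p}\dd\mu\le C_-c^{-(2-\nu)p-\nu p^2}n^{\nu p/2-p+(2-\nu)p/2+\nu p^2/2}=C_-c^{-(2-\nu)p-\nu p^2}n^{\nu p^2/2}.
\]
Since $c\le1$ and $(2-\nu)p+\nu p^2\le2p_0+\bar\nu p_0^2$, the constant is at most $C_-c^{-(2p_0+\bar\nu p_0^2)}$. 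Adding the two tails produces exactly $C_Z$ from \eqref{eq:bad-count-constant}.

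The main obstacle is the exponent bookkeeping, specifically checking that the $n$-powers cancel down to $n^{\nu p^2/2}$ and that the $c$-powers stay inside $C_Z$ uniformly over $\nu\le\bar\nu$. A secondary point is making the conditional Mecke/Palm step rigorous when $a_{r_n}(x)$ may vanish. This is harmless because $a_r(x)\ge e^{-1}b_i>0$ for $\mu$-a.e.\ $x$, and the negative moment is finite by \eqref{eq:kernel-negative}. Finally, the $\gamma=0$ case is covered by the same lemmas.
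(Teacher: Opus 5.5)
Your proposal is correct and is essentially the paper's proof: Mecke's formula with the Poisson Palm degree, the Poisson bounds $\EE(1+D_\lambda)^p\le1+\lambda^p$ and $\EE(1+D_\lambda)^{-p}\le\lambda^{-p}$, Lemma~\ref{lem:kernel-moments} at $r_n=cn^{-1/2}\le1$, and exponential Markov bounds in both directions, with the same cancellation to $n^{\nu p^2/2}$ and the same $c$-power bookkeeping giving exactly $C_Z$. The only difference is cosmetic: you apply Markov to each Palm deviation probability before integrating, whereas the paper first bounds the vertex-averaged $\pm p$ moments of $(1+d_v)/n^{\nu/2}$ and then applies Markov to the count.
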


\begin{proof}
For $D_\lambda\sim\operatorname{Poisson}(\lambda)$ and $0<p\le1$,
\begin{align}
 \EE(1+D_\lambda)^p
 &\le(1+\lambda)^p\le1+\lambda^p,
 \label{eq:poisson-positive}\\
 \shortintertext{\textit{and}}
 \EE(1+D_\lambda)^{-p}
 &\le\left(\EE(1+D_\lambda)^{-1}\right)^p
 =\left(\frac{1-e^{-\lambda}}\lambda\right)^p
 \le\lambda^{-p}.
 \label{eq:poisson-negative}
\end{align}
Mecke's identity, \eqref{eq:palm-degree}, and
Lemma~\ref{lem:kernel-moments} imply
\begin{align}
 \frac1n\EE\sum_v\left(\frac{1+d_v}{n^{\nu/2}}\right)^p
 &\le n^{-\nu p/2}
    +n^{(1-\nu/2)p}\EE\int_D a_{r_n}^p\,\dd\mu_\gamma
 \le(1+C_+)n^{\nu p^2/2},
 \label{eq:degree-positive-moment}\\
 \shortintertext{\textit{and}}
 \frac1n\EE\sum_v\left(\frac{n^{\nu/2}}{1+d_v}\right)^p
 &\le n^{(\nu/2-1)p}\EE\int_D a_{r_n}^{-p}\,\dd\mu_\gamma
 \le C_-c^{-(2p_0+\bar\nu p_0^2)}n^{\nu p^2/2}.
 \label{eq:degree-negative-moment}
\end{align}
The exponents of $n$ satisfy
\begin{equation}
 \left(1-\frac\nu2\right)p
 -\frac12\bigl((2-\nu)p-\nu p^2\bigr)
 =\left(\frac\nu2-1\right)p
 -\frac12\bigl(-(2-\nu)p-\nu p^2\bigr)
 =\frac{\nu p^2}{2}.
 \label{eq:degree-exponent-cancellation}
\end{equation}
Exponential Markov bounds in the two deviation directions therefore give
\[
 \EE Z_n(t)
 \le ne^{-pt}\left[(1+C_+)
 +C_-c^{-(2p_0+\bar\nu p_0^2)}\right]n^{\bar\nu p^2/2},
\]
which is \eqref{eq:bad-count-bound}.
\end{proof}

To pass from the atypical-vertex count to the median, set
\[
 m_n=\log(1+\med_v d_v),
\]
and control the random graph size $N_n$.

\begin{lemma}
\label{lem:size-control}
Uniformly for $0\le\nu\le\bar\nu$,
\begin{equation}
 \EE(N_n/n)=1,\qquad \EE\frac n{N_n+1}\le J_-.
 \label{eq:size-moments}
\end{equation}
For $A,U>0$,
\begin{align}
 \PP(N_n>Un)&\le U^{-1},&
 \PP(N_n\ge2,\ N_n<n/A)&\le2J_-/A,&
 \PP(N_n\le1)&\le2J_-/n.
 \label{eq:size-tails}
\end{align}
\end{lemma}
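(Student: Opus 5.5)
The plan is to condition on the environment and use \eqref{eq:random-size}: given $\mu_\gamma$, $N_n\sim\operatorname{Poisson}(nM_\gamma)$. Every claim then reduces to an elementary Poisson identity or inequality, which we integrate against the law of $M_\gamma$. The only environment inputs are $\EE M_\gamma=1$ and the bound $\EE M_\gamma^{-1}\le J_-$ from Lemma~\ref{lem:uniform-mass-moments}. That lemma holds uniformly for $0\le\nu\le\bar\nu$, so all bounds below are uniform as well.

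For the moments, $\EE(N_n\mid\mu_\gamma)=nM_\gamma$ gives $\EE(N_n/n)=\EE M_\gamma=1$. For $\lambda>0$ and $N\sim\operatorname{Poisson}(\lambda)$ we have the standard identity
\[
 \EE\frac1{N+1}=\frac{1-e^{-\lambda}}{\lambda}\le\frac1\lambda
\]
(also used in \eqref{eq:poisson-negative}). With $\lambda=nM_\gamma$ this yields $\EE[n/(N_n+1)\mid\mu_\gamma]\le M_\gamma^{-1}$, and taking expectations gives $\EE\, n/(N_n+1)\le J_-$.

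The three tails follow from Markov-type inequalities.
\begin{enumerate}
\item Markov's inequality gives $\PP(N_n>Un)\le \EE N_n/(Un)=U^{-1}$.
\item On the event $\{N_n\ge2,\ N_n<n/A\}$ we have $N_n+1<n/A+1$. This crude bound is not enough, so use $N_n+1\le\tfrac32N_n<\tfrac32 n/A$ (valid since $N_n\ge2$). Then $n/(N_n+1)>2A/3\ge A/2$, and Markov applied to $n/(N_n+1)$ gives probability at most $2\EE[n/(N_n+1)]/A\le 2J_-/A$.
\item On $\{N_n\le1\}$ we have $1/(N_n+1)\ge1/2$, so $\PP(N_n\le1)\le2\EE[1/(N_n+1)]\le 2J_-/n$.
\end{enumerate}

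There is no real obstacle here. The one point to check is that Lemma~\ref{lem:uniform-mass-moments} really gives $\EE M_\gamma^{-1}\le J_-$ uniformly in $\gamma\le\bar\gamma$, via Kahane's comparison using $K\ge0$. The case $\gamma=0$ is trivial, since then $M_0=1$.
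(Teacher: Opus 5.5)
Your proposal is correct and follows essentially the same route as the paper's proof. Like the paper, you condition on the environment, use the Poisson identity $\EE[1/(N+1)]=(1-e^{-\lambda})/\lambda\le1/\lambda$ together with the uniform bound $\EE M_\gamma^{-1}\le J_-$ from Lemma~\ref{lem:uniform-mass-moments}, and apply Markov's inequality to $N_n$ and to $n/(N_n+1)$. The only cosmetic difference is that you pass through $N_n+1\le\tfrac32N_n$, whereas the paper states the inclusion $\{N_n\ge2,\,N_n<n/A\}\subseteq\{n/(N_n+1)>A/2\}$ directly.
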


\begin{proof}
The first identity follows from $\EE M_\gamma=1$.  The Poisson identity gives
\[
 \EE\left[\frac n{N_n+1}\middle|M_\gamma\right]
 =\frac{1-e^{-nM_\gamma}}{M_\gamma}\le M_\gamma^{-1}.
\]
Moreover,
\[
 \{N_n\ge2,\ N_n<n/A\}
 \subseteq\left\{\frac n{N_n+1}>\frac A2\right\},
 \qquad
 \{N_n\le1\}
 \subseteq\left\{\frac n{N_n+1}\ge\frac n2\right\}.
\]
Markov's inequality and \eqref{eq:size-moments} give the three bounds in
\eqref{eq:size-tails}.
\end{proof}

\subsection{An analytic confidence interval}
\label{subsec:analytic-interval}

Lemmas~\ref{lem:bad-count} and \ref{lem:size-control} allow the latent scale
$n$ to be replaced by the observed size $N_n$.  This yields a confidence
interval based only on the graph.  For $0<\delta<1$, define
\begin{equation}
 A_\delta=6J_-/\delta,\quad U_\delta=3/\delta,\quad
 b_\delta=\max(\log A_\delta,\log U_\delta),\quad
 x_\delta=\log(6A_\delta C_Z/\delta).
 \label{eq:analytic-constants}
\end{equation}
For $L\ge0$ and $x>0$, set
\begin{equation}
 \psi(L,x)=\min_{0<p\le p_0}\left\{\frac{\bar\nu pL}{2}+\frac xp\right\}.
 \label{eq:chernoff-function}
\end{equation}
The minimizer is
\[
 p_*(L,x)=
 \begin{cases}
  \min\{p_0,\sqrt{2x/(\bar\nu L)}\},&L>0,\\
  p_0,&L=0,
 \end{cases}
\]
and $\psi(L,x)$ is nondecreasing in $L$.
For $N\ge2$, put
\begin{align}
 R_\delta(N)
 &=\frac{2\psi(\log N+\log A_\delta,x_\delta)+\bar\nu b_\delta}{\log N},
 \label{eq:analytic-radius}\\
 I_\delta(G_n)
 &=[\widehat\nu_n-R_\delta(N_n),\widehat\nu_n+R_\delta(N_n)]
       \cap[0,\bar\nu].
 \label{eq:analytic-interval}
\end{align}
For $N_n\le1$, take $I_\delta(G_n)=[0,\bar\nu]$.

\begin{corollary}
\label{cor:analytic-coverage}
For all $n\ge1$,
\begin{equation}
 \inf_{0\le\nu\le\bar\nu}
 \PP_{\nu,n}\{\nu\in I_\delta(G_n)\}\ge1-\delta.
 \label{eq:analytic-coverage}
\end{equation}
For fixed $\delta$,
\[
 R_\delta(N)=O((\log N)^{-1/2}),\qquad N\to\infty.
\]
\end{corollary}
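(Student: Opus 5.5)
The plan is to put ourselves on a good event for the graph size and the atypical-degree count, use the median to squeeze $T_D$ near $\nu$ there, and then bound the complement by Lemma~\ref{lem:size-control} and Lemma~\ref{lem:bad-count}. Projection onto $[0,\bar\nu]$ is $1$-Lipschitz and $\nu\in[0,\bar\nu]$, so it suffices to show $|T_D(G_n)-\nu|\le R_\delta(N_n)$ on an event of probability at least $1-\delta$. On $\{N_n\le1\}$ the interval is $[0,\bar\nu]$ and coverage is automatic, so only $N_n\ge2$ matters.

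\emph{Deterministic step.} Fix a threshold $t>0$ and suppose $Z_n(t)<N_n/2$. The vertices with $\log(1+d_v)<\frac\nu2\log n-t$ are fewer than $N_n/2$, and so are those above $\frac\nu2\log n+t$. Since the median is $d_{(\lceil N_n/2\rceil)}$, it satisfies $|m_n-\frac\nu2\log n|\le t$. Writing $\nu=\nu\log n/\log N_n+\nu(\log N_n-\log n)/\log N_n$ then gives
\[
 |T_D(G_n)-\nu|\le\frac{2t+\bar\nu\,|\log N_n-\log n|}{\log N_n}.
\]
On the size event $\{n/A_\delta\le N_n\le U_\delta n\}$ we have $|\log N_n-\log n|\le b_\delta$. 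Take the deterministic threshold $t_n=\psi(\log n,x_\delta)$. Because $\log n\le\log N_n+\log A_\delta$ on this event and $\psi$ is nondecreasing in $L$, we get $t_n\le\psi(\log N_n+\log A_\delta,x_\delta)$. Hence the displayed bound is at most $R_\delta(N_n)$. Using a deterministic $t_n$ rather than a random $N$-dependent threshold is the key device: it lets us apply Lemma~\ref{lem:bad-count} directly.

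\emph{Probability step.} The failure event is contained in the union of three events.
\begin{itemize}
\item[(a)] $\{N_n\ge2,\,N_n<n/A_\delta\}$ has probability at most $2J_-/A_\delta=\delta/3$ by \eqref{eq:size-tails}.
\item[(b)] $\{N_n>U_\delta n\}$ has probability at most $1/U_\delta=\delta/3$.
\item[(c)] $\{Z_n(t_n)\ge N_n/2,\ N_n\ge n/A_\delta\}\subseteq\{Z_n(t_n)\ge n/(2A_\delta)\}$.
\end{itemize}
For (c), Markov's inequality and Lemma~\ref{lem:bad-count} with $p=p_*=p_*(\log n,x_\delta)$ give the bound
\[
 2A_\delta C_Z\exp\{-p_*t_n+\tfrac{\bar\nu p_*^2}{2}\log n\}.
\]
By definition of the minimiser, $p_*t_n=\frac{\bar\nu p_*^2}{2}\log n+x_\delta$. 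The bound therefore equals $2A_\delta C_Z e^{-x_\delta}=\delta/3$. Summing (a)--(c) gives coverage at least $1-\delta$, uniformly in $\nu\in[0,\bar\nu]$ and $n\ge1$. The step needing most care is matching the Chernoff optimisation to the exact constants in $x_\delta$, together with the median/strict-inequality bookkeeping. For $n=1$ the statement is vacuous in practice, but the same argument still goes through.

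\emph{Rate.} For large $L$, the choice $p=\sqrt{2x/(\bar\nu L)}\le p_0$ is admissible. This gives $\psi(L,x)\le\sqrt{2\bar\nu Lx}$. Hence the numerator of $R_\delta(N)$ is $O(\sqrt{\log N})$, and $R_\delta(N)=O((\log N)^{-1/2})$.
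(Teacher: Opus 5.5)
Your proposal is correct and follows essentially the same route as the paper. The size event from Lemma~\ref{lem:size-control} costs $2\delta/3$. The deterministic threshold $t_n=\psi(\log n,x_\delta)$, with $p=p_*(\log n,x_\delta)$ in Lemma~\ref{lem:bad-count} and Markov's inequality, costs $\delta/3$; monotonicity of $\psi$ then replaces $\log n$ by $\log N_n+\log A_\delta$, and the interior minimiser gives the rate, all exactly as in the paper's proof. Your explicit median-counting step, which the paper states without detail, is also correct.
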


\begin{proof}
On $N_n\ge2$, the failure probability of
$n/A_\delta\le N_n\le U_\delta n$ is at most $2\delta/3$ by
Lemma~\ref{lem:size-control}.  Put $t_n=\psi(\log n,x_\delta)$ and
$p_n^*=p_*(\log n,x_\delta)$.  Lemma~\ref{lem:bad-count} with $p=p_n^*$ gives
$\EE Z_n(t_n)\le nC_Ze^{-x_\delta}$.
If $|m_n-(\nu/2)\log n|>t_n$, then at least $N_n/2$ vertices are
counted by $Z_n(t_n)$.  Hence
\[
 \PP\{|m_n-(\nu/2)\log n|>t_n,\ N_n\ge n/A_\delta\}
 \le2A_\delta C_Ze^{-x_\delta}=\delta/3.
\]
On the complementary event,
\begin{align*}
 |T_D(G_n)-\nu|
 &\le\frac{2t_n+\nu|\log N_n-\log n|}{\log N_n}\\
 &\le\frac{2\psi(\log N_n+\log A_\delta,x_\delta)+\bar\nu b_\delta}{\log N_n}.
\end{align*}
Projection preserves this bound.  Eventually the minimizer of
\eqref{eq:chernoff-function} is interior, with
$\psi(L,x)=\sqrt{2\bar\nu Lx}$, which proves the radius assertion.
\end{proof}

\subsection{Final proof of Theorem~\ref{thm:single}}
\label{subsec:final-single}

Fix a GMC environment $\mu=\mu_\gamma$ for which
Lemmas~\ref{lem:correlation-limit} and \ref{lem:typical-degree} hold.
Lemma~\ref{lem:edge-fluctuations} gives
\[
 \frac{E_n}{e_n(\mu)}\longrightarrow1
 \quad\text{in conditional probability},
\]
while Lemma~\ref{lem:correlation-limit} gives
\begin{equation}
 \frac{\log e_n(\mu)}{\log n}
 =2+\frac{\log C_{r_n}}{\log n}+o(1)
 \longrightarrow2-\frac{\tau_\gamma}{2}.
 \label{eq:edge-log-limit}
\end{equation}
Since $2-\tau_\gamma/2>0$, $e_n(\mu)\to\infty$, and therefore
\[
 \frac{\log(E_n\vee1)}{\log n}
 \longrightarrow2-\frac{\tau_\gamma}{2}
 \quad\text{in conditional probability}.
\]
Also, by \eqref{eq:random-size},
\[
 \frac{N_n}{n}\longrightarrow M>0,
 \qquad
 \frac{\log(N_n\vee2)}{\log n}\longrightarrow1
 \quad\text{in conditional probability}.
\]
Hence
\[
 T_E^0(G_n)\longrightarrow1-\frac{\tau_\gamma}{2}=b(\nu).
\]

For every $\epsilon>0$, on $\{N_n\ge1\}$,
\begin{equation}
 \left\{\left|\frac{m_n}{\log n}-\frac\nu2\right|>\epsilon\right\}
 \subseteq\{Z_n(\epsilon\log n)\ge N_n/2\}.
 \label{eq:median-from-bad-fraction}
\end{equation}
Lemma~\ref{lem:typical-degree}, \eqref{eq:random-size}, and
\eqref{eq:median-from-bad-fraction} give
\[
 \frac{m_n}{\log n}\longrightarrow\frac\nu2
 \quad\text{in conditional probability}.
\]
Together with $\log(N_n\vee2)/\log n\to1$, this yields
\[
 T_D(G_n)=\frac{2m_n}{\log(N_n\vee2)}\longrightarrow\nu
 \quad\text{in conditional probability}.
\]
This proves \eqref{eq:main-limits}.

For the uniform bound, set
\[
 \epsilon_0=\min\{1,\bar\nu,4\bar\nu p_0\},\qquad
 w=\frac{\epsilon^2\log n}{64\bar\nu},\qquad
 t=\frac{\epsilon\log n}{4},\qquad
 p=\frac\epsilon{4\bar\nu}.
\]
On
$F=\{N_n\ge2,\ ne^{-w}\le N_n\le ne^w\}$, the event
$|m_n-(\nu/2)\log n|\le t$ implies
\begin{equation}
 |T_D(G_n)-\nu|
 \le\frac{\epsilon/2+\epsilon^2/64}
             {1-\epsilon^2/(64\bar\nu)}<\epsilon.
 \label{eq:median-conversion}
\end{equation}
The definition of $\epsilon_0$ ensures $p\le p_0$, a positive denominator,
and the final strict inequality.  Lemma~\ref{lem:size-control} gives
\[
 \PP(F^c)\le(1+2J_-)e^{-w}+\frac{2J_-}{n}.
\]
On $F$, a median deviation larger than $t$ implies
$Z_n(t)\ge ne^{-w}/2$.  Lemma~\ref{lem:bad-count} therefore gives
\begin{align*}
 \PP\{|m_n-(\nu/2)\log n|>t,F\}
 &\le2e^w C_Z
   \exp\{-pt+\bar\nu p^2\log n/2\}\\
 &=2C_Ze^{-w}.
\end{align*}
Since interval projection cannot increase the distance to
$\nu\in[0,\bar\nu]$,
\begin{equation}
 \{|\widehat\nu_n-\nu|>\epsilon\}
 \subseteq F^c\cup
 \{|m_n-(\nu/2)\log n|>t,\ F\}.
 \label{eq:tail-event-inclusion}
\end{equation}
Thus \eqref{eq:main-tail} holds with
\[
 A_1=1+2J_-+2C_Z,
 \qquad
 A_2=2J_-.
\]

Put $\Delta_n=|\widehat\nu_n-\nu|$ and
$b_0=\epsilon_0^2/(64\bar\nu)$.  Since $0\le\Delta_n\le\bar\nu$ and
\eqref{eq:main-tail} is available for $0<\epsilon\le\epsilon_0$, while
$\PP(\Delta_n>\epsilon)\le\PP(\Delta_n>\epsilon_0)$ for
$\epsilon\ge\epsilon_0$,
\begin{align}
 \EE\Delta_n^2
 &=2\int_0^{\epsilon_0}\epsilon\,\PP(\Delta_n>\epsilon)\,\dd\epsilon
   +2\int_{\epsilon_0}^{\bar\nu}\epsilon\,
       \PP(\Delta_n>\epsilon)\,\dd\epsilon \notag\\
 &\le \frac{64\bar\nu A_1}{\log n}
   +\bar\nu^2\left(A_1n^{-b_0}+\frac{A_2}{n}\right).
 \label{eq:risk-tail-integration}
\end{align}
Define
\begin{equation}
 A_3=64\bar\nu A_1
 +\bar\nu^2A_1\sup_{x\ge1}xe^{-b_0x}
 +\frac{\bar\nu^2A_2}{e}.
 \label{eq:risk-constant}
\end{equation}
For $n\ge e$, \eqref{eq:risk-tail-integration} gives
$\EE\Delta_n^2\le A_3/\log n$, proving \eqref{eq:main-risk}.

\section{Constructive calibration and parameter refinement}
\label{sec:calibration}

This section proves Theorem~\ref{thm:calibration} for the finite model
\eqref{eq:pixel-density} on a bounded domain with a fixed Euclidean or periodic distance.  Write
\[
 k_n(x,y)=\exp\{-n\operatorname{dist}(x,y)^2/c^2\},
\]
where $\operatorname{dist}$ denotes that fixed distance.
For a fixed statistic, the argument couples all parameters in a rectangle,
bounds the statistic on that rectangle, and compares the resulting box ranks
with the exact Monte Carlo rank at the true parameter.  The same envelope
construction is then used for the pilot-fitted residual score.

\subsection{Common nodes and edges over a parameter rectangle}
\label{subsec:common-graph}

The first step is a pathwise coupling of all graph laws $Q_{\theta,h}$ in one
parameter rectangle.  Fix a realization of the Gaussian vector $Y$ and let
\[
 B=[\gamma_-,\gamma_+]\times[n_B^-,n_B^+]\subset\Theta.
\]
For $x\in A_i$, define
\begin{equation}
 g_i(\gamma)=\gamma Y_i-\tfrac12\gamma^2v_i.
 \label{eq:log-intensity}
\end{equation}
Its minimum and maximum on $[\gamma_-,\gamma_+]$ are
\begin{align}
 g_{B,i}^-
 &=\min\{g_i(\gamma_-),g_i(\gamma_+)\},
 \label{eq:log-intensity-lower}\\
 \shortintertext{and}
 g_{B,i}^+
 &=
 \begin{cases}
 g_i\!\left(\Pi_{[\gamma_-,\gamma_+]}
              \left(Y_i/v_i\right)\right),&v_i>0,\\
 0,&v_i=0.
 \end{cases}
 \label{eq:log-intensity-upper}
\end{align}
Indeed, $g_i$ is concave, and $v_i=0$ implies $Y_i=0$ almost surely.  Put
\begin{equation}
 \lambda_B^\pm(x)=n_B^\pm e^{g_{B,i}^\pm},\qquad x\in A_i.
 \label{eq:intensity-envelope}
\end{equation}
Then, for every $(\gamma,n)\in B$,
\begin{equation}
 \lambda_B^-(x)\le n w_{\gamma,h}(x)\le\lambda_B^+(x).
 \label{eq:intensity-domination}
\end{equation}

Let
\begin{equation}
 \overline\lambda(x)=\sup_{\theta\in\Theta}n w_{\gamma,h}(x).
 \label{eq:global-mother-intensity}
\end{equation}
Because the partition is finite and $\Theta$ is compact,
$\int_D\overline\lambda(x)\,\dd x<\infty$ for every realized $Y$.
Let $\mathcal P$ be a unit-rate Poisson process on $D\times[0,\infty)$,
independent of $Y$, and define the global mother set
\begin{equation}
 \mathcal V=\{(x,s)\in\mathcal P:s\le\overline\lambda(x)\}
 =\{(x_i,s_i):i\in I\}.
 \label{eq:global-mother-set}
\end{equation}
Thus $|I|<\infty$ almost surely.  Assign $U_{ij}\sim\mathrm{Unif}(0,1)$ independently over the unordered pairs
$\{i,j\}\subset I$ and independently of $(Y,\mathcal P)$.
For $\theta=(\gamma,n)$ set
\begin{align}
 V_\theta
 &=\{i\in I:s_i\le n w_{\gamma,h}(x_i)\},
 \label{eq:parameter-node-set}\\
 E_\theta
 &=\bigl\{\{i,j\}\subset V_\theta:
          U_{ij}\le k_n(x_i,x_j)\bigr\},
 \qquad G_\theta=(V_\theta,E_\theta).
 \label{eq:parameter-edge-set}
\end{align}
For the rectangle $B$, define
\begin{align}
 V_B^\pm
 &=\{i\in I:s_i\le\lambda_B^\pm(x_i)\},
 \label{eq:node-envelopes}\\
 E_B^-
 &=\bigl\{\{i,j\}\subset V_B^-:
          U_{ij}\le k_{n_B^+}(x_i,x_j)\bigr\},
 \label{eq:edge-envelope-lower}\\
 \shortintertext{and}
 E_B^+
 &=\bigl\{\{i,j\}\subset V_B^+:
          U_{ij}\le k_{n_B^-}(x_i,x_j)\bigr\},
 \qquad G_B^\pm=(V_B^\pm,E_B^\pm).
 \label{eq:edge-envelope-upper}
\end{align}

\begin{lemma}
\label{lem:graph-envelope}
Under the common construction above, simultaneously for every $\theta\in B$,
\begin{equation}
 V_B^-\subseteq V_\theta\subseteq V_B^+,
 \qquad
 G_B^-\subseteq G_\theta\subseteq G_B^+.
 \label{eq:graph-envelope}
\end{equation}
Moreover, $G_\theta$ has law $Q_{\theta,h}$.
\end{lemma}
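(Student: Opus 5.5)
The proof splits into a deterministic monotonicity argument for the sandwich \eqref{eq:graph-envelope} and a distributional argument, via the mapping and thinning theorems for Poisson processes, for $\Law(G_\theta)=Q_{\theta,h}$.

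\textbf{Node inclusions.} Fix a realization of $(Y,\mathcal P,(U_{ij}))$ and $\theta=(\gamma,n)\in B$. By \eqref{eq:intensity-domination}, $\lambda_B^-(x)\le nw_{\gamma,h}(x)\le\lambda_B^+(x)$ for every $x$. This inequality rests on two facts. First, $g_{B,i}^\pm$ are the minimum and maximum of the concave function $g_i$ on $[\gamma_-,\gamma_+]$: a concave function attains its minimum at an endpoint, and its maximum at the projection of its unconstrained maximizer $Y_i/v_i$. Second, $n_B^-\le n\le n_B^+$ and $e^{g}>0$. Since every $i\in I$ already satisfies $s_i\le\overline\lambda(x_i)$, the defining conditions in \eqref{eq:node-envelopes} and \eqref{eq:parameter-node-set} are threshold conditions on the same mark $s_i$ with ordered thresholds. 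This gives $V_B^-\subseteq V_\theta\subseteq V_B^+$ immediately. One also needs $\lambda_B^+\le\overline\lambda$, so that $V_B^+$ is well defined inside $I$; this holds because $\lambda_B^+$ is itself attained at a parameter in $B\subset\Theta$.

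\textbf{Edge inclusions.} The kernel $k_n(x,y)=\exp\{-n\operatorname{dist}(x,y)^2/c^2\}$ is nonincreasing in $n$, so $k_{n_B^+}\le k_n\le k_{n_B^-}$ pointwise. Take $\{i,j\}\in E_B^-$. Then $i,j\in V_B^-\subseteq V_\theta$ and $U_{ij}\le k_{n_B^+}(x_i,x_j)\le k_n(x_i,x_j)$, so $\{i,j\}\in E_\theta$. The argument for $E_\theta\subseteq E_B^+$ is symmetric. The uniforms are shared across all parameters, so both inclusions hold simultaneously for all $\theta\in B$ on a single sample path.

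\textbf{Law of $G_\theta$.} Condition on $Y$. The restriction of $\mathcal P$ to $\{(x,s):s\le nw_{\gamma,h}(x)\}$ is a Poisson process on that region, because restrictions of Poisson processes to measurable sets are Poisson. Projecting to the first coordinate then gives a Poisson process on $D$ with intensity $nw_{\gamma,h}(x)\dd x$; this is the mapping theorem, applicable since the region has finite measure $\int nw_{\gamma,h}\,\dd x\le\int\overline\lambda\,\dd x<\infty$. The process is unaffected by the other points of $\mathcal V$ because $nw_{\gamma,h}\le\overline\lambda$.

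Conditional on $(Y,\mathcal P)$, the variables $U_{ij}$ are i.i.d.\ uniform and independent of the positions. Hence each pair $\{i,j\}\subset V_\theta$ is connected independently with probability $k_n(x_i,x_j)$. Forgetting labels, $G_\theta$ has the conditional law of the model given $Y$; integrating over $Y$ gives $Q_{\theta,h}$.

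The only delicate point is measurability. The inclusions hold on a single sample path, so no exceptional null set may depend on $\theta$, and the argument above is pathwise. The law statement, by contrast, is pointwise in $\theta$, so no uniformity over $\theta$ is needed there.
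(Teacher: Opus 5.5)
Your proposal is correct and follows the paper's proof. The law of $G_\theta$ comes from Poisson thinning of the common mother process together with the independent pair marks, and the sandwich follows by applying \eqref{eq:intensity-domination} and the monotonicity of $k_n$ in $n$ to the same heights $s_i$ and the same marks $U_{ij}$; your extra details, the mapping theorem and the check $\lambda_B^+\le\overline\lambda$, are sound, and the only point you leave implicit is the case $v_i=0$, where $g_i\equiv0$ holds off a single $\theta$-independent null event, as the paper notes.
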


\begin{proof}
Poisson thinning in \eqref{eq:parameter-node-set} gives intensity
$n w_{\gamma,h}(x)\,\dd x$, and the independent pair marks give conditional
edge probabilities $k_n$.  Hence $G_\theta\sim Q_{\theta,h}$.  For
$\theta\in B$, \eqref{eq:intensity-domination} and the monotonicity of $k_n$
in $n$ give
\[
 \lambda_B^-(x)\le n w_{\gamma,h}(x)\le\lambda_B^+(x),
 \qquad
 k_{n_B^+}(x,y)\le k_n(x,y)\le k_{n_B^-}(x,y).
\]
Applying these inequalities to the same Poisson heights and pair marks proves
\eqref{eq:graph-envelope}.
\end{proof}

\subsection{Envelopes for a nonmonotone quantile statistic}
\label{subsec:quantile-envelope}

Graph inclusion does not by itself order $T_D$: adding vertices can lower the
lower median of the degrees.  We therefore bound every possible retained
degree first and only then take the quantile.  For $i\in V_B^+$ define
\begin{align}
 d_{B,i}^-
 &=\#\{j\in V_B^-\setminus\{i\}:
          U_{ij}\le k_{n_B^+}(x_i,x_j)\},
 \label{eq:degree-lower}\\
 \shortintertext{and}
 d_{B,i}^+
 &=\#\{j\in V_B^+\setminus\{i\}:
          U_{ij}\le k_{n_B^-}(x_i,x_j)\}.
 \label{eq:degree-upper}
\end{align}
If $i\in V_\theta$ and $d_i(\theta)=\deg_{G_\theta}(i)$, then
\begin{equation}
 d_{B,i}^-\le d_i(\theta)\le d_{B,i}^+,
 \qquad \theta\in B.
 \label{eq:degree-envelope-pointwise}
\end{equation}
Call the vertices in $V_B^-$ mandatory and those in
$O_B=V_B^+\setminus V_B^-$ optional.  Set
\begin{equation}
 N_B^-=|V_B^-|,\qquad N_B^+=|V_B^+|.
 \label{eq:mandatory-optional}
\end{equation}
For $N_B^-\ge2$, define
\begin{align}
 \underline d_B
 &=\min\Bigl\{k\ge0:
      2\#\{i\in V_B^-:d_{B,i}^-\le k\}
      +\#\{i\in O_B:d_{B,i}^-\le k\}\ge N_B^-\Bigr\},
 \label{eq:median-lower}\\
 \shortintertext{and}
 \overline d_B
 &=\max\Bigl\{0\le k\le N_B^+-1:
      2\#\{i\in V_B^-:d_{B,i}^+\ge k\}
      +\#\{i\in O_B:d_{B,i}^+\ge k\}>N_B^-\Bigr\}.
 \label{eq:median-upper}
\end{align}
For $N_B^-\ge2$, set
\begin{align}
 L_B^D&=\Pi_{[0,2]}\frac{2\log(1+\underline d_B)}{\log N_B^+},&
 U_B^D&=\Pi_{[0,2]}\frac{2\log(1+\overline d_B)}{\log N_B^-},
 \label{eq:degree-stat-envelope}\\
 L_B^E&=\Pi_{[0,1]}\left(\frac{\log(|E_B^-|\vee1)}{\log N_B^+}-1\right),&
 U_B^E&=\Pi_{[0,1]}\left(\frac{\log(|E_B^+|\vee1)}{\log N_B^-}-1\right).
 \label{eq:edge-stat-envelope}
\end{align}
For the remaining sizes, define
\begin{equation}
 (L_B^D,U_B^D,L_B^E,U_B^E)=
 \begin{cases}
 (0,0,0,0),&N_B^+\le1,\\
 (0,2,0,1),&N_B^-\le1<N_B^+.
 \end{cases}
 \label{eq:small-size-envelopes}
\end{equation}

\begin{lemma}
\label{lem:quantile-envelope}
For every $\theta\in B$,
\begin{equation}
 L_B^D\le T_D(G_\theta)\le U_B^D,
 \qquad
 L_B^E\le T_E(G_\theta)\le U_B^E.
 \label{eq:stat-envelope}
\end{equation}
\end{lemma}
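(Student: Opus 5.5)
The plan is to derive everything from Lemma~\ref{lem:graph-envelope} and the pointwise degree bounds \eqref{eq:degree-envelope-pointwise}. The median bounds come from a counting argument on mandatory and optional vertices. The remaining steps are monotonicity of $\log$, of division by $\log N$, and of interval projection.

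Fix $\theta\in B$ and write $N=|V_\theta|$ and $o=|V_\theta\cap O_B|$, so that $N=N_B^-+o$ and $N_B^-\le N\le N_B^+$. First I dispose of the small sizes in \eqref{eq:small-size-envelopes}.
\begin{enumerate}
\item If $N_B^+\le1$, then $N\le1$, so $E=0$. Also $\med_v d_v=0$, whether $N=0$ or the single vertex has degree $0$. Hence $T_D=0$ and $T_E=\Pi_{[0,1]}(0-1)=0$.
\item If $N_B^-\le1<N_B^+$, the claimed intervals $[0,2]$ and $[0,1]$ are exactly the ranges of $T_D$ and $T_E$.
\end{enumerate}
So assume $N_B^-\ge2$. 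Then $N\ge2$, and hence $\log(N\vee2)=\log N\in[\log N_B^-,\log N_B^+]$.

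For the lower median bound, I show that $\med_v d_v(\theta)\ge\underline d_B$. The set in \eqref{eq:median-lower} is nonempty, because for $k$ large the left side is at least $2N_B^-$. Take $k<\underline d_B$, and let $a$ be the number of mandatory vertices with $d_{B,i}^-\le k$ and $b$ the number of optional vertices of $V_\theta$ with $d_{B,i}^-\le k$. By \eqref{eq:degree-envelope-pointwise},
\[
2\#\{i\in V_\theta:d_i(\theta)\le k\}\le2a+2b=(2a+b)+b<N_B^-+o=N .
\]
The strict inequality holds because the defining count in \eqref{eq:median-lower} at $k$ is at least $2a+b$ and fails to reach $N_B^-$, and because $b\le o$. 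So fewer than $\lceil N/2\rceil$ degrees are $\le k$, and therefore $d_{(\lceil N/2\rceil)}>k$.

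For the upper median bound, put $k=\med_v d_v(\theta)$. Then $\#\{i\in V_\theta:d_i\ge k\}\ge N-\lceil N/2\rceil+1>N/2$. Let $a',b'$ count mandatory vertices and optional vertices of $V_\theta$ with $d_{B,i}^+\ge k$. Then $2(a'+b')>N=N_B^-+o$, which gives
\[
2a'+b'>N_B^-+(o-b')\ge N_B^- .
\]
Since $k\le N-1\le N_B^+-1$, this $k$ belongs to the set in \eqref{eq:median-upper}, so $k\le\overline d_B$.

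The remaining step combines these bounds:
\[
\frac{2\log(1+\underline d_B)}{\log N_B^+}\le T_D(G_\theta)\le\frac{2\log(1+\overline d_B)}{\log N_B^-}.
\]
This uses nonnegative numerators and $\log N_B^-\le\log N\le\log N_B^+$. Because $T_D\in[0,2]$ and $\Pi_{[0,2]}$ is monotone, projecting gives the bound on $T_D$. For the edge statistic, $E_B^-\subseteq E_\theta\subseteq E_B^+$ by \eqref{eq:graph-envelope}. Since $\log(\cdot\vee1)\ge0$, the same denominator argument bounds $T_E^0$, and the monotone projection $\Pi_{[0,1]}$ gives the bound on $T_E$.

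The main obstacle is the median counting step, since $T_D$ is not monotone under graph inclusion. The weights in \eqref{eq:median-lower}--\eqref{eq:median-upper} (weight $2$ for mandatory vertices, $1$ for optional ones) are designed so that the unknown number $o$ of optional vertices in $V_\theta$ cancels. I expect the only care needed to be the lower-median index conventions and the range condition $k\le N_B^+-1$.
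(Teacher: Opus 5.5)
Your proof is correct and follows the paper's argument. The paper uses the same weighted count (weight $2$ for mandatory vertices, $1$ for optional ones), followed by monotonicity of the logarithms, the denominators $\log N_B^\mp$, and the interval projections. The only difference is that you apply the count directly to $V_\theta$ with the true degrees, proving only the one-sided implication the lemma needs. The paper instead first proves two-sided equivalences identifying $\underline d_B$ and $\overline d_B$ as exact extrema of the median over all admissible sets $V_B^-\subseteq V\subseteq V_B^+$, and then specializes to $V=V_\theta$.
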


\begin{proof}
Assume first that $N_B^-\ge2$.  For a candidate $k$, adding every optional
vertex whose lower degree bound is at most $k$ and no other optional vertex is
the most favorable choice for making the lower median at most $k$.  Hence
\begin{align}
 &\exists V:\ V_B^-\subseteq V\subseteq V_B^+:\
 \med_{i\in V} d_{B,i}^-\le k
 \notag\\
 &\qquad\Longleftrightarrow\quad
 2\#\{i\in V_B^-:d_{B,i}^-\le k\}
 +\#\{i\in O_B:d_{B,i}^-\le k\}\ge N_B^-.
 \label{eq:lower-median-count}
\end{align}
Similarly, retaining exactly the optional vertices whose upper degree bound is
at least $k$ is the most favorable choice for making the lower median at least
$k$, and
\begin{align}
 &\exists V:\ V_B^-\subseteq V\subseteq V_B^+:\
 \med_{i\in V} d_{B,i}^+\ge k
 \notag\\
 &\qquad\Longleftrightarrow\quad
 2\#\{i\in V_B^-:d_{B,i}^+\ge k\}
 +\#\{i\in O_B:d_{B,i}^+\ge k\}>N_B^-.
 \label{eq:upper-median-count}
\end{align}
Thus \eqref{eq:median-lower}--\eqref{eq:median-upper} are equivalently
\begin{equation}
 \underline d_B
 =\min_{V_B^-\subseteq V\subseteq V_B^+}\med_{i\in V}d_{B,i}^-,
 \qquad
 \overline d_B
 =\max_{V_B^-\subseteq V\subseteq V_B^+}\med_{i\in V}d_{B,i}^+.
 \label{eq:median-extrema-over-subsets}
\end{equation}
Taking $V=V_\theta$ in \eqref{eq:median-extrema-over-subsets} and using
\eqref{eq:degree-envelope-pointwise} gives
\begin{equation}
 \underline d_B\le \med_{i\in V_\theta} d_i(\theta)\le \overline d_B.
 \label{eq:median-envelope-proof}
\end{equation}
The common graph coupling also gives
\begin{equation}
 N_B^-\le N(G_\theta)\le N_B^+,
 \qquad
 |E_B^-|\le E(G_\theta)\le |E_B^+|.
 \label{eq:size-edge-envelope-proof}
\end{equation}
Equations~\eqref{eq:median-envelope-proof}--\eqref{eq:size-edge-envelope-proof},
together with the monotonicity of the logarithms and interval projection,
give \eqref{eq:stat-envelope}.  The cases $N_B^-\le1$ follow directly from
\eqref{eq:small-size-envelopes} and the ranges $0\le T_D\le2$ and
$0\le T_E\le1$.
\end{proof}

For the fixed statistic $T$, write $(L_{B,r},U_{B,r})$ for the corresponding
degree or edge envelope in replica $r$.  The optional subset attaining an
envelope need not arise from one common parameter value; only the outer bounds
are used below.

\subsection{A common simulation library for the entire search}
\label{subsec:common-ranks}

The statistic envelopes become inferentially useful by comparison with the
exact, generally uncomputed, Monte Carlo rank at the true parameter.  For
$r=1,\ldots,R$, take independent copies of the common Gaussian, Poisson, and
edge-mark construction; all rectangles within one copy reuse the same random
objects.  The observed graph is independent of these copies.  Put $T_0=T(G)$
and, for fixed $\theta$,
\begin{align}
 T_r(\theta)&=T(G_\theta^{(r)}),&
 q_\theta^\pm
 &=\frac{1+\sum_{r=1}^R
          \one\{\pm T_r(\theta)\ge\pm T_0\}}{R+1},
 \label{eq:true-parameter-rank-one-sided}\\
 \shortintertext{and}
 q_\theta&=\min\{1,2\min(q_\theta^+,q_\theta^-)\}.
 \label{eq:true-parameter-rank}
\end{align}

\begin{lemma}
\label{lem:rank-domination}
For every rectangle $B\ni\theta$,
\begin{equation}
 p_B^\pm(T_0)\ge q_\theta^\pm,
 \qquad
 p_B(T_0)\ge q_\theta.
 \label{eq:rank-domination}
\end{equation}
Under the joint law $Q_{\theta,h}\otimes P_h^{\rm aux}$,
\begin{equation}
 (Q_{\theta,h}\otimes P_h^{\rm aux})\{q_\theta^\pm\le u\}\le u,
 \qquad
 (Q_{\theta,h}\otimes P_h^{\rm aux})\{q_\theta\le u\}\le u,
 \qquad 0\le u\le1.
 \label{eq:true-rank-validity}
\end{equation}
\end{lemma}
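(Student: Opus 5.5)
The proof has two parts: a deterministic comparison between the box $p$-values and the exact ranks at $\theta$, and a probabilistic statement that reduces to Proposition~\ref{prop:rank}.

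\emph{Deterministic comparison.} By Lemma~\ref{lem:graph-envelope} and Lemma~\ref{lem:quantile-envelope}, applied in each replica $r$ to the same common construction, every $\theta\in B$ satisfies $L_{B,r}\le T_r(\theta)\le U_{B,r}$ pointwise. This gives two indicator inequalities:
\[
 \one\{T_r(\theta)\ge T_0\}\le\one\{U_{B,r}\ge T_0\},\qquad
 \one\{T_r(\theta)\le T_0\}\le\one\{L_{B,r}\le T_0\}.
\]
Summing over $r$ and comparing \eqref{eq:box-pvalues} with \eqref{eq:true-parameter-rank-one-sided} yields $p_B^\pm(T_0)\ge q_\theta^\pm$. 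The map $(a,b)\mapsto\min\{1,2\min(a,b)\}$ is coordinatewise nondecreasing, so $p_B(T_0)\ge q_\theta$ follows.

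\emph{Exchangeability.} Fix $\theta$. By Lemma~\ref{lem:graph-envelope}, each $G_\theta^{(r)}$ has law $Q_{\theta,h}$, because the $r$th replica is an independent copy of the common Gaussian, Poisson and edge-mark construction. These replicas are independent across $r$ and independent of the observed graph $G\sim Q_{\theta,h}$. Hence $T_0,T_1(\theta),\dots,T_R(\theta)$ are i.i.d.\ under $Q_{\theta,h}\otimes P_h^{\rm aux}$, and in particular exchangeable.

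The one point needing care is that $\theta$ must be fixed. The coupled replicas are reused across all parameters, but at a single fixed $\theta$ the marginal family $(T_r(\theta))_r$ is i.i.d. Measurability of $T(G_\theta^{(r)})$ is also needed. This holds because $V_\theta$ and $E_\theta$ are determined by finitely many random heights and marks through measurable inequalities, and $T$ is a function of a finite graph.

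\emph{One-sided bounds.} Proposition~\ref{prop:rank} then gives $\Pr(q_\theta^\pm\le u)\le u$ for $0\le u\le1$. This holds with ties included, since \eqref{eq:exact-ranks} counts ties as in \eqref{eq:true-parameter-rank-one-sided}.

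\emph{Two-sided bound.} For $u<1$, the event $\{q_\theta\le u\}$ equals $\{\min(q_\theta^+,q_\theta^-)\le u/2\}$. A union bound gives
\[
 \Pr(q_\theta\le u)\le\Pr(q_\theta^+\le u/2)+\Pr(q_\theta^-\le u/2)\le u,
\]
and the case $u=1$ is trivial.

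The main obstacle is not analytic. It is making sure that the envelope inequality holds simultaneously in every replica and pointwise in the auxiliary randomness. That is exactly what the common-mark coupling provides, so the comparison requires no probabilistic argument.
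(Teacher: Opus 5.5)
Your proposal is correct and follows the paper's proof essentially step by step. It uses the envelope-induced indicator inequalities with coordinatewise monotonicity of $(x,y)\mapsto\min\{1,2\min(x,y)\}$, then the i.i.d.\ structure of $T_0,T_1(\theta),\ldots,T_R(\theta)$ at the fixed $\theta$, Proposition~\ref{prop:rank}, and a union bound at level $u/2$. Your separate handling of $u=1$ is a harmless extra bit of care.
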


\begin{proof}
For every replica $r$ and every $B\ni\theta$,
\begin{align}
 \one\{U_{B,r}\ge T_0\}
 &\ge\one\{T_r(\theta)\ge T_0\},
 \label{eq:upper-rank-indicator}\\
 \one\{L_{B,r}\le T_0\}
 &\ge\one\{T_r(\theta)\le T_0\}.
 \label{eq:lower-rank-indicator}
\end{align}
Thus $p_B^\pm(T_0)\ge q_\theta^\pm$, and coordinatewise monotonicity of
$(x,y)\mapsto\min\{1,2\min(x,y)\}$ gives the second inequality in
\eqref{eq:rank-domination}.

Under $Q_{\theta,h}\otimes P_h^{\rm aux}$, the variables
$T_0,T_1(\theta),\ldots,T_R(\theta)$ are independent and identically
distributed.  Proposition~\ref{prop:rank} gives the first bound in
\eqref{eq:true-rank-validity}.  Moreover, for $0\le u\le1$,
\begin{align}
 (Q_{\theta,h}\otimes P_h^{\rm aux})\{q_\theta\le u\}
 &\le (Q_{\theta,h}\otimes P_h^{\rm aux})\{q_\theta^+\le u/2\}
      +(Q_{\theta,h}\otimes P_h^{\rm aux})\{q_\theta^-\le u/2\}
 \notag\\
 &\le u.
 \label{eq:two-sided-rank-validity}
\end{align}
\end{proof}

Thus one event, $\{q_\theta\le\alpha\}$, controls every box containing the
true parameter, even when the boxes are selected adaptively.

\subsection{Refinement, projection, and stopping}
\label{subsec:refinement}

We now turn the boxwise domination in Lemma~\ref{lem:rank-domination} into an
adaptive search.  Fix a countable rectangle tree.  A split replaces a parent
by finitely many children whose union is the parent.  For every child
$B'\subset B$, intersect its newly computed bounds with the bounds inherited
from each ancestor by repeated updates
\begin{equation}
 L_{B',r}\leftarrow\max\{L_{B',r},L_{B,r}\},
 \qquad
 U_{B',r}\leftarrow\min\{U_{B',r},U_{B,r}\}.
 \label{eq:ancestor-intersection}
\end{equation}
Every updated interval still contains $T_r(\theta)$ for all $\theta\in B'$.
Consequently,
\begin{equation}
 p_{B'}^\pm(t)\le p_B^\pm(t),
 \qquad
 p_{B'}(t)\le p_B(t),
 \qquad B'\subset B,\quad t\in\R.
 \label{eq:refinement-monotonicity}
\end{equation}
A split preserves the current union before deletions, whereas discarding a box
can only shrink it.  Hence the retained sets satisfy
\begin{equation}
 \Theta_{j+1}\subseteq \Theta_j.
 \label{eq:retained-set-monotonicity}
\end{equation}
Unprocessed rectangles remain in $\Theta_j$, so the computation may stop at any
prescribed depth, work budget, or certified parameter statement.

For projection to the target parameter, define
\begin{equation}
 \Theta_j^\nu=\{\gamma^2/2:(\gamma,n)\in\Theta_j\}.
 \label{eq:nu-projection}
\end{equation}
Its smallest containing interval may be reported together with $\Theta_j^\nu$;
an empty set remains empty.

For a composite null region $\Theta_0\subset\Theta$, let $\mathcal B_0$ be a
finite rectangle cover.  At each level let $\mathcal B_j$ be a finite cover of
$\Theta_0$ obtained from $\mathcal B_0$ by adaptive splits using the fixed mother
simulations, and define
\begin{equation}
 p_{\Theta_0,j}^+(T_0)
 =\max_{B\in\mathcal B_j}p_B^+(T_0).
 \label{eq:composite-pvalue}
\end{equation}
Equation~\eqref{eq:refinement-monotonicity} gives
\begin{equation}
 p_{\Theta_0,j+1}^+(T_0)\le p_{\Theta_0,j}^+(T_0).
 \label{eq:composite-pvalue-monotonicity}
\end{equation}
Rectangles crossing the boundary of $\Theta_0$ only enlarge the null region
and therefore remain conservative.

\begin{corollary}
\label{cor:composite}
For every such adaptive refinement,
\begin{equation}
 \sup_{\theta\in\Theta_0}
 (Q_{\theta,h}\otimes P_h^{\rm aux})
 \left\{\exists j\ge0:
 p_{\Theta_0,j}^+(T_0)\le\alpha\right\}
 \le\alpha.
 \label{eq:composite-level}
\end{equation}
\end{corollary}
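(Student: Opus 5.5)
The plan is to reduce everything to the single true-parameter rank event $\{q_\theta^+\le\alpha\}$ from Lemma~\ref{lem:rank-domination}, exactly as in the proof of part (i) of Theorem~\ref{thm:calibration}. Fix $\theta\in\Theta_0$. At every level $j$, $\mathcal B_j$ is a cover of $\Theta_0$, so some $B_j\in\mathcal B_j$ contains $\theta$. This holds even though the cover is chosen adaptively using the data and the fixed mother simulations, because the covering property holds pathwise. For that box,
\[
 p_{\Theta_0,j}^+(T_0)\ge p_{B_j}^+(T_0)\ge q_\theta^+,
\]
where the first inequality is the definition \eqref{eq:composite-pvalue} as a maximum, and the second is \eqref{eq:rank-domination}. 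Boxes crossing $\partial\Theta_0$ cause no difficulty, since the domination only requires $\theta\in B_j$.

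The key step is to note that the inclusion holds uniformly in $j$:
\[
 \{\exists j\ge0:\ p_{\Theta_0,j}^+(T_0)\le\alpha\}\subseteq\{q_\theta^+\le\alpha\}.
\]
The right-hand event does not depend on the refinement rule or on the stopping level. Its probability under $Q_{\theta,h}\otimes P_h^{\rm aux}$ is at most $\alpha$ by \eqref{eq:true-rank-validity}, because $T_0,T_1(\theta),\dots,T_R(\theta)$ are i.i.d. Here we use that the observed graph is independent of the $R$ coupled replicas and that $G_\theta^{(r)}\sim Q_{\theta,h}$ by Lemma~\ref{lem:graph-envelope}. Taking the supremum over $\theta\in\Theta_0$ then gives \eqref{eq:composite-level}. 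Measurability of the union over countably many $j$ is harmless, since the event is contained in a measurable event of small probability and the refinement rule is assumed measurable.

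The only point requiring care, which I expect to be the main (mild) obstacle, is justifying that the adaptive, data-dependent choice of $\mathcal B_j$ does not break the rank validity. The resolution is that the domination $p_{B}^+(T_0)\ge q_\theta^+$ holds simultaneously for every rectangle $B\ni\theta$ on the same sample path: the envelopes $U_{B,r}$ bound $T_r(\theta)$ for all $\theta\in B$, including after the ancestor intersection \eqref{eq:ancestor-intersection}. Hence no union bound over boxes or levels is needed. The monotonicity \eqref{eq:composite-pvalue-monotonicity} is not needed for validity; it only shows that the sequence of composite p-values is nonincreasing.
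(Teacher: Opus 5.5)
Your proof is correct and follows the paper's argument: you pick $B_j\ni\theta$ in each cover, chain $p_{\Theta_0,j}^+(T_0)\ge p_{B_j}^+(T_0)\ge q_\theta^+$ by the max definition and Lemma~\ref{lem:rank-domination}, and conclude from the level-uniform inclusion in $\{q_\theta^+\le\alpha\}$ together with \eqref{eq:true-rank-validity}. Your extra remarks on pathwise domination under adaptivity, ancestor intersection, and the irrelevance of monotonicity to validity are accurate.
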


\begin{proof}
Fix $\theta\in\Theta_0$.  Since $\mathcal B_j$ covers $\Theta_0$, choose
$B_j\in\mathcal B_j$ with $\theta\in B_j$.  Lemma~\ref{lem:rank-domination}
gives, for every $j$,
\begin{equation}
 p_{\Theta_0,j}^+(T_0)\ge p_{B_j}^+(T_0)\ge q_\theta^+.
 \label{eq:composite-rank-domination}
\end{equation}
Therefore
\[
 \left\{\exists j\ge0:p_{\Theta_0,j}^+(T_0)\le\alpha\right\}
 \subseteq\{q_\theta^+\le\alpha\},
\]
and \eqref{eq:true-rank-validity} proves \eqref{eq:composite-level}.
\end{proof}

\subsection{Local exactness of the envelopes}
\label{subsec:local-exactness}

Coverage uses only outer envelopes, but refinement is useful because their
extra width disappears locally at fixed simulator resolution.  Fix $h,R$ and a
deterministic $\theta\in\Theta$.

\begin{lemma}
\label{lem:local-exactness}
\[
 P_h^{\rm aux}\!\left\{\exists U\subset\Theta:\
 \theta\in U,\ U\text{ relatively open},\
 G_\vartheta^{(r)}=G_\theta^{(r)}
 \ \forall\vartheta\in U,\ 1\le r\le R\right\}=1.
\]
Consequently, for every nested sequence $B_k\ni\theta$ with
$B_{k+1}\subseteq B_k$ and $\diam B_k\to0$,
\begin{equation}
 P_h^{\rm aux}\!\left\{\exists k_0<\infty:\
 L_{B_k,r}=U_{B_k,r}=T_r(\theta)
 \ \forall k\ge k_0,\ 1\le r\le R\right\}=1.
 \label{eq:local-exactness}
\end{equation}
\end{lemma}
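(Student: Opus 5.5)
The plan is to work replica by replica on a full-probability event of $P_h^{\rm aux}$ and to show that the whole graph $G_\vartheta^{(r)}$ is locally constant in $\vartheta$ near $\theta$. Fix $r$ and a realization of $(Y,\mathcal P,(U_{ij}))$. By construction \eqref{eq:global-mother-set}, the mother set $I$ is almost surely finite. For $\vartheta=(\gamma',n')$, $G_\vartheta$ is determined by two families of indicators: the vertex indicators $\one\{s_i\le n'w_{\gamma',h}(x_i)\}$ for $i\in I$, and the edge indicators $\one\{U_{ij}\le k_{n'}(x_i,x_j)\}$ for pairs $\{i,j\}\subset I$. This is a finite collection. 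Each indicator compares a random mark with a continuous function of $\vartheta$: the map $(\gamma',n')\mapsto n'e^{\gamma'Y_k-\gamma'^2v_k/2}$ is continuous, and $n'\mapsto e^{-n'\operatorname{dist}(x_i,x_j)^2/c^2}$ is continuous. So each indicator is constant on a relatively open neighbourhood of $\theta$ provided the inequality at $\theta$ is strict.

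The key step is therefore to show that, almost surely, no tie occurs at the fixed deterministic $\theta$:
\[
 s_i\ne n w_{\gamma,h}(x_i)\ \forall i\in I,
 \qquad
 U_{ij}\ne k_n(x_i,x_j)\ \forall\{i,j\}\subset I.
\]
For the vertex ties, condition on $Y$; then $\mathcal P$ is a unit-rate Poisson process on $D\times[0,\infty)$. The graph of the function $x\mapsto nw_{\gamma,h}(x)$ is a Lebesgue-null subset of $D\times[0,\infty)$, so by Mecke's identity \eqref{eq:mecke} the expected number of Poisson points on it is zero. For the edge ties, condition on $(Y,\mathcal P)$, which fixes the finitely many values $k_n(x_i,x_j)$. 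The $U_{ij}$ are independent uniforms, so each equality has probability zero, and a countable union over the finite index set still has probability zero. Both statements hold for each $r$, hence for all $1\le r\le R$ at once. I expect this tie-exclusion to be the only delicate point. It uses essentially that $\theta$ is deterministic: for random or data-dependent $\theta$ ties would occur at some parameter, which is why the lemma is stated for fixed $\theta$.

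On this event, take $U$ to be the intersection over $r$ and over the finitely many indicators of the open neighbourhoods where each indicator is constant. This gives $G_\vartheta^{(r)}=G_\theta^{(r)}$ for all $\vartheta\in U$, which proves the first display. For \eqref{eq:local-exactness}, the condition $\diam B_k\to0$ with $\theta\in B_k$ gives $B_k\subset U$ for all $k\ge k_0$.

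One more step is needed, because the envelopes are built from the corner quantities $\lambda^\pm_{B_k}$ and $k_{n_{B_k}^\mp}$, whose values need not be attained at any single $\vartheta\in B_k$. Still, $g^\pm_{B_k,i}$ are the min and max of $g_i$ over $[\gamma_-,\gamma_+]$ and converge to $g_i(\gamma)$, and likewise $n_{B_k}^\pm\to n$. By the same strict-inequality argument, for $k$ large every vertex and edge indicator evaluated at the envelope intensities $\lambda^\pm_{B_k}$ and kernels $k_{n^\mp_{B_k}}$ agrees with its value at $\theta$. Hence
\[
 V_{B_k}^-=V_{B_k}^+=V_\theta,
 \qquad
 E_{B_k}^-=E_{B_k}^+=E_\theta,
 \qquad
 d^-_{B_k,i}=d^+_{B_k,i}=d_i(\theta).
\]
With no optional vertices, \eqref{eq:median-lower}--\eqref{eq:median-upper} both reduce to the lower median of the common degrees. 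Therefore $L^D_{B_k,r}=U^D_{B_k,r}=T_D(G^{(r)}_\theta)$, and similarly for the edge statistic, also in the small-size cases \eqref{eq:small-size-envelopes}. The ancestor intersection \eqref{eq:ancestor-intersection} can only tighten the bounds, and by Lemma~\ref{lem:quantile-envelope} they always bracket $T_r(\theta)$, so equality persists after the updates.
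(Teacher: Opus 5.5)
Your proposal is correct and follows essentially the same route as the paper. It uses the same ingredients: almost surely strict node and edge comparisons at the deterministic $\theta$ among finitely many mother points, and continuity of the thresholds, which gives a neighborhood on which every replica graph is constant. It then uses convergence of the envelope thresholds $\lambda^\pm_{B_k}$ and $k_{n^\mp_{B_k}}$ to force $G^-_{B_k}=G_\theta=G^+_{B_k}$, after which the median, edge, and small-size envelope formulas are exact. Your explicit tie-exclusion via the null graph of $x\mapsto nw_{\gamma,h}(x)$, and your remark that the ancestor updates \eqref{eq:ancestor-intersection} preserve the equality, are useful details that the paper leaves implicit.
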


\begin{proof}
Conditional on the $R$ finite Gaussian vectors, all global mother sets are
finite.  At the fixed deterministic $\theta$, a mother-node height equals its
retention threshold with probability zero, and an independent uniform edge
mark equals its edge threshold with probability zero.  Since only finitely
many node and edge comparisons occur, almost surely every comparison is
strict.  Their threshold functions are continuous in $(\gamma,n)$, so all
comparison signs are unchanged on some relative neighborhood of $\theta$.
Thus every simulated graph is constant on that neighborhood.

For $B_k\ni\theta$ with $\diam B_k\to0$,
\begin{equation}
 \lambda_{B_k}^\pm(x)\longrightarrow n w_{\gamma,h}(x),
 \qquad
 k_{n_{B_k}^\pm}(x,y)\longrightarrow k_n(x,y).
 \label{eq:shrinking-envelope-thresholds}
\end{equation}
The strict comparisons above therefore imply that, for all sufficiently large
$k$, every mother node and every possible edge has the same status in the two
envelope graphs and in $G_\theta$.  Hence $G_{B_k}^-=G_\theta=G_{B_k}^+$.
If the common graph has at least two vertices,
\eqref{eq:median-lower}--\eqref{eq:edge-stat-envelope} are then exact; if it
has zero or one vertex, \eqref{eq:small-size-envelopes} is exact.  This proves
\eqref{eq:local-exactness}.
\end{proof}

\subsection{Joint size--degree residual calibration}
\label{subsec:residual}

Part~(ii) of Theorem~\ref{thm:calibration} uses an independent pilot sample to
freeze a boxwise score and a fresh simulation library to calibrate that score.
Write $L_N=\log(N\vee2)$ for the first coordinate of $S(G)$.  The random
size $N$ varies with the total environment mass in \eqref{eq:random-size}, so
$L_N$ can explain part of the variation in $T_D(G)$.  If $L_N$ and $T_D(G)$
are square-integrable and $\Var(L_N)>0$, then, for every $\xi\in\R$,
\begin{equation}
 \Var\bigl(T_D(G)-\xi L_N\bigr)
 =\Var\bigl(T_D(G)\bigr)
  -\frac{\Cov(T_D(G),L_N)^2}{\Var(L_N)}
  +\Var(L_N)\left(\xi-\frac{\Cov(T_D(G),L_N)}{\Var(L_N)}\right)^2.
 \label{eq:linear-residual-identity}
\end{equation}
This motivates the affine residual direction in \eqref{eq:residual-score}; its
full finite-sample distribution is calibrated rather than approximated by a
Gaussian law.  The separation of score construction and calibration follows
the simulator-based inference framework of \cite[Sections~3.2--3.4]{LF2I}.

Fix the finite partition $\mathcal B$ and draw $R_0\ge2$ independent pilot
mother simulations.  For each $B\in\mathcal B$, let
$[\ell^0_{B,r},u^0_{B,r}]$ be the pilot envelope for $S$ and define
\begin{align}
 z^0_{B,r}&=\tfrac12(\ell^0_{B,r}+u^0_{B,r}),&
 m_B&=\frac1{R_0}\sum_{r=1}^{R_0}z^0_{B,r},
 \label{eq:pilot-center}\\
 \shortintertext{and}
 \widehat\Sigma_B&=\frac1{R_0-1}\sum_{r=1}^{R_0}
       (z^0_{B,r}-m_B)(z^0_{B,r}-m_B)^\top.
 \label{eq:pilot-covariance}
\end{align}
Choose $0<\lambda<1$, $\varepsilon>0$, and $e_1,e_2>0$ before sampling, and set
\begin{equation}
 \Sigma_B=(1-\lambda)\widehat\Sigma_B
  +\lambda\diag\bigl(\max\{(\widehat\Sigma_B)_{11},e_1\},
                     \max\{(\widehat\Sigma_B)_{22},e_2\}\bigr)+\varepsilon I_2.
 \label{eq:pilot-regularization}
\end{equation}
Then
\begin{equation}
 \Sigma_B\succeq\varepsilon I_2
 \quad\Longrightarrow\quad
 (\Sigma_B)_{11}>0,\qquad \sigma_B^2>0,
 \label{eq:pilot-positive-definite}
\end{equation}
so \eqref{eq:residual-coefficients}--\eqref{eq:residual-score} define
$(a_B,\rho_B)$ for every pilot realization.

For a fresh calibration replica $r$, denote its envelope quantities by
$N_{B,r}^\pm$, $L_{B,r}^D$, and $U_{B,r}^D$.  Then
\begin{equation}
 \ell_{B,r}=
 \begin{pmatrix}\log(N_{B,r}^-\vee2)\\L_{B,r}^D\end{pmatrix},
 \qquad
 u_{B,r}=
 \begin{pmatrix}\log(N_{B,r}^+\vee2)\\U_{B,r}^D\end{pmatrix}.
 \label{eq:joint-feature-envelope}
\end{equation}
Write
\begin{equation}
 z_{B,r}=\tfrac12(\ell_{B,r}+u_{B,r}),
 \qquad
 \Delta z_{B,r}=\tfrac12(u_{B,r}-\ell_{B,r}),
 \label{eq:joint-feature-center-width}
\end{equation}
where absolute values of vectors below are coordinatewise.

\begin{lemma}
\label{lem:affine-envelope}
The exact extrema of $|a_B^\top(z-m_B)|$ on
$[\ell_{B,r},u_{B,r}]$ are
\begin{align}
 L^\rho_{B,r}
 &=\max\{0,|a_B^\top(z_{B,r}-m_B)|-|a_B|^\top\Delta z_{B,r}\},
 \label{eq:affine-lower}\\
 \shortintertext{and}
 U^\rho_{B,r}
 &=|a_B^\top(z_{B,r}-m_B)|+|a_B|^\top\Delta z_{B,r}.
 \label{eq:affine-upper}
\end{align}
Consequently, for every $\theta\in B$,
\begin{equation}
 L^\rho_{B,r}\le\rho_B(G_\theta^{(r)})\le U^\rho_{B,r}.
 \label{eq:affine-score-envelope}
\end{equation}
\end{lemma}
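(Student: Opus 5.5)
The argument splits into an elementary extremum computation on a box, followed by an application of the coordinatewise envelopes from Lemma~\ref{lem:quantile-envelope}.

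\emph{Step 1: range of the affine function.} Write $z=z_{B,r}+\Delta z_{B,r}\circ e$, where $e\in[-1,1]^2$ and $\circ$ is the coordinatewise product. Then
\[
 f(z):=a_B^\top(z-m_B)=c+\sum_{k=1}^2(a_B)_k(\Delta z_{B,r})_k e_k,
 \qquad c=a_B^\top(z_{B,r}-m_B).
\]
Since $\Delta z_{B,r}\ge0$ coordinatewise, each summand ranges over the symmetric interval $[-|(a_B)_k|(\Delta z_{B,r})_k,\,|(a_B)_k|(\Delta z_{B,r})_k]$, and the summands vary independently because the box is a product. Hence the range of $f$ on $[\ell_{B,r},u_{B,r}]$ is exactly the interval $[c-w,\,c+w]$, where $w=|a_B|^\top\Delta z_{B,r}$. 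Both endpoints are attained at vertices of the box, and every intermediate value is attained by convexity of the box and continuity of $f$.

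\emph{Step 2: extrema of the absolute value.} The maximum of $|y|$ over $[c-w,c+w]$ is $\max\{|c-w|,|c+w|\}=|c|+w$, which is $U^\rho_{B,r}$. The minimum is $0$ if the interval contains $0$, which happens exactly when $|c|\le w$. Otherwise it is the distance $|c|-w$ from $0$ to the interval. Combining the two cases gives $\max\{0,|c|-w\}$, which is $L^\rho_{B,r}$. This also shows that $U^\rho_{B,r}$ agrees with the maximum defining it in \eqref{eq:residual-calibration}.

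\emph{Step 3: the score envelope \eqref{eq:affine-score-envelope}.} Fix $\theta\in B$ and check that $S(G_\theta^{(r)})\in[\ell_{B,r},u_{B,r}]$.

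For the first coordinate, Lemma~\ref{lem:graph-envelope} gives $N_{B,r}^-\le N(G_\theta^{(r)})\le N_{B,r}^+$. The map $N\mapsto\log(N\vee2)$ is monotone, so the first coordinate lies in the required interval.

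For the second coordinate, Lemma~\ref{lem:quantile-envelope} gives $L^D_{B,r}\le T_D(G_\theta^{(r)})\le U^D_{B,r}$. This inequality already includes the small-size cases of \eqref{eq:small-size-envelopes}.

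With $S(G_\theta^{(r)})$ in the box, Step 2 gives $L^\rho_{B,r}\le|a_B^\top(S(G_\theta^{(r)})-m_B)|=\rho_B(G_\theta^{(r)})\le U^\rho_{B,r}$.

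The only points needing care are that the box is a genuine product of intervals, which requires $\ell_{B,r}\le u_{B,r}$ (this follows from the two-sided envelopes just cited), and the case split at $|c|=w$ in Step 2. Neither is a real obstacle.
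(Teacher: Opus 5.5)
Your proposal is correct and follows the same route as the paper, which also identifies the image of the feature rectangle under $z\mapsto a_B^\top(z-m_B)$ as the interval centered at $a_B^\top(z_{B,r}-m_B)$ with half-width $|a_B|^\top\Delta z_{B,r}$, reads off the extreme absolute values, and then uses that the envelope box contains every feasible $S(G_\theta^{(r)})$. Your explicit appeal to Lemmas~\ref{lem:graph-envelope} and \ref{lem:quantile-envelope} for that containment fills in what the paper states in a single line.
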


\begin{proof}
The linear image of the feature rectangle is
\begin{align}
 \{a_B^\top(z-m_B):z\in[\ell_{B,r},u_{B,r}]\}
 =\bigl[&a_B^\top(z_{B,r}-m_B)-|a_B|^\top\Delta z_{B,r},\notag\\
        &a_B^\top(z_{B,r}-m_B)+|a_B|^\top\Delta z_{B,r}\bigr].
 \label{eq:affine-linear-image}
\end{align}
Taking the minimum and maximum absolute values gives
\eqref{eq:affine-lower}--\eqref{eq:affine-upper}.  Since
\eqref{eq:joint-feature-envelope} contains every feasible
$S(G_\theta^{(r)})$, \eqref{eq:affine-score-envelope} follows.
\end{proof}

With the pilot fixed, the independent calibration replicas use the upper
envelope $U^\rho_{B,r}$ in \eqref{eq:residual-calibration}.  Because the score
$\rho_B$ itself depends on $B$, all boxes of the prespecified partition are
calibrated separately.

For the degree and edge procedures, write
\begin{equation}
 p_B^D(G)=p_B(T_D(G)),
 \qquad
 p_B^E(G)=p_B(T_E(G)).
 \label{eq:degree-edge-box-scores}
\end{equation}
Fix one score family among $p_B^D,p_B^E,p_B^\rho$ and denote it by $p_B$.
Its deterministic point estimate is
\begin{equation}
 \mathcal A(G)=\argmax_{B\in\mathcal B}p_B(G),
 \qquad
 \widehat\nu_{\mathrm{cal}}(G)
 =\frac1{|\mathcal A(G)|}\sum_{B\in\mathcal A(G)}
       \frac{\nu_-(B)+\nu_+(B)}2,
 \label{eq:calibrated-point-estimate}
\end{equation}
where $\nu_\pm(B)=\gamma_\pm(B)^2/2$.

\subsection{Final proof of Theorem~\ref{thm:calibration}}
\label{subsec:final-calibration}

For part~(i), Lemmas~\ref{lem:graph-envelope} and
\ref{lem:quantile-envelope} give \eqref{eq:envelope-intro} simultaneously for
every rectangle in the countable tree.  Fix the true parameter $\theta$.  If
$\theta$ leaves the retained union at any step, then some rectangle
$B\ni\theta$ has been discarded and hence $p_B(T_0)\le\alpha$.  By
Lemma~\ref{lem:rank-domination},
\begin{equation}
 \{\exists j\ge0:\theta\notin\Theta_j\}
 \subseteq\{q_\theta\le\alpha\}.
 \label{eq:single-error-event}
\end{equation}
Equation~\eqref{eq:true-rank-validity} therefore gives
\eqref{eq:main-calibration}.  The inclusion is pathwise, so the selected boxes
and stopping step may depend on the data and all previously computed
envelopes.  Equation~\eqref{eq:retained-set-monotonicity} proves
$\Theta_{j+1}\subseteq \Theta_j$, and Lemma~\ref{lem:local-exactness} proves the local
exactness assertion.  Finally, $\theta\in\Theta_j$ implies
$\gamma^2/2\in\Theta_j^\nu$, so projection preserves coverage.

For part~(ii), condition on the pilot $\mathcal D_0$ and fix $\theta\in B$.
By independence of the target graph and the calibration replicas,
\[
 \rho_B(G),\rho_B(G_\theta^{(1)}),\ldots,
 \rho_B(G_\theta^{(R)})
\]
are conditionally independent and identically distributed.  Define
\begin{equation}
 q_{\theta,B}^\rho
 =\frac{1+\sum_{r=1}^R
   \one\{\rho_B(G_\theta^{(r)})\ge\rho_B(G)\}}{R+1}.
 \label{eq:residual-exact-rank}
\end{equation}
Lemma~\ref{lem:affine-envelope} gives
\begin{equation}
 p_B^\rho(G)\ge q_{\theta,B}^\rho.
 \label{eq:residual-rank-domination}
\end{equation}
Proposition~\ref{prop:rank}, applied conditionally on $\mathcal D_0$, proves
\eqref{eq:main-residual-size}.

Choose a deterministic box $B_*(\theta)\in\mathcal B$ containing $\theta$,
using a fixed ordering on shared boundaries.  From
\eqref{eq:residual-confidence-set},
\begin{equation}
 \{\gamma^2/2\notin C_\alpha^\rho(G)\}
 \subseteq
 \{p_{B_*(\theta)}^\rho(G)\le\alpha\}.
 \label{eq:residual-coverage-inclusion}
\end{equation}
Combining \eqref{eq:residual-coverage-inclusion} with
\eqref{eq:main-residual-size} proves \eqref{eq:main-residual-coverage}.
Equation~\eqref{eq:pilot-positive-definite} gives $\sigma_B^2>0$ for every
pilot realization.

\section{Graph-law approximation and the continuous model}
\label{sec:continuum}

This section transfers finite-resolution graph laws to the continuous GMC
model.  We first compare regular Cox laws by covariance interpolation and then
show that weak convergence of directing measures becomes total variation after
positions and labels are erased.  The periodic FFT--pixel approximation is
treated separately to obtain uniform quantitative rates.

\subsection{Gaussian interpolation and Poisson differences}
\label{subsec:interpolation}

We begin with a regular covariance comparison that will later be applied before
passing to the singular GMC limit.  Let $K_0,K_1$ be bounded continuous
covariances on the compact observation domain, or finite-pixel covariances, and
let $X_j$ be centered Gaussian fields with covariance $K_j$, $j=0,1$.  Let
$\Xi_j$ be the Cox point process with conditional intensity
\begin{equation}
 \lambda_j(x)\dd x
 =n e^{\gamma X_j(x)-\gamma^2K_j(x,x)/2}\dd x,
 \qquad j=0,1,
 \label{eq:regular-cox-intensity}
\end{equation}
and let $Q_j$ be its image under the same measurable graph-generation kernel.
Write $\EE_j$ for expectation at the endpoint $K_j$, including Cox and graph
randomness when present.

\begin{lemma}
\label{lem:regular-tv}
\begin{equation}
 \max\left\{
 d_{\TV}\bigl(\Law(\Xi_0),\Law(\Xi_1)\bigr),
 d_{\TV}(Q_0,Q_1)
 \right\}
 \le1\wedge n^2\iint_{D^2}
       \left|e^{\gamma^2K_1(x,y)}-e^{\gamma^2K_0(x,y)}\right|\dd x\dd y.
 \label{eq:regular-tv}
\end{equation}
\end{lemma}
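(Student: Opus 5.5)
The plan is to use Gaussian interpolation between the two covariances, with Poisson difference operators in place of derivatives. It suffices to bound $|\EE_1F(\Xi_1)-\EE_0F(\Xi_0)|$ for every measurable $F$ with $0\le F\le1$ on finite configurations. The graph law $Q_j$ is the image of $\Law(\Xi_j)$ under one common Markov kernel (edge marks, then erasure of positions and labels), so by the data-processing inequality for total variation $d_{\TV}(Q_0,Q_1)\le d_{\TV}(\Law(\Xi_0),\Law(\Xi_1))$. The bound by $1$ is trivial, so only the Cox laws need treatment.

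Take independent copies $X_0,X_1$ and set $X_t=\sqrt t\,X_1+\sqrt{1-t}\,X_0$. Its covariance is $K_t=tK_1+(1-t)K_0$. Let $\lambda_t(x)=n e^{\gamma X_t(x)-\gamma^2K_t(x,x)/2}$ and let $\Xi_t$ be the Cox process directed by $\lambda_t$. Put $\Phi(t)=\EE F(\Xi_t)=\EE\,G(\lambda_t)$, where $G(\lambda)=\EE F(\mathrm{Poi}(\lambda))$.

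\emph{Key step: a derivative formula.} I expect
\[
 \Phi'(t)=\frac{\gamma^2}{2}\iint_{D^2}\bigl(K_1-K_0\bigr)(x,y)\,
 \EE\bigl[\lambda_t(x)\lambda_t(y)\,D^2_{x,y}F(\Xi_t)\bigr]\dd x\dd y,
\]
where $D^2_{x,y}F(\xi)=F(\xi+\delta_x+\delta_y)-F(\xi+\delta_x)-F(\xi+\delta_y)+F(\xi)$. To derive it, note that the Poisson functional satisfies $\partial G/\partial\lambda(x)=\EE D_xF$ and $\partial^2G/\partial\lambda(x)\partial\lambda(y)=\EE D^2_{x,y}F$ (Mecke \eqref{eq:mecke}, or the perturbation formulas of \cite{MolchanovZuyev,LastPerturbation}). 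Then apply Gaussian integration by parts to $\Psi(X)=G(ne^{\gamma X-\gamma^2K_t(\cdot,\cdot)/2})$. Two kinds of terms appear: the first-order terms from the $t$-dependence of the diagonal normalization $-\gamma^2K_t(x,x)/2$, and the "diagonal" first-order parts of $\partial_x\partial_y\Psi$. The two cancel exactly, because $\partial_x\lambda=\gamma\lambda$ and the diagonal pieces carry $\tfrac{\gamma^2}{2}(K_1-K_0)(x,x)\lambda(x)\EE D_xF$ with opposite signs. Only the mixed second-difference term remains.

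In the pixel case everything is a finite-dimensional Gaussian computation, where the cancellation is checked by direct differentiation. In the bounded continuous case I would first discretize, or argue via a Karhunen--Lo\`eve truncation, and pass to the limit. In both cases differentiation under $\EE$ is justified by domination: $|D^2F|\le2$ and $\EE\lambda_t(x)\lambda_t(y)=n^2e^{\gamma^2K_t(x,y)}$ is uniformly bounded because the $K_j$ are bounded. I expect this step, namely obtaining the cancellation and justifying it rigorously, to be the main obstacle.

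\emph{Bounding the derivative.} Since $|D^2_{x,y}F|\le2$ and $\EE\lambda_t(x)\lambda_t(y)=n^2e^{\gamma^2K_t(x,y)}$,
\[
 |\Phi(1)-\Phi(0)|\le n^2\iint_{D^2}\int_0^1\gamma^2\bigl|K_1-K_0\bigr|(x,y)\,e^{\gamma^2K_t(x,y)}\dd t\dd x\dd y .
\]
For fixed $(x,y)$, the sign of $K_1-K_0$ does not depend on $t$, so the inner integral equals $\bigl|\int_0^1\frac{\dd}{\dd t}e^{\gamma^2K_t(x,y)}\dd t\bigr|=\bigl|e^{\gamma^2K_1(x,y)}-e^{\gamma^2K_0(x,y)}\bigr|$. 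Taking the supremum over $F$ gives \eqref{eq:regular-tv}, and the constant comes out as exactly $1$.
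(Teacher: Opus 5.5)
Your proposal is correct and is essentially the paper's proof. It uses the same ingredients: Gaussian covariance interpolation, Poisson second differences with exact cancellation of the diagonal normalization terms, the bounds $|D^2_{x,y}F|\le2$ and $\EE\lambda_t(x)\lambda_t(y)=n^2e^{\gamma^2K_t(x,y)}$, exact integration in $t$ using the $t$-independent sign of $K_1-K_0$, and contraction of total variation under the common graph kernel. The paper also does the computation on a common finite partition, reducing $F$ to a function of cell counts, and reaches bounded continuous covariances by cell-center approximation. There the common-thinning coupling gives $d_{\TV}\le n\EE\int_D|W_m-W|\dd x\to0$, and the right side converges by dominated convergence.
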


\begin{proof}
First use a finite partition $D=\bigsqcup_{i\in I}D_i$ on which both
covariances are constant.  Let $\Sigma_j$ be the corresponding cell covariance
matrix after absorbing $\gamma$, so that
\begin{align}
 (\Sigma_j)_{ik}&=\gamma^2K_j(x,y),
 &&x\in D_i,\ y\in D_k,\quad j=0,1,\notag\\
 \Sigma_t&=(1-t)\Sigma_0+t\Sigma_1,
 &\dot\Sigma&=\Sigma_1-\Sigma_0,\notag\\
 Y_t&\sim N(0,\Sigma_t),
 &0&\le t\le1.
 \label{eq:interpolation-covariance}
\end{align}
Put
\begin{equation}
 \lambda_i(t,y)=n|D_i|e^{y_i-(\Sigma_t)_{ii}/2}.
 \label{eq:interpolation-intensity}
\end{equation}
For an event concerning the point process or the graph, condition on the cell
counts and average over positions within the cells and over graph marks.  This
gives a fixed $H:\N_0^I\to[0,1]$.  Write
\[
 \nabla_i^+H(z)=H(z+e_i)-H(z),
 \qquad
 \nabla_i^+\nabla_j^+H=\nabla_i^+(\nabla_j^+H),
 \qquad
 Z_i(t,y)\sim\operatorname{Pois}(\lambda_i(t,y)),
\]
with the $Z_i(t,y)$ conditionally independent, and set
\[
 \Phi_t(y)=\EE H\bigl((Z_i(t,y))_{i\in I}\bigr).
\]
Termwise differentiation of the Poisson series gives
\begin{align}
 \partial_i\Phi_t
 &=\lambda_i\EE[\nabla_i^+H\mid y],
 \label{eq:first-poisson-derivative}\\
 \partial_{ij}\Phi_t
 &=\one\{i=j\}\lambda_i\EE[\nabla_i^+H\mid y]
       +\lambda_i\lambda_j\EE[\nabla_i^+\nabla_j^+H\mid y],
 \label{eq:second-poisson-derivative}\\
 \partial_t\Phi_t
 &=-\tfrac12\sum_i\dot\Sigma_{ii}\partial_i\Phi_t.
 \label{eq:variance-normalization-derivative}
\end{align}
Gaussian integration by parts yields
\begin{equation}
 \frac{\dd}{\dd t}\EE\Phi_t(Y_t)
 =\EE\partial_t\Phi_t(Y_t)
      +\tfrac12\sum_{i,j}\dot\Sigma_{ij}\EE\partial_{ij}\Phi_t(Y_t).
 \label{eq:gaussian-interpolation-step}
\end{equation}
The diagonal term in \eqref{eq:second-poisson-derivative} cancels
\eqref{eq:variance-normalization-derivative}, so
\begin{equation}
 \frac{\dd}{\dd t}\EE H
 =\tfrac12\sum_{i,j}\dot\Sigma_{ij}
    \EE\left[\lambda_i\lambda_j\EE(\nabla_i^+\nabla_j^+H\mid Y_t)\right].
 \label{eq:cox-cancellation}
\end{equation}
Since
\[
 |\nabla_i^+\nabla_j^+H|\le2,
 \qquad
 \EE(\lambda_i\lambda_j)=n^2|D_i||D_j|e^{(\Sigma_t)_{ij}},
\]
integration in $t$ gives
\begin{align}
 |\EE_1H-\EE_0H|
 &\le n^2\sum_{i,j}|D_i||D_j|
       \int_0^1|\dot\Sigma_{ij}|e^{(\Sigma_t)_{ij}}\dd t
 \label{eq:finite-covariance-bound-a}\\
 &=n^2\sum_{i,j}|D_i||D_j|
       |e^{(\Sigma_1)_{ij}}-e^{(\Sigma_0)_{ij}}|.
 \label{eq:finite-covariance-bound}
\end{align}
Degenerate covariance matrices are handled by adding the same positive
multiple of the identity at both endpoints and sending it to zero.

For bounded continuous covariances, let $z_m(x)$ be the center of the cell
containing $x$, and define
\begin{equation}
 X_m(x)=X(z_m(x)),
 \qquad
 W_m(x)=e^{\gamma X_m(x)-\gamma^2\EE X_m(x)^2/2},
 \qquad
 W(x)=e^{\gamma X(x)-\gamma^2\EE X(x)^2/2}.
 \label{eq:cell-center-field}
\end{equation}  Then
\begin{equation}
 \EE|W_m(x)-W(x)|^2
 =e^{\gamma^2K(z_m(x),z_m(x))}+e^{\gamma^2K(x,x)}
       -2e^{\gamma^2K(z_m(x),x)}\longrightarrow0.
 \label{eq:cell-center-l2}
\end{equation}
Bounded point variances and dominated convergence imply
\[
 \EE\int_D|W_m-W|\dd x\longrightarrow0.
\]
Let $\Xi^{(m)}$ and $\Xi$ have conditional intensities $nW_m\dd x$ and
$nW\dd x$.  Common Poisson thinning gives
\begin{equation}
 d_{\TV}\bigl(\Law(\Xi^{(m)}),\Law(\Xi)\bigr)
 \le n\EE\int_D|W_m-W|\dd x\longrightarrow0.
 \label{eq:cell-center-cox-tv}
\end{equation}
The right side of \eqref{eq:finite-covariance-bound} converges by dominated
convergence.  Pixel covariances are already covered by a common refinement of
their partitions, and a common graph-generation Markov kernel contracts total
variation.  Taking the supremum over events proves \eqref{eq:regular-tv}.
\end{proof}

\subsection{A deletion formula for decision sensitivity}
\label{subsec:deletion}

The estimate in Lemma~\ref{lem:regular-tv} takes absolute values before
averaging and therefore discards cancellation.  For a fixed graph decision,
the same interpolation admits a signed vertex-deletion representation.  Fix
$\gamma,n$ and the independent-edge connection rule, and set
\begin{equation}
 K_t=(1-t)K_0+tK_1,
 \qquad
 \Delta K=K_1-K_0,
 \qquad 0\le t\le1,
 \label{eq:deletion-interpolation}
\end{equation}
where $K_0,K_1$ are bounded continuous covariances or are constant on a common
finite partition.  Write $\EE_t$ for expectation under the interpolated Cox
and graph law.  For a bounded measurable graph function $0\le H\le1$, define
\begin{equation}
 \nabla^-_{ij}H(G)
 =H(G)-H(G-i)-H(G-j)+H(G-\{i,j\}),
 \label{eq:deletion-difference}
\end{equation}
where $G-i$ is the induced graph obtained by deleting vertex $i$.  The four
graphs use the same retained edge marks, and any fitted quantities entering
$H$ are held fixed.  For a realized Cox configuration, write $(x_i)$ for the
latent vertex locations.

\begin{lemma}
\label{lem:deletion-identity}
\begin{equation}
 \EE_1H(G)-\EE_0H(G)
 =\gamma^2\int_0^1\EE_t\sum_{i<j}
     \Delta K(x_i,x_j)\nabla^-_{ij}H(G)\dd t.
 \label{eq:deletion-identity}
\end{equation}
\end{lemma}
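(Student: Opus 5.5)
The plan is to rerun the interpolation of Lemma~\ref{lem:regular-tv} without taking absolute values, and then to convert the add-one differences into deletion differences with the multivariate Mecke formula. I treat the piecewise-constant case first. Take a common finite partition $(D_i)_{i\in I}$ on which $K_0,K_1$ are constant. Let $\Sigma_t$ be the cell covariance after absorbing $\gamma$, as in \eqref{eq:interpolation-covariance}, so that $\dot\Sigma_{ik}=\gamma^2\Delta K(x,y)$ for $x\in D_i$, $y\in D_k$. Identity \eqref{eq:cox-cancellation} is exact:
\[
 \frac{\dd}{\dd t}\EE_tH
 =\tfrac12\sum_{i,k}\dot\Sigma_{ik}\,
  \EE\bigl[\lambda_i\lambda_k\,\EE(\nabla_i^+\nabla_k^+\bar H\mid Y_t)\bigr].
\]
Here $\bar H$ is the cell-count function obtained by averaging $H$ over positions and marks. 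The diagonal Poisson term has already cancelled the variance-normalization derivative.

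Next I rewrite $\lambda_i\lambda_k\EE[\nabla_i^+\nabla_k^+\bar H]$ at the level of point configurations rather than cell counts. Conditionally on $Y_t$, the vertex process is Poisson with intensity $\Lambda_t(\dd x)=nW_t(x)\dd x$, and the edge marks are independent. Adding a point uniformly in $D_i$ with rate $\lambda_i$ is the same as adding a point $x$ drawn from $\Lambda_t$ restricted to $D_i$. This gives
\[
 \sum_{i,k}\dot\Sigma_{ik}\lambda_i\lambda_k\EE\nabla_i^+\nabla_k^+\bar H
 =\gamma^2\iint\Delta K(x,y)\,\EE\bigl[D^+_{x,y}H\bigr]\Lambda_t(\dd x)\Lambda_t(\dd y),
\]
where $D^+_{x,y}H=H(G\cup\{x,y\})-H(G\cup x)-H(G\cup y)+H(G)$. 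In this expression, new vertices receive fresh independent edge marks to all existing vertices and to each other.

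The second-order Mecke formula for the edge-marked Poisson process (\cite[Section~3]{LastNestmannSchulte}) then turns this into a sum over ordered pairs of distinct existing vertices $(x_i,x_j)$. In that sum, $G\cup\{x,y\}$ becomes $G$, $G\cup x$ becomes $G-j$, $G\cup y$ becomes $G-i$, and $G$ becomes $G-\{i,j\}$. All four graphs share the same retained marks, which is exactly the convention in \eqref{eq:deletion-difference}. The result is
\[
 \EE_t\sum_{i\ne j}\Delta K(x_i,x_j)\nabla^-_{ij}H(G).
\]
Using the symmetry of $\Delta K$ and of $\nabla^-_{ij}$, the ordered sum equals $2\sum_{i<j}$, which absorbs the factor $\tfrac12$. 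Integrating in $t\in[0,1]$ gives \eqref{eq:deletion-identity}. Fitted quantities inside $H$ are held fixed, so they cause no trouble in the difference operators.

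The main obstacle is justification rather than the algebra. The add-one differences must be matched with deletions under the Palm version of Mecke, and this requires integrability: $|\nabla^-_{ij}H|\le4$ and $\EE_t\sum_{i\ne j}|\Delta K(x_i,x_j)|\le\iint|\Delta K|e^{K_t}n^2\,\dd x\,\dd y<\infty$ for bounded covariances. The same bound allows differentiation under the expectation, as in Lemma~\ref{lem:regular-tv}. Degenerate covariance matrices are handled, as there, by adding $\epsilon I$ at both endpoints and letting $\epsilon\to0$. For bounded continuous covariances, I approximate by cell-center fields as in \eqref{eq:cell-center-field}. By \eqref{eq:cell-center-cox-tv}, the left side converges. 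On the right side, $\Delta K$ is replaced by its cell-center version, and convergence follows from continuity of $\Delta K$ together with the uniform pair-moment bound $\EE\sum_{i\ne j}\le n^2e^{\gamma^2\sup K_t}|D|^2$ and dominated convergence. I expect this limiting step, which needs convergence of the signed pair functional as well as of the laws, to require the most care. I would handle it with common Poisson thinning couplings and the boundedness of $\nabla^-_{ij}H$.
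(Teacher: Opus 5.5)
Your proposal is correct and follows the paper's proof essentially step for step: exact cancellation \eqref{eq:cox-cancellation} on a common partition, rewriting the cell sums as the add-two form \eqref{eq:deletion-add-form}, the twice-iterated Mecke identity with retained marks to obtain the ordered deletion sum, halving to $i<j$, and a cell-center/common-thinning limit for continuous covariances. The only point to tighten is that limit. A first-moment bound on the number of pairs does not by itself justify dominated convergence, so use, as the paper does, $\sup_t\EE_tN^4<\infty$ (from bounded point variances) together with $\bigl|\sum_{i<j}\Delta K(x_i,x_j)\nabla^-_{ij}H(G)\bigr|\le2\norm{\Delta K}_\infty\binom N2$ to make the deletion sums uniformly integrable.
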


\begin{proof}
Put
\begin{equation}
 \lambda_t(x)=n e^{\gamma X_t(x)-\gamma^2K_t(x,x)/2}.
 \label{eq:deletion-intensity}
\end{equation}
Write $G\oplus x$ for the graph obtained by adding a vertex at $x$ with
fresh independent incident edge marks, and define $G\oplus x\oplus y$
successively.  Then
\begin{equation}
 \nabla_{x,y}^+H(G)
 =H(G\oplus x\oplus y)-H(G\oplus x)-H(G\oplus y)+H(G).
 \label{eq:add-two-difference}
\end{equation}
For a finite partition, \eqref{eq:cox-cancellation} becomes
\begin{equation}
 \frac{\dd}{\dd t}\EE_tH(G)
 =\frac{\gamma^2}{2}\iint\Delta K(x,y)
   \EE_t\!\left[\lambda_t(x)\lambda_t(y)
               \EE_t[\nabla_{x,y}^+H(G)\mid X_t]\right]\dd x\dd y.
 \label{eq:deletion-add-form}
\end{equation}
Retaining all original edge marks, the twice-iterated Mecke identity gives
\begin{align}
 &\iint\Delta K(x,y)
   \EE_t\!\left[\lambda_t(x)\lambda_t(y)
               \EE_t[\nabla_{x,y}^+H(G)\mid X_t]\right]\dd x\dd y \notag\\
 &\hspace{4em}=\EE_t\sum_{i\ne j}
          \Delta K(x_i,x_j)\nabla^-_{ij}H(G).
 \label{eq:deletion-mecke}
\end{align}
The ordered sum on the right is twice the sum over $i<j$; integrating
\eqref{eq:deletion-add-form} proves \eqref{eq:deletion-identity} for pixel
covariances.  The first-order Poisson perturbation identity underlying this
step is \cite[Corollary~6.7, author version]{LastPerturbation}.

For bounded continuous covariances, approximate on a common partition as in
Lemma~\ref{lem:regular-tv}.  Uniformly bounded point variances give
\[
 \sup_t\EE_tN^4<\infty,
 \qquad
 \left|\sum_{i<j}\Delta K(x_i,x_j)\nabla^-_{ij}H(G)\right|
 \le2\norm{\Delta K}_\infty\binom N2.
\]
Hence the deletion sums are uniformly integrable.  The common-thinning
coupling and uniform covariance approximation give convergence in probability,
and integration in $t$ yields \eqref{eq:deletion-identity}.
\end{proof}

Although $(x_i)$ are latent simulator coordinates, $H$ depends only on the
observed graph.  Taking the absolute value after the signed sum gives
\begin{equation}
 |\EE_1H-\EE_0H|
 \le\gamma^2\int_0^1
       \EE_t\left|\sum_{i<j}\Delta K(x_i,x_j)
                            \nabla^-_{ij}H(G)\right|\dd t.
 \label{eq:deletion-bound}
\end{equation}

For the residual score $\rho_B(G)=|a_B^\top(S(G)-m_B)|$, we next bound the
response to two deletions using only the observed degrees.  Let $N\ge4$,
$k=\lceil N/2\rceil$, and let
$d_{(1)}\le\cdots\le d_{(N)}$ be the ordered degrees of $G$.  If $G'$ is
obtained by deleting $q\in\{0,1,2\}$ vertices, put
\begin{equation}
 N'=|V(G')|=N-q,
 \qquad
 j_q=\left\lceil\frac{N'}2\right\rceil,
 \qquad
 k-1\le j_q\le j_q+q\le k+1.
 \label{eq:deletion-ranks}
\end{equation}
Let
\[
 \widetilde d_{(1)}\le\cdots\le\widetilde d_{(N')},
 \qquad
 d'_{(1)}\le\cdots\le d'_{(N')},
\]
be, respectively, the original degrees of the retained vertices and their
degrees in $G'$.  Then, for $1\le r\le N'$,
\begin{equation}
 d_{(r)}\le\widetilde d_{(r)}\le d_{(r+q)},
 \qquad
 (\widetilde d_{(r)}-q)_+\le d'_{(r)}\le\widetilde d_{(r)},
 \qquad
 \med d(G')=d'_{(j_q)}.
 \label{eq:deletion-retained-order}
\end{equation}
Therefore
\begin{equation}
 N-2\le N'\le N,
 \qquad
 (d_{(k-1)}-2)_+\le\med d(G')\le d_{(k+1)}.
 \label{eq:deletion-median}
\end{equation}

Consequently every such $S(G')$ belongs to the rectangle with endpoints
\begin{align}
 \ell^-(G)&=\left(\log(N-2),\,
       \frac{2\log(1+(d_{(k-1)}-2)_+)}{\log N}\right)^\top,\notag\\
 u^-(G)&=\left(\log N,\,
       \Pi_{[0,2]}\frac{2\log(1+d_{(k+1)})}{\log(N-2)}\right)^\top.
 \label{eq:deletion-feature-box}
\end{align}
Define
\begin{align}
 L_B^{\rm del}(G)
 &=\left(
 \left|a_B^\top\left(\frac{\ell^-(G)+u^-(G)}2-m_B\right)\right|
 -\frac12|a_B|^\top\{u^-(G)-\ell^-(G)\}
 \right)_+,
 \notag\\
 U_B^{\rm del}(G)
 &=\left|a_B^\top\left(\frac{\ell^-(G)+u^-(G)}2-m_B\right)\right|
 +\frac12|a_B|^\top\{u^-(G)-\ell^-(G)\}.
 \label{eq:deletion-score-box}
\end{align}
For $H_z(G)=\one\{\rho_B(G)>z\}$,
\begin{equation}
 |\nabla^-_{ij}H_z(G)|
 \le2\one\{L_B^{\rm del}(G)\le z<U_B^{\rm del}(G)\},
 \qquad N\ge4.
 \label{eq:deletion-pivotal}
\end{equation}
For $N<4$, use simply
\[
 |\nabla^-_{ij}H_z|\le2.
\]

\subsection{Weak convergence of chaos and total variation of graphs}
\label{subsec:weak-to-tv}

Weak convergence of directing measures is enough for the spatial Cox process.
After positions and labels are erased, the countable graph state space upgrades
the conclusion to total variation.  Let $E$ be compact, let $\mu_j,\mu$ be
almost surely non-atomic random finite measures, and assume convergence in
distribution for the weak topology of finite measures,
\begin{equation}
 \mu_j\Longrightarrow\mu,
 \qquad
 n_j\longrightarrow n>0,
 \qquad
 \norm{k_j-k}_\infty\longrightarrow0,
 \label{eq:weak-to-tv-assumptions}
\end{equation}
where $k_j,k:E^2\to[0,1]$ are continuous and symmetric.  Let $\Xi_j,\Xi$ be
the Cox point processes directed by $n_j\mu_j,n\mu$, and let $Q_j,Q$ be the
corresponding unlabelled graph laws with isolated vertices retained.

\begin{lemma}
\label{lem:graph-tv-convergence}
\begin{equation}
 \Xi_j\Longrightarrow\Xi,
 \qquad
 d_{\TV}(Q_j,Q)\longrightarrow0.
 \label{eq:weak-to-graph-tv}
\end{equation}
\end{lemma}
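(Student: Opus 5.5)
The plan is to reduce both claims to convergence of finitely many conditional quantities given the directing measure, and then average using weak convergence. The graph state space is countable, so total variation reduces to pointwise convergence of probabilities plus tightness.

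\emph{Cox convergence.} For the first claim I will use Laplace functionals. For continuous $f\ge0$ on $E$,
\[
 \EE e^{-\Xi_j(f)}=\EE\exp\Bigl\{-n_j\int_E(1-e^{-f})\dd\mu_j\Bigr\}.
\]
The map $m\mapsto\exp\{-n\int(1-e^{-f})\dd m\}$ is bounded and weakly continuous on finite measures, and $n_j\to n$. Hence these Laplace functionals converge to that of $\Xi$. Since $E$ is compact, convergence of Laplace functionals over continuous $f$ gives $\Xi_j\Rightarrow\Xi$ by the standard point-process criterion.

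\emph{Graph total variation.} Every finite unlabelled graph $g$ has a probability $Q(\{g\})$, and $\sum_g|Q_j(\{g\})-Q(\{g\})|\to0$ will follow by Scheff\'e's lemma once $Q_j(\{g\})\to Q(\{g\})$ for each $g$. Both $Q_j$ and $Q$ are probability laws on a countable set, so no separate tightness argument is needed. Fix $g$ with $N$ vertices. Conditional on the directing measure $m$ and the intensity $n$,
\[
 Q(\{g\}\mid m)=\frac{e^{-nm(E)}n^N}{N!}\int_{E^N}\sum_{\sigma}\prod_{\{a,b\}}k(x_a,x_b)^{\one\{\cdot\}}(1-k(x_a,x_b))^{\one\{\cdot\}}\,m^{\otimes N}(\dd x).
\]
Here the sum over labellings consistent with $g$ gives a bounded continuous symmetric integrand $\Phi_{g,k}$ on $E^N$. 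Replacing $k$ by $k_j$ changes $\Phi$ by at most $C_N\norm{k_j-k}_\infty$, so the error is at most $C_N n_j^N\EE\mu_j(E)^N/N!\,e^{-n_j\mu_j(E)}$ times that. This is bounded because $x^Ne^{-x}$ is bounded, so the kernel perturbation vanishes uniformly.

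The remaining step is the continuity of $m\mapsto e^{-nm(E)}(nm)^{\otimes N}(\Phi_{g,k})$ in the weak topology. This holds because $m_j\Rightarrow m$ implies $m_j^{\otimes N}\Rightarrow m^{\otimes N}$ on the compact set $E^N$, and $\Phi$ is continuous. The functional is also bounded uniformly in $m$, again because $x^Ne^{-x}$ is bounded. Continuous mapping together with bounded convergence then gives convergence of expectations under $\mu_j\Rightarrow\mu$, with $n_j\to n$ absorbed by joint continuity.

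I expect the main obstacle to be the diagonal issue in the product-measure formula. The Poisson description of the $N$ points uses $m^{\otimes N}$, which is valid in this form only because the points are almost surely distinct, and that is where non-atomicity of $\mu_j$ and $\mu$ enters. I will verify that the Janossy-density formula above holds for non-atomic $m$. Weak continuity of $m\mapsto m^{\otimes N}(\Phi)$ needs no diagonal exclusion, since $\Phi$ is continuous on all of $E^N$, so no further care is required there.
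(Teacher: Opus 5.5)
Your proposal is correct and follows essentially the same route as the paper. Laplace functionals give $\Xi_j\Rightarrow\Xi$; for each unlabelled $g$, the Janossy formula with the labelled orbit sum, the bound $\sup_{a>0}a^me^{-a}/m!$ absorbing $\norm{\Phi_{g,j}-\Phi_g}_\infty\to0$, weak continuity of the remaining bounded functional of $(n_j,\mu_j)$, and the discrete Scheff\'e lemma give $d_{\TV}(Q_j,Q)\to0$. Two cosmetic points: in your perturbation bound the expectation should enclose the whole factor $n_j^N\mu_j(E)^Ne^{-n_j\mu_j(E)}/N!$, and the mixed-binomial (Janossy) representation of a finite Poisson process does not itself require non-atomicity, so the diagonal issue you flag is not a genuine obstacle.
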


\begin{proof}
For continuous $f\ge0$, the Cox Laplace functional is
\begin{equation}
 \EE\exp\left\{-n_j\int_E(1-e^{-f})\,\dd\mu_j\right\}.
 \label{eq:cox-laplace-functional}
\end{equation}
The map
\[
 (a,\lambda)\longmapsto
 \exp\left\{-a\int_E(1-e^{-f})\,\dd\lambda\right\}
\]
is bounded and continuous for $a>0$ and the weak topology on finite measures.
Thus \eqref{eq:weak-to-tv-assumptions} implies $\Xi_j\Rightarrow\Xi$.

Let $G_j\sim Q_j$, $G\sim Q$, and fix an unlabelled graph $g$ with
$m$ vertices.  Put $[m]=\{1,\ldots,m\}$.  For ordered positions
$x_1,\ldots,x_m$, define
\begin{equation}
 \Phi_{g,j}(x_1,\ldots,x_m)=
 \sum_{\substack{G'\text{ labelled on }[m]\\G'\simeq g}}
 \prod_{\{r,s\}\in E(G')}k_j(x_r,x_s)
 \prod_{\{r,s\}\in\binom{[m]}2\setminus E(G')}(1-k_j(x_r,x_s)).
 \label{eq:labelled-orbit}
\end{equation}
Let $\Phi_g$ denote the same expression with $k$ in place of $k_j$.  Then
\begin{equation}
 0\le\Phi_{g,j}\le1,
 \qquad
 \norm{\Phi_{g,j}-\Phi_g}_\infty\longrightarrow0.
 \label{eq:orbit-uniform-convergence}
\end{equation}
Writing $M_j=\mu_j(E)$, the Janossy formula gives
\begin{equation}
 \Pr(G_j\simeq g)
 =\frac{n_j^m}{m!}\EE\left[
 e^{-n_jM_j}\int_{E^m}\Phi_{g,j}(x_1,\ldots,x_m)
          \prod_{i=1}^m\mu_j(\dd x_i)\right].
 \label{eq:graph-pattern-probability}
\end{equation}
For $m=0$, the right side is $\EE e^{-n_jM_j}$.  For $m\ge1$,
\begin{equation}
 \frac{n_j^m e^{-n_jM_j}M_j^m}{m!}
 \le \sup_{a>0}\frac{a^me^{-a}}{m!}
 =\frac{m^me^{-m}}{m!}.
 \label{eq:janossy-uniform-bound}
\end{equation}
Hence replacing $\Phi_{g,j}$ by $\Phi_g$ in
\eqref{eq:graph-pattern-probability} changes the probability by at most
\[
 \norm{\Phi_{g,j}-\Phi_g}_\infty
 \frac{m^me^{-m}}{m!}\longrightarrow0.
\]
With $\Phi_g$ fixed, the remaining expression in
\eqref{eq:graph-pattern-probability} is a bounded continuous function of
$(n_j,\mu_j)$, so
\begin{equation}
 \Pr(G_j\simeq g)\longrightarrow\Pr(G\simeq g)
 \qquad\text{for every }g.
 \label{eq:graph-class-convergence}
\end{equation}
The graph isomorphism classes are countable and the limiting masses sum to one.
The discrete Scheff\'e lemma therefore gives
\begin{equation}
 \frac12\sum_g
 \left|\Pr(G_j\simeq g)-\Pr(G\simeq g)\right|
 \longrightarrow0,
 \label{eq:discrete-scheffe}
\end{equation}
which is \eqref{eq:weak-to-graph-tv}.
\end{proof}

For the quantitative transfer, fix $\gamma^2<2$, $n\ge1$, and a bounded
regular simulator covariance $K_h$ from the setup of
Theorem~\ref{thm:transfer}.  Let $\Xi_{\gamma,n}$ and
$\Xi_{\gamma,n,h}$ be the annealed continuous and simulator Cox point
processes, respectively.

\begin{lemma}
\label{lem:continuum-tv-bound}
\begin{equation}
 \begin{aligned}
 &d_{\TV}(P_{\gamma,n},Q_{\gamma,n,h})
 \vee d_{\TV}\bigl(\Law(\Xi_{\gamma,n}),\Law(\Xi_{\gamma,n,h})\bigr)\\
 &\qquad\le1\wedge n^2\iint_{D^2}
 \left|e^{\gamma^2K(x,y)}-e^{\gamma^2K_h(x,y)}\right|\dd x\dd y.
 \end{aligned}
 \label{eq:continuum-tv-bound}
\end{equation}
\end{lemma}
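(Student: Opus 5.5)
The plan is to apply Lemma~\ref{lem:regular-tv} to regularized covariances and then pass to the singular limit on both sides. Since $K_h$ is already bounded and regular, only the continuous field $K$ needs regularization. I would use the family $K_\delta$ from the setup of Theorem~\ref{thm:transfer}. Lemma~\ref{lem:regular-tv} with endpoints $K_0=K_\delta$ and $K_1=K_h$, both bounded continuous or pixel covariances, gives for each $\delta$
\[
 d_{\TV}\bigl(\Law(\Xi_{\gamma,n,\delta}),\Law(\Xi_{\gamma,n,h})\bigr)\vee d_{\TV}(P^{(\delta)}_{\gamma,n},Q_{\gamma,n,h})
 \le1\wedge n^2\iint_{D^2}\bigl|e^{\gamma^2K_\delta}-e^{\gamma^2K_h}\bigr|\dd x\dd y ,
\]
where $\Xi_{\gamma,n,\delta}$ and $P^{(\delta)}_{\gamma,n}$ are the Cox process and graph law directed by $n\mu_{\gamma,\delta}$. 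It then remains to let $\delta\downarrow0$ on the right side and on the left side separately.

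\emph{Right side.} The convergence $K_\delta\to K$ holds pointwise off the diagonal, so $e^{\gamma^2K_\delta}\to e^{\gamma^2K}$ almost everywhere on $D^2$. The domination hypothesis gives $e^{\gamma^2K_\delta}\le e^{\gamma^2B_1}|x-y|^{-\gamma^2}\vee e^{\gamma^2B_1}$. This is integrable on $D^2$ exactly when $\gamma^2<2$, which is where that assumption enters. $K_h$ is bounded, so dominated convergence shows the right side converges to the stated integral, with the $1\wedge$ preserved.

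\emph{Left side.} For the graph laws, Lemma~\ref{lem:graph-tv-convergence} applies to $\mu_{\gamma,\delta}\Rightarrow\mu_\gamma$ with fixed $n$ and fixed kernel; the limit is non-atomic since $\gamma<2$. It gives $d_{\TV}(P^{(\delta)}_{\gamma,n},P_{\gamma,n})\to0$, and the triangle inequality transfers the bound to $d_{\TV}(P_{\gamma,n},Q_{\gamma,n,h})$. For the spatial Cox processes, Lemma~\ref{lem:graph-tv-convergence} only yields weak convergence $\Xi_{\gamma,n,\delta}\Rightarrow\Xi_{\gamma,n}$, so an extra step is needed. I would use lower semicontinuity of total variation under weak convergence: $d_{\TV}(P,Q)=\sup_f|\EE f|$ over continuous $f$ with $0\le f\le1$, or equivalently half the dual bounded-Lipschitz supremum, on the Polish space of finite configurations. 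Weak convergence of one argument then gives
\[
 d_{\TV}\bigl(\Law(\Xi_{\gamma,n}),\Law(\Xi_{\gamma,n,h})\bigr)\le\liminf_{\delta}d_{\TV}\bigl(\Law(\Xi_{\gamma,n,\delta}),\Law(\Xi_{\gamma,n,h})\bigr).
\]
Combining this with the limit on the right side finishes the proof.

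I expect the main obstacle to be this lower-semicontinuity step for the spatial process. I would also need to check that the constant of Lemma~\ref{lem:regular-tv} carries over when one endpoint is the regularization $K_\delta$ rather than a pixel field; this is covered because $K_\delta$ is bounded continuous. If the lower-semicontinuity route is awkward, a fallback is to pair $\mu_{\gamma,\delta}$ and $\mu_\gamma$ in $L^1$ through Proposition~\ref{prop:gmc-approximation} and use a common-thinning bound $n\EE|\mu_{\gamma,\delta}-\mu_\gamma|(D)$. That requires convergence in total variation of the measures, which is not available, so I would rely on lower semicontinuity.
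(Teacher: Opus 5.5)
Your proposal is correct and follows the paper's proof essentially step for step: Lemma~\ref{lem:regular-tv} for $K_\delta$ against $K_h$, then dominated convergence on the right with the majorant $e^{\gamma^2B_1}(1\vee|x-y|^{-\gamma^2})$ (integrable because $\gamma^2<2$), then Lemma~\ref{lem:graph-tv-convergence} with the triangle inequality for the graph laws, and finally lower semicontinuity of total variation under weak convergence for the Cox laws. One small slip: total variation is the supremum of $|\int f\,\dd P-\int f\,\dd Q|$ over continuous (or Lipschitz) $f$ with $0\le f\le1$, but it is not half the dual bounded-Lipschitz norm, which is a strictly weaker metric; your primary formulation is the one that gives the lower semicontinuity.
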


\begin{proof}
Compare an auxiliary regularization $K_\delta$ of the target field with $K_h$
using Lemma~\ref{lem:regular-tv}.  For the graph laws,
Lemma~\ref{lem:graph-tv-convergence} permits $\delta$ to tend to the continuum
limit on the left.  For the Cox laws, the regularized point-process laws
converge weakly on the Polish space of finite point measures, while total
variation from the fixed $K_h$ law is lower semicontinuous under weak
convergence.  Thus the same passage follows by a lower limit.

The approximation hypothesis before Theorem~\ref{thm:transfer} gives
\begin{equation}
 e^{\gamma^2K_\delta(x,y)}
 \le C\bigl(1+|x-y|^{-\gamma^2}\bigr),
 \label{eq:continuum-exponential-majorant}
\end{equation}
which is integrable on $D^2$ because $\gamma^2<2$.  The bounded covariance
$K_h$ contributes an integrable term as well.  Dominated convergence proves
\eqref{eq:continuum-tv-bound}.
\end{proof}

\subsection{The exact periodic FFT--pixel approximation}
\label{subsec:fft-pixel}

We first identify the ideal exact-arithmetic real-FFT sampler in
\eqref{eq:fft-indexing}--\eqref{eq:fft-chaos}; the next two lemmas verify the
hypotheses of Proposition~\ref{prop:gmc-approximation}.  The convention in
\eqref{eq:fft-indexing}--\eqref{eq:fft-chaos} sets the zero Fourier mode to
zero and uses real coefficients at self-conjugate Nyquist residues.  Write
$\operatorname{PPP}(\lambda)$ for the Poisson point-process law with intensity
measure $\lambda$.

\begin{lemma}
\label{lem:fft-identification}
\begin{equation}
 \Cov\{X_h(\pi_hx),X_h(\pi_hy)\}=K_h^{\T}(x,y),
 \qquad
 \Var\{X_h(\pi_hx)\}=v_h,
 \label{eq:fft-sampler-covariance}
\end{equation}
and
\begin{equation}
 \Law(\Xi_{\gamma,n,h}^{\T}\mid X_h)
 =\operatorname{PPP}(n\mu_{\gamma,h}^{\T}),
 \qquad
 n\mu_{\gamma,h}^{\T}(\dd x)
 =n e^{\gamma X_h(\pi_hx)-\gamma^2v_h/2}\dd x.
 \label{eq:fft-sampler-directing-measure}
\end{equation}
\end{lemma}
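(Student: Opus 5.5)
The plan is to make the exact-arithmetic real-FFT sampler explicit and compute its covariance directly. The sampler draws complex Gaussian coefficients $\xi_r$, $r\in\Lambda_h$, with the Hermitian constraint $\xi_{-r}=\overline{\xi_r}$. It takes $\xi_0=0$, and at the self-conjugate Nyquist residues ($r=-r$ in $\Lambda_h$, i.e.\ $2r\equiv0$) it takes $\xi_r$ real standard normal. Off these residues it takes $\xi_r=(g_r+ig'_r)/\sqrt2$ with independent standard normals, independent across conjugate pairs. It then sets
\[
 X_h(z)=\sum_{r\in\Lambda_h}\widehat K_h(r)^{1/2}\xi_r e^{2\pi i\widetilde r\cdot z},\qquad z\in h^{-1}\Z^2/\Z^2 .
\]

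\textbf{Reality.} Before computing, I must check that $X_h$ is real at grid points. This needs two facts.
\begin{enumerate}
\item $\widehat K_h(-r)=\widehat K_h(r)$. This holds because $|\widetilde{-r}|=|\widetilde r|$: the representative of $-r$ in $I_h$ differs from $-\widetilde r$ by a multiple of $h$ only in a Nyquist coordinate, where the modulus $h/2$ is unchanged.
\item $e^{2\pi i\widetilde{(-r)}\cdot z}=\overline{e^{2\pi i\widetilde r\cdot z}}$ at grid points $z=\pi_hx$. This holds because $\widetilde{(-r)}+\widetilde r\in h\Z^2$ and $hz\in\Z^2$.
\end{enumerate}

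\textbf{Covariance.} With reality in hand, the moment relations are $\EE\xi_r\overline{\xi_{r'}}=\one\{r=r'\}$ and $\EE\xi_r\xi_{r'}=\one\{r'=-r\}$, including the Nyquist case. Hence
\[
 \Cov\{X_h(\pi_hx),X_h(\pi_hy)\}=\EE X_h(\pi_hx)\overline{X_h(\pi_hy)}
 =\sum_r\widehat K_h(r)\phi_{h,r}(x)\overline{\phi_{h,r}(y)}=K_h^{\T}(x,y).
\]
Setting $x=y$ gives $|\phi_{h,r}|=1$ and therefore $v_h=\sum_r\widehat K_h(r)$. That is \eqref{eq:fft-sampler-covariance}, and it also shows the variance does not depend on $x$.

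\textbf{Directing measure.} Conditional on $X_h$, the sampler places $\operatorname{Poisson}(n\int_{A}e^{\gamma X_h(\pi_h x)-\gamma^2v_h/2}\dd x)$ points independently in each pixel $A$, uniformly within the pixel. Equivalently, it thins a mother process on $\T^2\times[0,\infty)$. Either description is exactly $\operatorname{PPP}(n\mu^{\T}_{\gamma,h})$, because $\mu^{\T}_{\gamma,h}$ has a pixelwise constant density and Poisson superposition and independent splitting over pixels reproduce the Poisson law. This gives \eqref{eq:fft-sampler-directing-measure}.

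\textbf{Main obstacle.} The only delicate point is the Nyquist and indexing bookkeeping: the asymmetric index set $I_h$ and the real coefficients at the four self-conjugate residues. I would handle it by checking these cases explicitly, namely that $r=-r$ forces $\xi_r$ real and the phase $\pm1$ on the grid, so that every term remains real and the covariance identity holds termwise.
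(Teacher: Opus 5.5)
Your proposal is correct and follows essentially the same route as the paper. Both use a spectral synthesis over $\Lambda_h$ with Hermitian-symmetric Gaussian coefficients that are real at the self-conjugate residues, followed by Poisson sampling from the pixelwise-constant density. The only difference is that the paper realizes your unit-variance coefficients as $\xi_r=\widehat w_r/h$, where $\widehat w$ is the discrete Fourier transform of a real white-noise array; the factor $h$ goes into the multiplier $m_h(r)=h\,\widehat K_h(r)^{1/2}$ and the $h^{-2}$ of the inverse transform. Your explicit checks that $|\widetilde{-r}|=|\widetilde r|$ and that the phases are conjugate on the grid fill in bookkeeping the paper only asserts.
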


\begin{proof}
The functions $\{\phi_{h,r}:r\in\Lambda_h\}$ form an orthonormal basis of the
$h^2$-dimensional pixelwise constant subspace of $L^2(\T^2)$.  Under the
discrete Fourier transform, a real white-noise array has conjugate coefficients
at $r$ and $-r$; the self-conjugate residues have both coordinates in
$\{0,h/2\}$ and are real.  At a nonzero residue the sampler uses the multiplier
\begin{equation}
 m_h(r)=\frac{h}{\sqrt{2\pi|\widetilde r|^2}}
          e^{-9|\widetilde r|^2/(2h^2)}.
 \label{eq:fft-multiplier}
\end{equation}
The inverse transform contributes $h^{-2}$ and each white-noise Fourier
coefficient has variance $h^2$, hence
\[
 \frac{m_h(r)^2}{h^2}=\widehat K_h(r).
\]
Conjugate pairs and self-conjugate residues therefore give
\eqref{eq:fft-sampler-covariance}, with
\[
 v_h=\sum_{r\in\Lambda_h}\widehat K_h(r).
\]
The pixel density is consequently \eqref{eq:fft-chaos}; selecting a pixel with
probability proportional to this density and then adding independent uniform
jitter is Poisson sampling from \eqref{eq:fft-sampler-directing-measure}.
\end{proof}

Before the next lemma, set
\begin{equation}
 M_{\gamma,h}^{\T}=\mu_{\gamma,h}^{\T}(\T^2).
 \label{eq:fft-total-mass}
\end{equation}

\begin{lemma}
\label{lem:fft-admissibility}
\begin{align}
 &\lim_{\substack{h\to\infty\\h\ \text{even}}}
 \norm{K_h^{\T}(x,y)-K_{\T}(x-y)}_{L^2(\T^2\times\T^2)}=0,
 \label{eq:fft-kernel-l2}\\
 &\sup_{\substack{h\ge8,\ h\ \text{even}\\x,y\in\T^2}}
 \left\{K_h^{\T}(x,y)-
 \log\frac1{d_{\T}(x,y)\vee h^{-1}}\right\}<\infty,
 \label{eq:fft-log-bound}\\
 &\lim_{u\to\infty}
 \sup_{\substack{0\le\gamma\le\Gamma\\h\ge8,\ h\ \text{even}}}
 \EE\left[M_{\gamma,h}^{\T}\one\{M_{\gamma,h}^{\T}>u\}\right]=0,
 \qquad 0\le\Gamma<2.
 \label{eq:fft-ui}
\end{align}
\end{lemma}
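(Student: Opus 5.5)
The three claims are handled separately. Each one reduces to a Fourier-side estimate on the multipliers $\widehat K_h(r)$ compared with the target coefficients $1/(2\pi|k|^2)$.

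\emph{Step 1: the $L^2$ convergence \eqref{eq:fft-kernel-l2}.}
Write $K_h^{\T}(x,y)=\sum_{r}\widehat K_h(r)\phi_{h,r}(x)\overline{\phi_{h,r}(y)}$. The error splits into two parts.
\begin{enumerate}
\item The tail $\sum_{|k|\gtrsim h}(2\pi|k|^2)^{-2}=O(h^{-2})$ of the target.
\item The difference between $\phi_{h,r}(x)=e^{2\pi i\widetilde r\cdot\pi_hx}$ and $e^{2\pi i\widetilde r\cdot x}$, weighted by $\widehat K_h(r)$.
\end{enumerate}
The Gaussian damping factor $1-e^{-9|r|^2/h^2}$ contributes only $O(|r|^2/h^2)$ inside $\widehat K_h$.

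For fixed $\widetilde r$, the pixel phase error is bounded by $\min(2,2\pi|\widetilde r|/h)$. Hence the $L^2(\T^2\times\T^2)$ norm of the difference is at most
\[
 \sum_{0<|r|\le h}|r|^{-2}\min(1,|r|/h)+\sum_{r}|r|^{-2}(1-e^{-9|r|^2/h^2})+O(h^{-1}).
\]
This is $O(h^{-1}\log h)$ or better, which is more than enough. A Hilbert--Schmidt triangle inequality in the tensor basis makes this rigorous: the norm of a sum of rank-one kernels is bounded by the sum of their norms.

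\emph{Step 2: the log bound \eqref{eq:fft-log-bound}.}
Put $z=\pi_hx-\pi_hy$. Then $K_h^{\T}(x,y)=\sum_{r\ne0}\frac{e^{-9|\widetilde r|^2/h^2}}{2\pi|\widetilde r|^2}\cos(2\pi\widetilde r\cdot z)$. Split the frequencies at $|\widetilde r|=\rho^{-1}$, where $\rho=d_{\T}(x,y)\vee h^{-1}$.
\begin{enumerate}
\item The low frequencies are bounded by $\sum_{0<|r|\le\rho^{-1}}(2\pi|r|^2)^{-1}=\log\rho^{-1}+O(1)$.
\item For $\rho^{-1}<|r|\le h/2$, use summation by parts in the radial variable, or equivalently a smoothed Poisson-summation comparison with the continuum Green kernel $\int e^{-9|\xi|^2/h^2}\cos(2\pi\xi\cdot z)(2\pi|\xi|^2)^{-1}\dd\xi$ on that shell. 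This shows the high frequencies are $O(1)$, because $|z|\gtrsim\rho-h^{-1}$.
\end{enumerate}
When $d_{\T}(x,y)\le 2h^{-1}$, bound the whole sum by $v_h=\sum_{0<|r|\le h}(2\pi|r|^2)^{-1}=\log h+O(1)$. This is the main computational obstacle. The cleanest route I would try first is to write the sum as the Riemann sum of the heat-kernel representation $\frac1{2\pi|r|^2}=\int_0^\infty e^{-2\pi t|r|^2}\dd t$, then apply Poisson summation on the lattice.

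\emph{Step 3: uniform integrability \eqref{eq:fft-ui}.}
Choose $1<q<4/\Gamma^2$. It suffices to show $\sup_{\gamma\le\Gamma,h}\EE (M_{\gamma,h}^{\T})^q<\infty$.
\begin{enumerate}
\item By Step 2, $K_h^{\T}\le\log\frac1{d_{\T}\vee h^{-1}}+B$. Kahane's convexity inequality \cite[Theorem~2.1]{RhodesVargas}, applied to the convex map $u\mapsto u^q$, compares $M_{\gamma,h}^{\T}$ with the chaos of a reference field. That reference field is an exact-scale log-correlated field cut off at scale $h^{-1}$, plus an independent $N(0,B)$ variable. The exact-scale construction is \cite[Theorems~27--28]{Shamov}, which allows pointwise-ordered kernels.
\item The pixel constancy, i.e.\ evaluation at $\pi_hx$, is harmless because Kahane's inequality is applied on the pixel partition.
\item The reference family has $q$-th moments bounded uniformly in the cutoff, for $q<4/\gamma^2$. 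This follows from the standard moment bound behind Proposition~\ref{prop:gmc-moments}, since the martingale cutoffs increase to the limit.
\item Monotonicity in $\gamma$ holds because $K\ge0$ up to the constant shift, exactly as in Lemma~\ref{lem:uniform-mass-moments}. It gives uniformity over $\gamma\le\Gamma$.
\end{enumerate}
Markov's inequality, $\EE[M\one\{M>u\}]\le u^{1-q}\EE M^q$, then finishes the proof.
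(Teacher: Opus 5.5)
\textbf{Step 1 does not prove \eqref{eq:fft-kernel-l2}.} Write $e_k(x)=e^{2\pi ik\cdot x}$. Summing the norms of the rank-one pieces turns the problem into an $\ell^1$ bound on the Fourier coefficients, and in two dimensions this gives up exactly the margin you need. Your termwise estimates are sharp, since $\norm{\phi_{h,r}-e_{\widetilde r}}_{L^2(\T^2)}\asymp|\widetilde r|/h$ on $I_h$. The problem is the summation. The first sum in your display is $h^{-1}\sum_{0<|r|\le h}|r|^{-1}\asymp h^{-1}\cdot2\pi h\asymp1$. The damping sum $\sum_{\widetilde r\in I_h\setminus\{0\}}|\widetilde r|^{-2}(1-e^{-9|\widetilde r|^2/h^2})$ is also of order one, and over all of $\Z^2\setminus\{0\}$ it diverges. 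The claimed $O(h^{-1}\log h)$ is a one-dimensional lattice count, and nothing in your bound tends to zero.

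The missing ingredient is orthogonality. The families $\{\phi_{h,r}\otimes\overline{\phi_{h,r}}\}_{r\in\Lambda_h}$ and $\{e_k\otimes\overline{e_k}\}_{k\in\Z^2}$ are each orthonormal in $L^2(\T^2\times\T^2)$, and $\widehat K_h(r)\le(2\pi|\widetilde r|^2)^{-1}$. Hence the parts with $|\widetilde r|>L$ and $|k|>L$ have norms at most $\bigl(\sum_{|k|>L}(2\pi|k|^2)^{-2}\bigr)^{1/2}\lesssim L^{-1}$, uniformly in $h$. Only the block $|\widetilde r|\le L$ should be treated termwise, and there your estimates give $O(L/h+L^2/h^2)$. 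This low--high split is the paper's argument; taking $L\asymp\sqrt h$ even yields the rate $h^{-1/2}$.

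Steps 2 and 3 follow the paper's strategy, but each omits one concrete ingredient.

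In Step 2, the estimate you call the main obstacle is not carried out. Moreover, Poisson summation is only effective on a full-lattice sum with smooth weights, not on a sum truncated to $I_h$ or cut sharply at $|\widetilde r|=\rho^{-1}$. The paper first removes the truncation. At grid differences $z=j/h$ the phase $e^{2\pi ik\cdot z}$ depends only on $k$ modulo $h$. Grouping $\Z^2$ by residues therefore gives $|K_h^{\T}(x,y)-U_h(\pi_hx-\pi_hy)|\le C$ for the full damped sum $U_h$ of \eqref{eq:full-heat-sum}. Indeed, each of the $h^2$ alias sums equals $h^{-2}\sum_{m\ne0}e^{-9|w_m|^2}/(2\pi|w_m|^2)$ with $w_m=\widetilde r/h+m$ and $|w_m|\ge1/2$, hence is $O(h^{-2})$. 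The heat-kernel Poisson representation of $U_h$ then gives the logarithmic upper bound with no frequency split at all.

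In Step 3, the Kahane comparison must be localized. A Euclidean exact-scale kernel on $[0,1)^2$ does not dominate $\log\{1/(d_{\T}(x,y)\vee h^{-1})\}$ for points near opposite edges, where $d_{\T}$ is small but $|x-y|$ is close to one, and Kahane's inequality needs the ordering everywhere. The paper covers $\T^2$ by finitely many sets of diameter less than $1/2$. On each, $d_{\T}$ is Euclidean in a chart and the truncated logarithm is nonnegative, so every $\gamma\le\Gamma$ can be compared directly with the reference kernel at $\Gamma$. That is the correct form of your monotonicity step: $K_h^{\T}$ itself takes negative values, because its zero mode is zero, so monotonicity must be applied to the reference family. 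With this localization, your $q$-th moment and Markov argument is equivalent to the paper's de la Vall\'ee-Poussin route.
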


\begin{proof}
Fix a frequency cutoff $L$.  For $h>2L$, the residues with
$0<|\widetilde r|\le L$ are canonically identified with integer frequencies,
\[
 \widehat K_h(r)\longrightarrow(2\pi|r|^2)^{-1},
 \qquad
 \phi_{h,r}\longrightarrow e^{2\pi i r\cdot x}
 \quad\text{in }L^2(\T^2).
\]
Orthogonality gives
\begin{equation}
 \sup_h\sum_{|\widetilde r|>L}\widehat K_h(r)^2
 \le\frac1{4\pi^2}\sum_{|k|>L}|k|^{-4}\longrightarrow0.
 \label{eq:fft-tail-l2}
\end{equation}
The continuous Fourier tail satisfies the same bound, so a low--high frequency
split proves \eqref{eq:fft-kernel-l2}.

Define the full heat sum
\begin{equation}
 U_h(z)=\sum_{k\in\Z^2\setminus\{0\}}
 \frac{e^{-9|k|^2/h^2}}{2\pi|k|^2}e^{2\pi i k\cdot z}.
 \label{eq:full-heat-sum}
\end{equation}
Grouping the full Fourier sum by residues modulo $h$ writes the difference
from the mixed half-open representative sum as an $h^{-2}$-weighted Fourier
sum of a uniformly bounded alias profile.  Hence
\begin{equation}
 \sup_{x,y\in\T^2}
 \left|K_h^{\T}(x,y)-U_h(\pi_hx-\pi_hy)\right|\le C.
 \label{eq:fft-heat-comparison}
\end{equation}
Poisson summation yields
\begin{equation}
 U_h(z)\le\log\frac1{d_{\T}(z,0)\vee h^{-1}}+C_0.
 \label{eq:heat-log-bound}
\end{equation}
Since $\pi_h$ moves each point by at most $\sqrt2/h$, the truncated logarithms
at $\pi_hx-\pi_hy$ and $x-y$ differ by at most an absolute constant.
Equations \eqref{eq:fft-heat-comparison}--\eqref{eq:heat-log-bound} prove
\eqref{eq:fft-log-bound}.

For \eqref{eq:fft-ui}, cover $\T^2$ by finitely many open sets $O$ of diameter
less than $1/2$.  By \eqref{eq:fft-log-bound}, in each Euclidean chart,
\begin{equation}
 K_h^{\T}(x,y)\le\log\frac1{|x-y|\vee h^{-1}}+C_1,
 \qquad x,y\in O.
 \label{eq:local-log-upper-bound}
\end{equation}
Choose a fixed $c_0>1$ sufficiently large and set
\begin{equation}
 K_{h,\Gamma}^{\rm ref}(x,y)=
 \Gamma^2\int_1^{c_0h}e^{-u|x-y|}\frac{\dd u}{u}.
 \label{eq:reference-covariance}
\end{equation}
Then, uniformly on $O^2$,
\begin{equation}
 \gamma^2K_h^{\T}(x,y)
 \le\gamma^2\left\{\log\frac1{|x-y|\vee h^{-1}}+C_1\right\}
 \le K_{h,\Gamma}^{\rm ref}(x,y)+C_3,
 \qquad 0\le\gamma\le\Gamma.
 \label{eq:legal-kahane-comparison}
\end{equation}
The reference masses associated with $K_{h,\Gamma}^{\rm ref}$ are uniformly
integrable for $\Gamma<2$ by \cite[Theorem~27]{Shamov}.  Adding the constant
covariance $C_3$ multiplies the reference mass by an independent mean-one
lognormal factor.  De la Vall\'ee--Poussin and Kahane's convexity inequality
\cite[Theorem~28]{Shamov} transfer uniform integrability to
$\mu_{\gamma,h}^{\T}(O)$.  Summing over the finite cover proves
\eqref{eq:fft-ui}.
\end{proof}

Fix $0\le\Gamma<2$, an even sequence $h_j\to\infty$, and
$\gamma_j\to\gamma\in[0,\Gamma]$.

\begin{lemma}
\label{lem:fft-chaos-convergence}
\begin{equation}
 \int_{\T^2}f\,\dd\mu_{\gamma_j,h_j}^{\T}
 \xrightarrow{\mathbb P}
 \int_{\T^2}f\,\dd\mu_\gamma^{\T}
 \qquad\bigl(f\in C(\T^2)\bigr).
 \label{eq:fft-chaos-convergence}
\end{equation}
\end{lemma}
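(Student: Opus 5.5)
The plan is to apply Proposition~\ref{prop:gmc-approximation} (Shamov's Theorem~25) on $E=\T^2$ with Lebesgue measure $m$. The paper states that Lemmas~\ref{lem:fft-identification}--\ref{lem:fft-admissibility} verify its hypotheses, and the main extra ingredient is the moving parameter $\gamma_j$.

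First reduce to a fixed parameter. Write $X_j$ for the field $\gamma_jX_{h_j}(\pi_h\cdot)$, realized on a common isonormal Hilbert space $\mathcal H$ together with the target field $\gamma X^{\T}$. Its covariance is $\gamma_j^2K_{h_j}^{\T}$. Concretely, take $\mathcal H=L^2(\T^2)$ with white noise $W$. Set $X^{\T}=\sum_k \widehat K_{\T}(k)^{1/2}\langle W,e_k\rangle e_k$. The pixel field is the image of the same white noise under the FFT multiplier of Lemma~\ref{lem:fft-identification}, i.e.\ the pixel-average projection followed by the Fourier multiplier. With this coupling, the continuous linear maps $\xi\mapsto X_j\xi$ into $L^0(\T^2)$ are well defined.

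The three hypotheses then check as follows.

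\emph{(a) Pointwise convergence of the fields.} We need $X_j\xi\to\gamma X^{\T}\xi$ in $L^0$ for each $\xi\in\mathcal H$. This is a deterministic statement about kernels. For fixed $\xi$, both sides are $L^2(\T^2)$ functions (Hilbert--Schmidt images). Use the low/high frequency split from the proof of \eqref{eq:fft-kernel-l2}. On $|\widetilde r|\le L$, the multipliers converge and $\phi_{h,r}\to e^{2\pi i r\cdot x}$ in $L^2$. The high frequencies are controlled uniformly in $h$ by \eqref{eq:fft-tail-l2} and Cauchy--Schwarz. The factor $\gamma_j\to\gamma$ only multiplies by a convergent scalar.

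\emph{(b) Convergence of the covariances.} We need $\gamma_j^2K_{h_j}^{\T}\to\gamma^2K_{\T}(x-y)$ in $L^0(\T^2\times\T^2)$. This follows from the $L^2$ convergence \eqref{eq:fft-kernel-l2} together with $\gamma_j\to\gamma$.

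\emph{(c) Uniform integrability.} The total masses $M_{\gamma_j,h_j}^{\T}$ must be uniformly integrable. This is exactly \eqref{eq:fft-ui}, which holds uniformly over $\gamma\le\Gamma$ and even $h\ge8$. This uniformity is why the moving $\gamma_j$ costs nothing.

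Proposition~\ref{prop:gmc-approximation} then gives a GMC $\mu$ over $\gamma X^{\T}$ with $\int f\,\dd\mu_{\gamma_j,h_j}^{\T}\to\int f\,\dd\mu$ in $L^1$, hence in probability, for every $f\in L^1\supset C(\T^2)$.

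It remains to identify $\mu$ with the full-variance subcritical chaos $\mu_\gamma^{\T}$. The mean normalization $\EE\mu=m$ matches. Shamov's GMC over a given field is unique given this normalization, and it coincides with the standard regularization limit, for instance by applying the same proposition to a mollified sequence. The case $\gamma=0$ is trivial: all measures are Lebesgue.

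I expect the main obstacle to be step (a), namely setting up the common Hilbert-space coupling so that the pixel field, which is a projection of white noise, converges pointwise in $\xi$. The delicate parts are the Nyquist and self-conjugate residues and the mixed half-open index set $I_h$. I would handle them by noting that these contribute only frequencies with $|\widetilde r|\ge h/2$, which fall in the high-frequency tail. That tail is controlled by $\sum_{|k|>L}|k|^{-4}$ uniformly in $h$.
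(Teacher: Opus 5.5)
Your proposal is correct and follows the paper's proof: it uses the same white-noise coupling $X_j=\gamma_jA_{h_j}W$ (your pixel-average projection followed by the multiplier is exactly the nonnegative square root $A_h$ of the covariance operator), verifies the same three hypotheses of Proposition~\ref{prop:gmc-approximation} with \eqref{eq:fft-ui} absorbing the moving $\gamma_j$, and identifies the limit by GMC uniqueness. The only difference is in hypothesis (a): the paper obtains operator-norm convergence from \eqref{eq:fft-kernel-l2} via $\norm{A_h-A}_{\mathrm{op}}\le\norm{A_h^2-A^2}_{\mathrm{op}}^{1/2}$, whereas your frequency split also works through Bessel's inequality and $\sup_{|\widetilde r|>L}\widehat K_h(r)\le(2\pi L^2)^{-1}$; one small slip is that $A$ itself is bounded but not Hilbert--Schmidt, which does not affect the argument.
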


\begin{proof}
Let $A_h$ and $A$ be the nonnegative square roots of the covariance operators
on $L^2(\T^2;\R)$ with kernels $K_h^{\T}(x,y)$ and $K_{\T}(x-y)$.
Equation~\eqref{eq:fft-kernel-l2} gives convergence of the covariance operators
in Hilbert--Schmidt, hence operator, norm.  The square-root inequality yields
\begin{equation}
 \norm{A_h-A}_{\mathrm{op}}
 \le\norm{A_h^2-A^2}_{\mathrm{op}}^{1/2}\longrightarrow0.
 \label{eq:square-root-convergence}
\end{equation}
For every $\xi\in L^2(\T^2;\R)$,
\begin{equation}
 \norm{\gamma_jA_{h_j}\xi-\gamma A\xi}_2
 \le \Gamma\norm{A_{h_j}-A}_{\mathrm{op}}\norm{\xi}_2
      +|\gamma_j-\gamma|\norm{A\xi}_2\longrightarrow0.
 \label{eq:generalized-field-convergence}
\end{equation}
Moreover,
\begin{align}
 &\norm{\gamma_j^2K_{h_j}^{\T}(x,y)-\gamma^2K_{\T}(x-y)}_2 \notag\\
 &\qquad\le
 \Gamma^2\norm{K_{h_j}^{\T}(x,y)-K_{\T}(x-y)}_2
 +|\gamma_j^2-\gamma^2|\norm{K_{\T}(x-y)}_2
 \longrightarrow0.
 \label{eq:scaled-kernel-convergence}
\end{align}
Together with \eqref{eq:fft-ui}, these are the hypotheses of
Proposition~\ref{prop:gmc-approximation}.

Choose a countable convergence-determining class
$(f_m)_{m\ge1}\subset C(\T^2)$ with $\norm{f_m}_\infty\le1$.  Proposition
\ref{prop:gmc-approximation} gives
\begin{equation}
 \sum_{m\ge1}2^{-m}
 \left(1\wedge\left|\int f_m\dd\mu_{\gamma_j,h_j}^{\T}
                  -\int f_m\dd\mu_\gamma^{\T}\right|\right)
 \longrightarrow0
 \quad\text{in probability}.
 \label{eq:measure-metric-convergence}
\end{equation}
GMC uniqueness identifies the limit with the standard zero-mode torus chaos,
and \eqref{eq:measure-metric-convergence} proves
\eqref{eq:fft-chaos-convergence}.
\end{proof}

\subsection{Quantitative periodic approximation}
\label{subsec:periodic-rate}

The quantitative argument has three inputs: a dyadic covariance increment,
uniform GMC mass bounds along the interpolation, and a truncation of the Cox
count.  Put
\begin{equation}
 d_h(x,y)=d_{\T}(x,y)\vee h^{-1}.
 \label{eq:periodic-cutoff-distance}
\end{equation}

\begin{lemma}
\label{lem:periodic-covariance-rate}
\begin{align}
 &\sup_{\substack{h\ge8,\ h\ \text{even}\\x,y\in\T^2}}
 \left|K_h^{\T}(x,y)-\log\frac1{d_h(x,y)}\right|\le C,
 \label{eq:periodic-two-sided-log}\\
 &\sup_{\substack{h\ge8,\ h\ \text{even}\\x,y\in\T^2}}
 hd_h(x,y)|K_{2h}^{\T}(x,y)-K_h^{\T}(x,y)|\le C,
 \label{eq:periodic-dyadic-covariance}\\
 &\sup_{h\ge8,\ h\ \text{even}}h^\beta
 \iint_{(\T^2)^2}|K_{2h}^{\T}(x,y)-K_h^{\T}(x,y)|
       d_h(x,y)^{-2+\beta}\dd x\dd y\le C_\beta,
 \qquad 0<\beta<1,
 \label{eq:periodic-weighted-covariance}\\
 &\sup_{\substack{h\ge8,\ h\ \text{even}\\0\le\gamma\le\Gamma}}
 \frac1{\omega_\Gamma(h)}
 \iint_{(\T^2)^2}
 \left|e^{\gamma^2K_h^{\T}(x,y)}
       -e^{\gamma^2K_{\T}(x-y)}\right|\dd x\dd y
 \le C_\Gamma,
 \qquad 0\le\Gamma<\sqrt2.
 \label{eq:periodic-exponential-rate}
\end{align}
\end{lemma}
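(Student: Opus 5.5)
The plan is to derive all four estimates from one comparison object, the full heat sum $U_h$ of \eqref{eq:full-heat-sum}. Together with the alias bound \eqref{eq:fft-heat-comparison}, it reduces statements about $K_h^{\T}$ to statements about a smooth Fourier multiplier on $\Z^2$.

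\emph{Step 1: estimate \eqref{eq:periodic-two-sided-log}.} I will write $U_h$ through the torus heat kernel. Since $e^{-s|k|^2}/|k|^2=\int_s^\infty e^{-u|k|^2}\dd u$,
\[
 U_h(z)=\frac1{2\pi}\int_{9/h^2}^\infty\sum_{k\ne0}e^{-u|k|^2}e^{2\pi ik\cdot z}\dd u .
\]
Poisson summation then turns the inner sum into $\frac{\pi}{u}\sum_{m\in\Z^2}e^{-\pi^2|z+m|^2/u}-1$. For $u\ge1$ the $u$-integral is $O(1)$. On $[9/h^2,1]$ the $m=0$ image dominates, with the other images uniformly bounded, and the resulting exponential integral equals $\log(1/(d_{\T}(z,0)\vee h^{-1}))+O(1)$ in both directions. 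This gives the two-sided version of \eqref{eq:heat-log-bound}. Combined with \eqref{eq:fft-heat-comparison} and the observation that $\pi_h$ moves points by at most $\sqrt2/h$ (which changes $\log(1/d_h)$ by $O(1)$), this proves \eqref{eq:periodic-two-sided-log}.

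\emph{Step 2: estimate \eqref{eq:periodic-dyadic-covariance}.} I split the difference as
\[
 K_{2h}^{\T}-K_h^{\T}
 =\bigl[U_{2h}(\pi_{2h}x-\pi_{2h}y)-U_h(\pi_{2h}x-\pi_{2h}y)\bigr]
 +\bigl[U_h(\pi_{2h}\cdot)-U_h(\pi_h\cdot)\bigr]+\text{alias terms},
\]
and treat the three pieces separately.
\begin{enumerate}
\item The multiplier difference is $\sum_k\psi(k/h)|k|^{-2}e^{2\pi ik\cdot z}$ with $\psi(\xi)=e^{-9|\xi|^2/4}-e^{-9|\xi|^2}$. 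Since $\psi(\xi)=O(|\xi|^2)$, the function $\psi(\xi)/|\xi|^2$ is Schwartz. Poisson summation gives the bound $C\sum_m(1+h|z+m|)^{-N}\le C(hd_h)^{-1}$.
\item The pixel-shift term is controlled by the gradient bound $|\nabla U_h(w)|\le C/(|w|\vee h^{-1})$, which follows from the same heat-kernel representation, together with the displacement $|\pi_{2h}x-\pi_hx|\le\sqrt2/h$. This yields $C/(hd_h)$.
\item The alias terms are the $h^{-2}$-weighted sums behind \eqref{eq:fft-heat-comparison}. I must show they are not merely bounded but also of size $O(1/(hd_h))$, presumably by writing the alias profile as a pixel-scale oscillatory sum and summing by parts in the residue variable.
\end{enumerate}
I expect this last piece to be the main obstacle. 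If the direct argument fails, I will try to obtain decay of the alias profile in $h|z|$ from the smoothness of $e^{-9|\xi|^2}|\xi|^{-2}$ away from the origin.

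\emph{Step 3: estimate \eqref{eq:periodic-weighted-covariance}.} This follows by integrating the bound of Step 2:
\[
 h^\beta\iint(hd_h)^{-1}d_h^{-2+\beta}
 \le Ch^{\beta-1}\Bigl(h^{3-\beta}h^{-2}+\int_{1/h}^{1}\rho^{-2+\beta}\dd\rho\Bigr)\le C_\beta,
\]
where the first term comes from the region $d_{\T}<1/h$ and the second from polar coordinates on $d_{\T}\ge1/h$; both are bounded because $\beta<1$.

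\emph{Step 4: estimate \eqref{eq:periodic-exponential-rate}.} First I telescope: $K_{\T}-K_h^{\T}=\sum_{m\ge0}(K_{2^{m+1}h}^{\T}-K_{2^mh}^{\T})$ pointwise off the diagonal. The limit is identified with $K_{\T}$ by the $L^2$ convergence \eqref{eq:fft-kernel-l2}, and the geometric domination from Step 2 gives pointwise convergence. Off the diagonal, for $d_{\T}\ge1/h$, this yields $|K_{\T}-K_h^{\T}|\le C/(hd_{\T})$. For $d_{\T}<1/h$, Step 1 gives $|K_{\T}-K_h^{\T}|\le\log(1/(hd_{\T}))+C$. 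Next I use $|e^a-e^b|\le|a-b|e^{\max(a,b)}$, together with $e^{\gamma^2\max}\le Cd_{\T}^{-\Gamma^2}$ uniformly in $\gamma\le\Gamma$ (from Step 1), which is where uniformity in $\gamma$ enters. The near region then contributes $C\int_0^{1/h}\rho^{1-\Gamma^2}(1+\log(1/(h\rho)))\dd\rho\le Ch^{-(2-\Gamma^2)}$. The far region contributes $Ch^{-1}\int_{1/h}^{1}\rho^{-\Gamma^2}\dd\rho$, which is $h^{-1}$, $h^{-1}\log h$, or $h^{-(2-\Gamma^2)}$ according as $\Gamma<1$, $\Gamma=1$, or $1<\Gamma<\sqrt2$. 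Since $h^{-(2-\Gamma^2)}\le\omega_\Gamma(h)$ in every case, both contributions are bounded by $C_\Gamma\omega_\Gamma(h)$, which is exactly \eqref{eq:second-moment-rate}.
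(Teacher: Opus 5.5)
Your plan follows the paper's proof. Both compare with the full heat sum $U_h$, use Poisson summation for the two-sided logarithmic and gradient bounds, split $K_{2h}^{\T}-K_h^{\T}$ into cutoff change, pixel shift and aliasing, use polar integration for \eqref{eq:periodic-weighted-covariance}, and split into near and far regions with the mean value theorem for \eqref{eq:periodic-exponential-rate}. Your Schwartz-multiplier treatment of $U_{2h}-U_h$ is equivalent to the paper's change of the lower heat cutoff. Restore the $k=0$ term before applying Poisson summation; it costs only $O(h^{-2})$.

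\textbf{The alias step.} This is the one piece you leave open. It is closed by exactly the summation by parts you propose. Grouping frequencies by residues gives, at a grid difference $j/h$,
\[
 K_h^{\T}(j/h,0)-U_h(j/h)=h^{-2}\sum_{r\in\Lambda_h}\chi_1(\widetilde r/h)\,e^{2\pi i\widetilde r\cdot j/h},
 \qquad
 \chi_1(z)=-\sum_{m\in\Z^2\setminus\{0\}}\chi_0(z+m),
 \qquad
 \chi_0(z)=\frac{e^{-9|z|^2}}{2\pi|z|^2}.
\]
The singular image $m=0$ never enters: at $r=0$ the value $\chi_1(0)$ appears, not $\chi_0(0)$. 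For $z\in[-1/2,1/2]^2$ and $m\ne0$ one has $|z+m|\ge1/2$, so $\chi_1$ is smooth on the closed representative square. This is the precise form of your fallback about smoothness away from the origin.

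Now sum by parts once on the cyclic group, in a coordinate $i$ with $|j_i|=|j|_\infty\le h/2$. This uses two facts:
\begin{enumerate}
\item $|e^{2\pi ij_i/h}-1|\ge4|j_i|/h$;
\item $\sum_r|\chi_1(\widetilde{r+e_i}/h)-\chi_1(\widetilde r/h)|=O(h)$, since interior differences are $O(1/h)$ and the $h$ wrap-around differences are $O(1)$.
\end{enumerate}
Together they give the bound $C/(1+|j|_\infty)$. Since $|j|_\infty\ge c\,hd_{\T}(x,y)$ once $hd_{\T}(x,y)$ exceeds a constant, this is all your Step~2 needs.

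The paper sums by parts twice instead. Radial symmetry makes $\chi_1$ agree on opposite faces, so its periodic extension is continuous with only normal-derivative jumps at the seams. The second differences are then summable, which gives the stronger decay $C/(1+|j|_\infty^2)$.

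\textbf{Step 4.} You obtain the off-diagonal bound $|K_h^{\T}-K_{\T}|\le C/(hd_{\T})$ differently from the paper. You telescope the dyadic bound along $2^mh$ and identify the pointwise limit with $K_{\T}$ through \eqref{eq:fft-kernel-l2}. The paper instead compares $K_h^{\T}$ directly with the torus Green kernel, via the heat representation and the Green kernel's off-diagonal gradient bound.

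Your route reuses Step~2 and needs no separate Green-kernel estimate. It gives the bound only almost everywhere, which suffices for the integral. The near-region bound needs $|K_{\T}(z)-\log(1/d_{\T}(z,0))|\le C$ for the target kernel itself. Obtain this as the limit of your Step~1 along the same sequence, or from the heat representation with zero cutoff; Step~1 as stated covers only $K_h^{\T}$.
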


\begin{proof}
Define
\begin{equation}
 \chi_0(z)=\frac{e^{-9|z|^2}}{2\pi|z|^2},
 \qquad
 \chi_1(z)=-\sum_{m\in\Z^2\setminus\{0\}}\chi_0(z+m).
 \label{eq:periodic-alias-functions}
\end{equation}
Grouping the full heat sum $U_h$ from \eqref{eq:full-heat-sum} by residues
modulo $h$ gives, at a grid difference $j/h$,
\begin{equation}
 K_h^{\T}(j/h,0)-U_h(j/h)
 =h^{-2}\sum_{r\in\Lambda_h}\chi_1(\widetilde r/h)
              e^{2\pi i\widetilde r\cdot j/h}.
 \label{eq:periodic-alias-identity}
\end{equation}
At the zero residue,
\[
 \chi_1(0)=-\sum_{m\ne0}\chi_0(m),
\]
so no value $\chi_0(0)$ is used.  Radial symmetry makes the values of $\chi_1$
agree on opposite faces of the mixed representative rectangle.  Extend
$\chi_1$ periodically and define, before using summation by parts,
\begin{equation}
 \Delta_{h,i}^2\chi_1(z)
 =\chi_1(z+h^{-1}e_i)-2\chi_1(z)+\chi_1(z-h^{-1}e_i),
 \qquad i=1,2.
 \label{eq:periodic-second-difference}
\end{equation}
The periodic extension is continuous and piecewise $C^2$, with only normal
first-derivative jumps at the seams; hence
\begin{equation}
 \sum_{r\in\Lambda_h}
 |\Delta_{h,i}^2\chi_1(\widetilde r/h)|\le C,
 \qquad i=1,2.
 \label{eq:periodic-alias-differences}
\end{equation}
Twice summing \eqref{eq:periodic-alias-identity} by parts gives
\begin{equation}
 |K_h^{\T}(j/h,0)-U_h(j/h)|
 \le \frac{C}{1+|j|_\infty^2}.
 \label{eq:periodic-alias-decay}
\end{equation}
The summation-by-parts multiplier has modulus two at a Nyquist spatial index,
so the estimate does not degenerate; each self-conjugate frequency residue
occurs once in the finite-group covariance.

Poisson summation gives
\begin{equation}
 U_h(z)=\frac1{2\pi}\int_{9/h^2}^{\infty}
 \left\{\frac{\pi}{t}\sum_{m\in\Z^2}
 e^{-\pi^2|z-m|^2/t}-1\right\}\dd t.
 \label{eq:periodic-heat-representation}
\end{equation}
Isolating the nearest lattice image yields
\begin{equation}
 \left|U_h(z)-\log\frac1{d_{\T}(z,0)\vee h^{-1}}\right|\le C,
 \qquad
 |\nabla U_h(z)|\le\frac{C}{d_{\T}(z,0)\vee h^{-1}}.
 \label{eq:periodic-heat-bounds}
\end{equation}
Changing only the lower heat cutoff in \eqref{eq:periodic-heat-representation}
also gives
\begin{equation}
 |U_{2h}(z)-U_h(z)|
 \le C\left(e^{-c h^2d_{\T}(z,0)^2}+h^{-2}\right)
 \le\frac{C}{1+\{h d_{\T}(z,0)\}^2}.
 \label{eq:periodic-heat-dyadic}
\end{equation}
Equations \eqref{eq:periodic-alias-decay}--\eqref{eq:periodic-heat-dyadic},
together with
\[
 d_{\T}(\pi_{2h}x-\pi_{2h}y,\pi_hx-\pi_hy)\le\frac{\sqrt2}{2h},
\]
prove \eqref{eq:periodic-two-sided-log} and
\eqref{eq:periodic-dyadic-covariance}.  Hence, by polar integration,
\begin{align}
 &\iint_{(\T^2)^2}|K_{2h}^{\T}-K_h^{\T}|(x,y)
       d_h(x,y)^{-2+\beta}\dd x\dd y \notag\\
 &\qquad\le Ch^{-\beta}
 +\frac Ch\int_{1/h}^{\diam(\T^2)}r^{-2+\beta}\dd r
 \le C_\beta h^{-\beta},
 \label{eq:periodic-weighted-proof}
\end{align}
which is \eqref{eq:periodic-weighted-covariance}.

For the last bound, the heat representation and the off-diagonal gradient
bound for the torus Green kernel imply, whenever $d_{\T}(x,y)\ge c/h$,
\begin{equation}
 |K_h^{\T}(x,y)-K_{\T}(x-y)|
 \le\frac{C}{h\,d_{\T}(x,y)}.
 \label{eq:periodic-target-pointwise}
\end{equation}
On the complementary disk use the logarithmic upper bounds for both kernels;
on its complement use the mean value theorem and
\eqref{eq:periodic-target-pointwise}.  With $a=\Gamma^2<2$, the radial bound is
\begin{equation}
 Ch^{a-2}+\frac Ch\int_{c/h}^{\diam(\T^2)}r^{-a}\dd r,
 \label{eq:periodic-exponential-integral}
\end{equation}
which is $O(h^{-1})$, $O(h^{-1}\log h)$, or $O(h^{-(2-a)})$ according as
$a<1$, $a=1$, or $1<a<2$.  This proves
\eqref{eq:periodic-exponential-rate}.
\end{proof}

For $a\in[0,a_+]$ and $t\in[0,1]$, let $\mu_{a,h,t}$ be the normalized
lognormal measure associated with covariance
\begin{equation}
 aK_{h,t},
 \qquad
 K_{h,t}=(1-t)K_h^{\T}+tK_{2h}^{\T},
 \label{eq:periodic-covariance-interpolation}
\end{equation}
and set
\begin{equation}
 M_{a,h,t}=\mu_{a,h,t}(\T^2),
 \qquad
 M_{a,h,t}^{x,y}=\int_{\T^2}
 e^{aK_{h,t}(x,z)+aK_{h,t}(y,z)}\mu_{a,h,t}(\dd z).
 \label{eq:periodic-inserted-mass}
\end{equation}
Assume
\begin{equation}
 0<a_+<4,
 \qquad
 1<p<4/a_+,
 \qquad
 0<s<1.
 \label{eq:periodic-moment-parameters}
\end{equation}

\begin{lemma}
\label{lem:periodic-palm-moments}
\begin{align}
 &\sup_{\substack{h\ge8,\ h\ \text{even}\\0\le t\le1\\0\le a\le a_+}}
 \EE M_{a,h,t}^{p}\le C_{a_+,p},
 \label{eq:periodic-positive-moment}\\
 &\sup_{\substack{h\ge8,\ h\ \text{even}\\0\le t\le1\\0\le a\le a_+\\x,y\in\T^2}}
 d_h(x,y)^{2-\beta(a_+,s)}
 e^{aK_{h,t}(x,y)}\EE[(M_{a,h,t}^{x,y})^{-s}]
 \le C_{a_+,s}.
 \label{eq:periodic-insertion-bound}
\end{align}
\end{lemma}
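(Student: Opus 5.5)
The proof splits into the two bounds. Both use Kahane comparison against a fixed reference field, together with the two-sided logarithmic control \eqref{eq:periodic-two-sided-log}. Interpolating covariances preserves that control uniformly in $t$: since $K_{h,t}$ is a convex combination of $K_h^{\T}$ and $K_{2h}^{\T}$, and $d_{2h}\le d_h\le 2d_{2h}$,
\[
 \Bigl|K_{h,t}(x,y)-\log\frac1{d_h(x,y)}\Bigr|\le C
\]
uniformly in $h,t,x,y$. The same covering argument as in the proof of \eqref{eq:fft-ui} then applies.

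For \eqref{eq:periodic-positive-moment}, cover $\T^2$ by finitely many charts $O$ and bound $aK_{h,t}\le a_+\log(1/(|x-y|\vee h^{-1}))+C$ on $O^2$. Compare with the exact-scale reference field $K^{\rm ref}_{h,a_+}$ of \eqref{eq:reference-covariance} plus an independent constant Gaussian of variance $C$. The map $x\mapsto x^p$ is convex, so Kahane's inequality \cite[Theorem~28]{Shamov} gives $\EE\mu_{a,h,t}(O)^p\le e^{Cp(p-1)/2}\,\EE(\text{reference mass})^p$. The reference moment of order $p<4/a_+$ is bounded uniformly in the cutoff $c_0h$. This follows from the standard cascade or scaling argument of \cite[Theorems~2.11]{RhodesVargas} applied to a field of strength $\sqrt{a_+}<2$. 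Summing over charts with $(\sum x_i)^p\le k^{p-1}\sum x_i^p$ finishes this part. For $a<a_+$ I would compare with intensity $a_+$ in the same way, using that $K\ge -C$ can be absorbed by a constant shift.

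For \eqref{eq:periodic-insertion-bound}, set $r=d_h(x,y)$. Restrict $M^{x,y}_{a,h,t}$ to $z$ in the ball $B=B(x,r)$; there both $K_{h,t}(x,z)$ and $K_{h,t}(y,z)$ are at least $\log(1/(|x-z|\vee h^{-1}))-C\ge\log(1/r)-C$. Then $M^{x,y}\ge c\,r^{-2a}\mu_{a,h,t}(B(x,r))$, and so
\[
 \EE(M^{x,y})^{-s}\le C r^{2as}\,\EE\mu_{a,h,t}(B(x,r))^{-s}.
\]
For the negative moment, use the scaling argument of Lemma~\ref{lem:uniform-mass-moments}, concave case, extended to negative powers. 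Rescale $B(x,r)$ to a unit ball: the covariance becomes $\log(1/r)$ plus a field again of the exact log form with cutoff $rh\ge1$. Concave/convex comparison for $x^{-s}$ (convex) with an independent Gaussian of variance $\log(1/r)+C$ then gives
\[
 \EE\mu(B(x,r))^{-s}\le C r^{-2s}e^{a s(1+s)\log(1/r)/2}\,\sup\EE(\text{unit mass})^{-s}.
\]
The last supremum is uniform by \cite[Theorem~2.12]{RhodesVargas} and Kahane comparison, because negative moments of a unit ball mass are finite uniformly over cutoffs. Finally, $e^{aK_{h,t}(x,y)}\le Cr^{-a}$. Collecting exponents gives
\[
 r^{-a+2as-2s-as(1+s)/2},
\]
and multiplying by $r^{2-\beta}$ requires the total power to be $\ge0$, that is,
\[
 \beta(a_+,s)\le 2-a+2as-2s-\tfrac{as(1+s)}2 .
\]

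I expect the exponent bookkeeping to be the main obstacle. The claimed exponent is $\beta(a,s)=(1-s)(2-\tfrac a2(2-s))=2-2s-a+\tfrac{3as}{2}-\tfrac{as^2}{2}$. My crude estimate gives $2-2s-a+\tfrac{3as}2-\tfrac{as^2}2$, which matches exactly. Monotonicity in $a$ then lets $a_+$ replace $a$, since $r\le1$ and the exponent decreases in $a$ because $3s/2-s^2/2-1<0$. The delicate points are uniformity of the rescaled negative moment in $t$, $h$ and the position $x$, and the case where $d_h=h^{-1}$ truncates. For the truncated case I would use the ball of radius $h^{-1}$, whose mass is a pixel lognormal handled directly.
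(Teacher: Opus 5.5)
Your proposal is correct and follows the paper's proof. Both use Kahane comparison against the exact-scale reference field for the positive moments. For the insertion bound, both use the lower bound $M^{x,y}_{a,h,t}\ge c\,r^{-2a}\mu_{a,h,t}(B)$ on a ball of radius comparable to $r=d_h(x,y)$, the rescaled negative-moment estimate $\EE\mu_{a,h,t}(B)^{-s}\le Cr^{-2s-a(s+s^2)/2}$, and monotonicity of $a\mapsto\beta(a,s)$. You also check explicitly, via $d_{2h}\le d_h\le 2d_{2h}$, that the interpolated kernel $K_{h,t}$ inherits the two-sided logarithmic bound, a step the paper leaves implicit.
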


\begin{proof}
The GMC moment bounds in
\cite[Theorems~2.11--2.12]{RhodesVargas}, transferred uniformly to this
cutoff family by Kahane comparison, give \eqref{eq:periodic-positive-moment}
and the local negative-moment estimate below.  Put $r=d_h(x,y)$.  By
\eqref{eq:periodic-two-sided-log}, a geodesic square $B$ of side comparable to
$r$ can be chosen near $x$ so that
\begin{equation}
 M_{a,h,t}^{x,y}\ge c_{a_+}r^{-2a}\mu_{a,h,t}(B).
 \label{eq:inserted-mass-lower-bound}
\end{equation}
The same comparison gives
\begin{equation}
 \EE\mu_{a,h,t}(B)^{-s}
 \le Cr^{-2s-a(s+s^2)/2}.
 \label{eq:local-negative-moment}
\end{equation}
Combining \eqref{eq:inserted-mass-lower-bound},
\eqref{eq:local-negative-moment}, and the logarithmic upper bound for
$K_{h,t}$ gives
\begin{align}
 e^{aK_{h,t}(x,y)}\EE[(M_{a,h,t}^{x,y})^{-s}]
 &\le Cr^{-a+2as-2s-a(s+s^2)/2}\notag\\
 &=Cr^{-2+\beta(a,s)}
 \le Cr^{-2+\beta(a_+,s)},
 \label{eq:insertion-exponent-calculation}
\end{align}
where the last inequality uses that $a\mapsto\beta(a,s)$ is decreasing.
This is \eqref{eq:periodic-insertion-bound}.
\end{proof}

Assume in addition that $0<\beta(a_+,s)<1$, and fix an even $h\ge8$,
$0\le\gamma^2\le a_+$, and $n\ge1$.

\begin{lemma}
\label{lem:periodic-dyadic-law}
\begin{equation}
 \max\left\{
 d_{\TV}\bigl(\Law(\Xi_{\gamma,n,h}^{\T}),
               \Law(\Xi_{\gamma,n,2h}^{\T})\bigr),
 d_{\TV}(Q_{\gamma,n,h}^{\T},Q_{\gamma,n,2h}^{\T})
 \right\}
 \le1\wedge C_{a_+,p,s}
       \left(n^2h^{-\beta(a_+,s)}\right)^{p/(p+s)}.
 \label{eq:periodic-dyadic-law}
\end{equation}
\end{lemma}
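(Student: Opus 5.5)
Interpolate along the covariance path $K_{h,t}=(1-t)K_h^{\T}+tK_{2h}^{\T}$ of \eqref{eq:periodic-covariance-interpolation}, using the signed identity of Lemma~\ref{lem:deletion-identity} with $a=\gamma^2$. Both endpoints are pixel covariances on the common partition at resolution $2h$, so the identity applies directly. The Cox statement is obtained by running the same argument with the identity graph-generation kernel, or simply by contraction of total variation under the common graph kernel.

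The direct bound of Lemma~\ref{lem:regular-tv} requires $e^{aK}$ to be integrable against the two-point intensity. This fails for $a\ge2$, so the plan truncates the vertex count. Fix a measurable $0\le H\le1$ and a level $u>0$. Replace $H$ by $H\one\{N\le u\}$ and pay $\PP_t(N>u)$ at both endpoints. Markov's inequality with the $p$-th moment \eqref{eq:periodic-positive-moment} gives
\[
 \PP_t(N>u)\le\frac{\EE_t N^p}{u^p}\le\frac{C(n^p+1)}{u^p}\le\frac{Cn^p}{u^p},
\]
using $\EE[N^p\mid M]\lesssim (nM)^p+nM$ and $n\ge1$.

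For the truncated function, apply \eqref{eq:deletion-add-form} in its add-two form and move to Palm measures. By Girsanov, inserting points at $x$ and $y$ tilts the field, which converts the intensity product into
\[
 n^2e^{aK_{h,t}(x,y)}\times(\text{law with shifted field}).
\]
The add-two difference of $H\one\{N\le u\}$ is bounded by $2$. It is nonzero only when the count of the configuration after insertion is at most $u$, and that count is $\mathrm{Poisson}(nM')$, where $M'$ is the tilted total mass; this tilted mass is exactly $M^{x,y}_{a,h,t}$ of \eqref{eq:periodic-inserted-mass}. The plan is to bound the probability of a small Poisson count by a negative moment:
\[
 \PP(\mathrm{Poisson}(nM')\le u)\le\Bigl(\frac{C(u+1)}{nM'}\Bigr)^{s},
\]
which follows from a Chernoff bound or from $\EE(1+D)^{-s}\le\lambda^{-s}$ together with Markov. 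Combining this with \eqref{eq:periodic-insertion-bound}, the $(x,y)$ integrand is at most
\[
 C\,n^{2-s}u^{s}\,|\Delta K(x,y)|\,d_h(x,y)^{-2+\beta}, \qquad \beta=\beta(a_+,s).
\]
By \eqref{eq:periodic-weighted-covariance}, which requires $0<\beta<1$, the $(x,y)$ integral is $O(h^{-\beta})$. The truncated difference is therefore $O(n^{2-s}u^sh^{-\beta})$, uniformly in $t$, $a$ and $H$.

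Optimizing $u$ balances $n^{2-s}u^sh^{-\beta}$ against $n^pu^{-p}$. Solving gives
\[
 u^{p+s}=n^{p-2+s}h^{\beta},
\]
and the common value of the two terms is
\[
 n^{p}u^{-p}=n^{p-p(p-2+s)/(p+s)}h^{-\beta p/(p+s)}=(n^2h^{-\beta})^{p/(p+s)}.
\]
This is the claimed rate. Taking the supremum over events, and capping by $1$, gives \eqref{eq:periodic-dyadic-law}.

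The main obstacle is the Palm step: justifying the Girsanov/Mecke rewriting at finite pixel level with the truncation indicator inside. In particular, the truncated count of the inserted configuration must be controlled by the tilted mass $M^{x,y}_{a,h,t}$, uniformly in the interpolation parameter. The alternative route through the signed deletion sum would also need care with $\binom N2$ growth, which is precisely what the truncation is meant to tame. I would first try working entirely in the add-two form \eqref{eq:deletion-add-form}, where the Cameron–Martin shift is explicit on the finite pixel Gaussian vector.
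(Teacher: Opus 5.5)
Your proposal is correct and follows the paper's proof essentially step for step. The shared steps are:
\begin{itemize}
\item truncating the count and controlling the tail with the $p$-th moment at both endpoints;
\item applying the add-two interpolation formula \eqref{eq:cox-cancellation} on the common resolution-$2h$ partition;
\item using the two-point Palm/Cameron--Martin identity \eqref{eq:periodic-double-shift}, which turns $W_t(x)W_t(y)\one\{N\le u\}$ into $e^{aK_{h,t}(x,y)}$ times a Poisson lower tail at mean $nM^{x,y}_{a,h,t}$;
\item invoking \eqref{eq:periodic-insertion-bound} and \eqref{eq:periodic-weighted-covariance}, and choosing the same truncation level $u\asymp n(n^2h^{-\beta})^{-1/(p+s)}$.
\end{itemize}
There are only two cosmetic slips. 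First, the indicator controlling $\nabla^+_{x,y}H_L$ is on the count of the configuration before the two points are added; under the tilted law this count is exactly your $\mathrm{Poisson}(nM')$. Second, the graph bound follows from the Cox bound by contraction, not conversely.
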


\begin{proof}
It is enough to prove the Cox-process bound because a common connection and
observation kernel contracts total variation.  Put $a=\gamma^2$ and write
\begin{equation}
 \mu_{a,h,t}(\dd x)=W_t(x)\dd x,
 \qquad
 N\mid\mu_{a,h,t}\sim\operatorname{Pois}(nM_{a,h,t}).
 \label{eq:periodic-interpolated-count}
\end{equation}
Write $\PP_t,\EE_t$ for probability and expectation under this interpolated
Cox law.  Let $0\le H\le1$ be measurable on finite point configurations and set
$H_L(\xi)=H(\xi)\one\{|\xi|\le L\}$.  The Poisson moment inequality and
\eqref{eq:periodic-positive-moment} give
\begin{equation}
 \max_{t\in\{0,1\}}\PP_t(N>L)
 \le C\left(\frac{n}{L+1}\right)^p.
 \label{eq:periodic-count-tail}
\end{equation}
Both interpolating covariances are constant on the common partition into
resolution-$2h$ pixels.  Conditional on the cell counts, point locations are
independent uniform points in their cells.  Averaging $H_L$ over those
locations gives a bounded function of the count vector that is independent of
$t$, and
\begin{equation}
 |\nabla_{x,y}^+H_L|\le2\one\{N\le L\}.
 \label{eq:truncated-add-two-bound}
\end{equation}
Applying \eqref{eq:cox-cancellation} and rewriting the cell sums as integrals
yields
\begin{align}
 \left|\frac{\dd}{\dd t}\EE_tH_L\right|
 &\le C_{a_+} n^2\iint |K_{2h}^{\T}-K_h^{\T}|(x,y)\notag\\
 &\qquad\times
 \EE_t\{W_t(x)W_t(y)\one\{N\le L\}\}\dd x\dd y.
 \label{eq:periodic-truncated-interpolation}
\end{align}
For $Z_\lambda\sim\operatorname{Poisson}(\lambda)$ and $s>0$,
\begin{equation}
 \PP(Z_\lambda\le L)\le C_s(L+1)^s\lambda^{-s}.
 \label{eq:periodic-poisson-lower-tail}
\end{equation}
Indeed,
\begin{align}
 \PP(Z_\lambda\le L)
 &\le(L+1)^s\EE(Z_\lambda+1)^{-s},\notag\\
 \EE(Z_\lambda+1)^{-s}
 &=\frac1{\Gamma(s)}\int_0^\infty
 t^{s-1}e^{-t+\lambda(e^{-t}-1)}\dd t
 \le C_s\lambda^{-s}.
 \label{eq:poisson-negative-moment}
\end{align}
For $\lambda\ge1$, split the integral at $t=1$ and use
$1-e^{-t}\ge(1-e^{-1})t$ on $0\le t\le1$; for $0<\lambda<1$, the last bound
is immediate.

The two-point Palm formula for log-Gaussian Cox processes
\cite{CoeurjollyMollerWaagepetersen} gives
\begin{equation}
 \EE_t\{W_t(x)W_t(y)\one\{N\le L\}\}
 =e^{aK_{h,t}(x,y)}
 \EE_t\!\left[
 \PP\{Z_{nM_{a,h,t}^{x,y}}\le L\mid\mu_{a,h,t}\}\right].
 \label{eq:periodic-double-shift}
\end{equation}
Equations \eqref{eq:periodic-poisson-lower-tail},
\eqref{eq:periodic-insertion-bound}, and
\eqref{eq:periodic-weighted-covariance} imply
\begin{equation}
 |\EE_1H_L-\EE_0H_L|
 \le Cn^{2-s}(L+1)^sh^{-\beta(a_+,s)}.
 \label{eq:periodic-truncated-bound}
\end{equation}
At the two endpoints,
\begin{equation}
 |\EE_1H-\EE_0H|
 \le \PP_1(N>L)+\PP_0(N>L)+|\EE_1H_L-\EE_0H_L|.
 \label{eq:periodic-truncation-decomposition}
\end{equation}
Combining \eqref{eq:periodic-count-tail},
\eqref{eq:periodic-truncated-bound}, and
\eqref{eq:periodic-truncation-decomposition}, choose
\begin{equation}
 L+1\asymp n\{n^2h^{-\beta(a_+,s)}\}^{-1/(p+s)}
 \label{eq:periodic-optimal-truncation}
\end{equation}
when $n^2h^{-\beta(a_+,s)}\le1$.  Then
\begin{equation}
 \PP_1(N>L)+\PP_0(N>L)+|\EE_1H_L-\EE_0H_L|
 \le C\left(n^2h^{-\beta(a_+,s)}\right)^{p/(p+s)}.
 \label{eq:periodic-balanced-errors}
\end{equation}
Integer rounding changes only the constant; when
$n^2h^{-\beta(a_+,s)}>1$, the bound by one is immediate.  Taking the supremum
over $H$ proves \eqref{eq:periodic-dyadic-law}.
\end{proof}

\subsection{Square-domain cutoff and pixelization bounds}
\label{subsec:explicit-error}

We next specialize Lemma~\ref{lem:continuum-tv-bound} to an explicit
square-domain cutoff and its pixelization.  Set
\begin{equation}
 K_*(x,y)=\int_1^\infty e^{-u^2|x-y|^2}\frac{\dd u}{u},
 \qquad
 K_{*,\varepsilon}(x,y)=\int_1^{1/\varepsilon}
                 e^{-u^2|x-y|^2}\frac{\dd u}{u}.
 \label{eq:star-covariance}
\end{equation}
Let $P_{\gamma,n}^{*}$ denote the continuous graph law generated by $K_*$.  This is
a scale-integral covariance of the type used in \cite{FGN}.  Let
$Q_{\gamma,n,\varepsilon}$ be the graph law directed by the chaos of
$K_{*,\varepsilon}$, and let $Q_{\gamma,n,\varepsilon,\delta}$ be the graph law
obtained by evaluating that field at centers of pixels of side $\delta$ and
holding it constant inside each pixel.  Define
\begin{equation}
 E_1(z)=\int_z^\infty e^{-t}\frac{\dd t}{t}.
 \label{eq:exp-integral-definition}
\end{equation}
Then
\begin{equation}
 K_*(x,y)=\tfrac12E_1(|x-y|^2),
 \qquad
 K_*(x,y)-K_{*,\varepsilon}(x,y)
 =\tfrac12E_1(|x-y|^2/\varepsilon^2).
 \label{eq:exp-integral}
\end{equation}
On the unit square, put
\begin{equation}
 \sup_{0<r\le\sqrt2}
 \left|\tfrac12E_1(r^2)+\log r\right|<0.372.
 \label{eq:exp-integral-constant}
\end{equation}
Thus
\[
 K_*(x,y)\le\log\frac1{|x-y|}+0.372.
\]
With $\gamma_{\rm E}$ denoting Euler's constant,
\begin{equation}
 \frac{\dd}{\dd r}
 \left\{\tfrac12E_1(r^2)+\log r\right\}
 =\frac{1-e^{-r^2}}{r}>0,
 \qquad
 \lim_{r\downarrow0}
 \left\{\tfrac12E_1(r^2)+\log r\right\}
 =-\frac{\gamma_{\rm E}}2,
 \label{eq:exp-integral-monotonicity}
\end{equation}
and
\begin{equation}
 \tfrac12E_1(2)+\tfrac12\log2<0.372.
 \label{eq:exp-integral-endpoint}
\end{equation}

Fix $0<\Gamma<\sqrt2$ and define
\begin{equation}
 C_{\rm cut}(\Gamma)=2\pi e^{0.372\Gamma^2}
       \left(\frac1{2-\Gamma^2}+\frac{\Gamma^2}{4e}\right).
 \label{eq:cutoff-constant}
\end{equation}
For
\begin{equation}
 0\le\gamma\le\Gamma,
 \qquad
 n\le n_+,
 \qquad
 0<\varepsilon<1,
 \qquad
 \delta>0,
 \label{eq:explicit-cutoff-parameters}
\end{equation}
the following bounds hold.

\begin{corollary}
\label{cor:explicit-cutoff}
\begin{align}
 d_{\TV}(P_{\gamma,n}^{*},Q_{\gamma,n,\varepsilon})
 &\le1\wedge n_+^2C_{\rm cut}(\Gamma)\varepsilon^{2-\Gamma^2},
 \label{eq:cutoff-error}\\
 d_{\TV}(Q_{\gamma,n,\varepsilon},Q_{\gamma,n,\varepsilon,\delta})
 &\le1\wedge\frac{2\Gamma^2}{\sqrt e}n_+^2
                   \delta\varepsilon^{-1-\Gamma^2}.
 \label{eq:pixel-error}
\end{align}
\end{corollary}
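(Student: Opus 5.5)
Both bounds will come from Lemma~\ref{lem:regular-tv}, or from its continuum form Lemma~\ref{lem:continuum-tv-bound}, which controls the graph-law distance by
\[
 n^2\iint_{D^2}\bigl|e^{\gamma^2K_1}-e^{\gamma^2K_0}\bigr|\,\dd x\dd y .
\]
Since $n\le n_+$, it is enough to bound these double integrals uniformly in $0\le\gamma\le\Gamma$.

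\emph{Cutoff error \eqref{eq:cutoff-error}.} Take $K_1=K_*$ and $K_0=K_{*,\varepsilon}$. The pair fits Lemma~\ref{lem:continuum-tv-bound}: $K_{*,\varepsilon}$ is a bounded continuous regularization of $K_*$, and its chaos converges to that of $K_*$ as $\varepsilon\downarrow0$. By \eqref{eq:exp-integral}, $K_*\ge K_{*,\varepsilon}$, so
\[
 e^{\gamma^2K_*}-e^{\gamma^2K_{*,\varepsilon}}
 =e^{\gamma^2K_*}\bigl(1-e^{-\gamma^2E_1(r^2/\varepsilon^2)/2}\bigr),
 \qquad r=|x-y|.
\]
Using \eqref{eq:exp-integral-constant}, I bound $e^{\gamma^2K_*}\le e^{0.372\Gamma^2}r^{-\gamma^2}$. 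I then bound the double integral over $D^2$ by $\int_0^{\sqrt2}2\pi r\,(\cdot)\,\dd r$: for fixed $x$, the area of $\{y:|x-y|\in\dd r\}$ is at most $2\pi r\,\dd r$, and $|D|=1$. Split the radial integral at $r=\varepsilon$.
\begin{enumerate}
\item For $r\le\varepsilon$, bound $1-e^{-\cdots}$ by $1$. This gives $2\pi\int_0^\varepsilon r^{1-\gamma^2}\,\dd r=2\pi\varepsilon^{2-\gamma^2}/(2-\gamma^2)$.
\item For $r>\varepsilon$, use $1-e^{-u}\le u$ and $E_1(z)\le e^{-z}/z$. This gives $\tfrac{\gamma^2}{2}\,\varepsilon^2r^{-2}e^{-r^2/\varepsilon^2}$. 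After substituting $r=\varepsilon v$, the tail is bounded by $2\pi\varepsilon^{2-\gamma^2}\tfrac{\gamma^2}{2}\int_1^\infty v^{-1-\gamma^2}e^{-v^2}\,\dd v\le 2\pi\varepsilon^{2-\gamma^2}\tfrac{\gamma^2}{4e}$. The last step bounds $\int_1^\infty$ by $\int_1^\infty ve^{-v^2}\,\dd v=\tfrac1{2e}$, using $v^{-1-\gamma^2}\le v$ for $v\ge1$.
\end{enumerate}
Since $\varepsilon<1$ and $\gamma\le\Gamma$, we have $\varepsilon^{2-\gamma^2}\le\varepsilon^{2-\Gamma^2}$. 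Also $1/(2-\gamma^2)\le 1/(2-\Gamma^2)$. This yields $C_{\rm cut}(\Gamma)$.

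\emph{Pixel error \eqref{eq:pixel-error}.} Here both fields are bounded continuous or pixel fields, so Lemma~\ref{lem:regular-tv} applies directly. The pixelized covariance is $K_{*,\varepsilon}(z(x),z(y))$, where $z(\cdot)$ denotes the pixel centre, and each coordinate moves by at most $\delta/\sqrt2$. The mean value theorem gives
\[
 |e^{\gamma^2A}-e^{\gamma^2B}|\le\gamma^2e^{\gamma^2\max(A,B)}|A-B| .
\]
I bound the two factors separately.
\begin{enumerate}
\item $K_{*,\varepsilon}\le\log(1/\varepsilon)$, hence $e^{\gamma^2K_{*,\varepsilon}}\le\varepsilon^{-\Gamma^2}$.
\item Differentiating under the integral, $|\partial_r K_{*,\varepsilon}|\le\int_1^{1/\varepsilon}2u^2re^{-u^2r^2}\,\dd u/u$. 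Since $\sup_{w}\,2w e^{-w^2}=\sqrt{2/e}$, this is at most $\sqrt{2/e}\int_1^{1/\varepsilon}\dd u\le\sqrt{2/e}\,\varepsilon^{-1}$. The pair distance changes by at most $\sqrt2\delta$, so the change in $K_{*,\varepsilon}$ is at most $2\delta\varepsilon^{-1}/\sqrt e$.
\end{enumerate}
Multiplying by $\gamma^2\le\Gamma^2$ and $|D^2|=1$ gives $2\Gamma^2\delta\varepsilon^{-1-\Gamma^2}/\sqrt e$, as claimed. The truncation at $1$ in both bounds is trivial.

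\emph{Expected obstacle.} The main difficulty is the bookkeeping of the displacement constant. The bound $\sqrt2\delta$ on the change of $|x-y|$ must be justified for the chosen pixel-centre convention: a cell of side $\delta$ has half-diagonal $\delta/\sqrt2$, and two points each move by at most that much. The derivative bound must also be handled uniformly in $r$, including near $r=0$, and the cancellation $\sqrt2\cdot\sqrt{2/e}=2/\sqrt e$ must come out exactly as above. I would double-check these constants, together with the cutoff-tail calculation, carefully.
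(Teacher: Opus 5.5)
Your proposal is correct and follows the paper's proof essentially step for step. For the cutoff bound you use Lemma~\ref{lem:continuum-tv-bound}, split the radial integral at $|x-y|=\varepsilon$ (the paper's $u=1$ after $r=\varepsilon u$), and apply $1-e^{-w}\le w$, $E_1(z)\le e^{-z}/z$ and $v^{-1-\Gamma^2}\le v$. For the pixel bound you use Lemma~\ref{lem:regular-tv} with the gradient bound $\sqrt{2/e}\,\varepsilon^{-1}$, the displacement $\sqrt2\,\delta$, and the range $[0,\log(1/\varepsilon)]$.

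The only cosmetic difference is in the cutoff bound. The paper first reduces to $\gamma=\Gamma$, noting that $e^{\gamma^2K_*}-e^{\gamma^2K_{*,\varepsilon}}$ is increasing in $\gamma^2$ because $K_*\ge K_{*,\varepsilon}\ge0$. You instead compute at general $\gamma$ and then use $\varepsilon^{2-\gamma^2}\le\varepsilon^{2-\Gamma^2}$ and the monotonicity of the constants in $\gamma$. Both routes give exactly $C_{\rm cut}(\Gamma)$.
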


\begin{proof}
The nonnegative covariances in \eqref{eq:star-covariance} are ordered, so the
exponential difference is increasing in $\gamma^2$.  With $z=x-y$ and
$D-D\subset\{|z|\le\sqrt2\}$,
\begin{align}
 &\iint_{D^2}(e^{\Gamma^2K_*(x,y)}-e^{\Gamma^2K_{*,\varepsilon}(x,y)})\dd x\dd y\notag\\
 &\qquad\le2\pi e^{0.372\Gamma^2}\varepsilon^{2-\Gamma^2}
   \int_0^\infty u^{1-\Gamma^2}
      \bigl(1-e^{-(\Gamma^2/2)E_1(u^2)}\bigr)\dd u.
 \label{eq:radial-bound}
\end{align}
On $0<u\le1$ use
$1-e^{-(\Gamma^2/2)E_1(u^2)}\le1$.  For $u\ge1$,
\[
 1-e^{-(\Gamma^2/2)E_1(u^2)}
 \le\frac{\Gamma^2}{2}\frac{e^{-u^2}}{u^2},
 \qquad
 u^{-1-\Gamma^2}\le u.
\]
Thus
\begin{equation}
 \int_0^1u^{1-\Gamma^2}\dd u=\frac1{2-\Gamma^2},
 \qquad
 \frac{\Gamma^2}{2}\int_1^\infty u^{-1-\Gamma^2}e^{-u^2}\dd u
 \le\frac{\Gamma^2}{4e}.
 \label{eq:cutoff-radial-integrals}
\end{equation}
Lemma~\ref{lem:continuum-tv-bound} gives \eqref{eq:cutoff-error}.

Let $z_\delta(x)$ be the center of the pixel containing $x$.  Direct
differentiation in the difference variable yields
\begin{equation}
 \sup_z|\nabla K_{*,\varepsilon}(z)|
 \le\sqrt{2/e}\,\varepsilon^{-1},
 \qquad
 |(x-y)-\{z_\delta(x)-z_\delta(y)\}|\le\sqrt2\,\delta.
 \label{eq:pixel-covariance-lipschitz}
\end{equation}
Both covariances lie in $[0,\log(1/\varepsilon)]$, so the mean value theorem
gives
\begin{equation}
 \left|e^{\gamma^2K_{*,\varepsilon}(x,y)}
 -e^{\gamma^2K_{*,\varepsilon}(z_\delta(x),z_\delta(y))}\right|
 \le\frac{2\Gamma^2}{\sqrt e}
       \delta\varepsilon^{-1-\Gamma^2}.
 \label{eq:pixel-exponential-bound}
\end{equation}
Lemma~\ref{lem:regular-tv} proves \eqref{eq:pixel-error}.
\end{proof}

The same approximation budget applies to the independently piloted residual
on its fixed partition.  Assume deterministic numbers $\eta_B$ satisfy
\begin{equation}
 \eta_B\ge\sup_{\theta\in B}d_{\TV}(P_\theta,Q_{\theta,h}),
 \qquad B\in\mathcal B,
 \label{eq:residual-box-error}
\end{equation}
and define
\begin{equation}
 \widetilde p_B^\rho=\min\{1,p_B^\rho+\eta_B\},
 \qquad
 \widetilde C_\alpha^\rho(G)=
  \bigcup_{B\in\mathcal B:\,\widetilde p_B^\rho(G)>\alpha}
       \{\gamma^2/2:(\gamma,n)\in B\}.
 \label{eq:corrected-residual-set}
\end{equation}
Fix $\theta=(\gamma,n)\in\Theta$.  For almost every pilot realization
$\mathcal D_0$, the following assertion holds.

\begin{corollary}
\label{cor:residual-transfer}
\begin{equation}
 (P_\theta\otimes P_h^{\rm aux})
 \{\gamma^2/2\in\widetilde C_\alpha^\rho(G)\mid\mathcal D_0\}
 \ge1-\alpha.
 \label{eq:residual-transfer}
\end{equation}
\end{corollary}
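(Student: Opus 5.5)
The plan is to reduce the continuous-law coverage to the simulator-law coverage proved in Theorem~\ref{thm:calibration}(ii), paying at most the total-variation budget $\eta_B$. Fix the pilot realization $\mathcal D_0$ outside a null set, fix $\theta\in\Theta$, and choose the deterministic box $B_*=B_*(\theta)\in\mathcal B$ containing $\theta$, as in the proof of Theorem~\ref{thm:calibration}. As in \eqref{eq:residual-coverage-inclusion},
\[
 \{\gamma^2/2\notin\widetilde C_\alpha^\rho(G)\}\subseteq\{\widetilde p_{B_*}^\rho(G)\le\alpha\}
 =\{p_{B_*}^\rho(G)\le\alpha-\eta_{B_*}\},
\]
where this event is empty if $\eta_{B_*}\ge\alpha$, because then $\widetilde p^\rho_{B_*}\ge\min\{1,\eta_{B_*}\}>\alpha$ or equals $1>\alpha$.

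Next, condition on $\mathcal D_0$ and on the calibration replicas. With $a_{B_*}$, $m_{B_*}$ and the envelopes $U^\rho_{B_*,r}$ frozen, the set $A=\{g:p_{B_*}^\rho(g)\le u\}$ with $u=\alpha-\eta_{B_*}$ is a fixed measurable set of finite graphs. This holds because $p_{B_*}^\rho$ depends on $G$ only through $\rho_{B_*}(G)$. The target graph is independent of $\mathcal D_0$ and of the replicas, so under $P_\theta$ versus $Q_{\theta,h}$ the conditional probabilities of $A$ differ by at most $d_{\TV}(P_\theta,Q_{\theta,h})\le\eta_{B_*}$, by \eqref{eq:residual-box-error}. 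Integrating over the calibration replicas with $\mathcal D_0$ held fixed (Fubini) gives
\[
 (P_\theta\otimes P_h^{\rm aux})\{p^\rho_{B_*}(G)\le u\mid\mathcal D_0\}
 \le(Q_{\theta,h}\otimes P_h^{\rm aux})\{p^\rho_{B_*}(G)\le u\mid\mathcal D_0\}+\eta_{B_*}.
\]

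Finally, \eqref{eq:main-residual-size} bounds the first term by $u=\alpha-\eta_{B_*}$, so the failure probability is at most $\alpha$. This gives \eqref{eq:residual-transfer}.

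The only delicate point is measurability and the conditioning. The conditioning on $\mathcal D_0$ must be applied consistently on both sides, which requires that the auxiliary law does not depend on whether the target is drawn from $P_\theta$ or $Q_{\theta,h}$. It is the same $P_h^{\rm aux}$ in both cases. The pilot and calibration replicas are simulator draws regardless of the target law, so the change of measure touches only the target coordinate, and the total-variation bound applies cleanly.
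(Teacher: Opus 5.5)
Your proposal is correct and follows the paper's proof. You fix the deterministic box $B_*\ni\theta$, reduce noncoverage to $\{p^\rho_{B_*}\le\alpha-\eta_{B_*}\}$, bound it under $Q_{\theta,h}\otimes P_h^{\rm aux}$ by \eqref{eq:main-residual-size}, and pay $d_{\TV}(P_\theta,Q_{\theta,h})\le\eta_{B_*}$ for changing only the target coordinate; your conditioning-and-Fubini justification of that last step usefully expands the paper's one-line remark. The one slip is the boundary case $\eta_{B_*}=\alpha$, where $\min\{1,\eta_{B_*}\}=\alpha$ is not strictly larger than $\alpha$, so emptiness of the failure event should instead follow from $p^\rho_{B_*}\ge 1/(R+1)>0$, as in \eqref{eq:residual-large-error-retention}.
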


\begin{proof}
Condition on $\mathcal D_0$ and choose a fixed $B_*\ni\theta$.  If
$\eta_{B_*}\ge\alpha$, then
\begin{equation}
 \widetilde p_{B_*}^\rho
 \ge\min\left\{1,\alpha+\frac1{R+1}\right\}>\alpha.
 \label{eq:residual-large-error-retention}
\end{equation}
If $\eta_{B_*}<\alpha$, then
\begin{equation}
 \{\widetilde p_{B_*}^\rho\le\alpha\}
 =\{p_{B_*}^\rho\le\alpha-\eta_{B_*}\},
 \label{eq:residual-shifted-rejection}
\end{equation}
and \eqref{eq:main-residual-size} with
$u=\alpha-\eta_{B_*}$ gives
\begin{equation}
 (Q_{\theta,h}\otimes P_h^{\rm aux})
 \{\widetilde p_{B_*}^\rho\le\alpha\mid\mathcal D_0\}
 \le\alpha-\eta_{B_*}.
 \label{eq:residual-simulator-size}
\end{equation}
Since the calibration sample has the same law under both models,
\begin{align}
 &(P_\theta\otimes P_h^{\rm aux})
 \{\widetilde p_{B_*}^\rho\le\alpha\mid\mathcal D_0\}\notag\\
 &\qquad\le
 (Q_{\theta,h}\otimes P_h^{\rm aux})
 \{\widetilde p_{B_*}^\rho\le\alpha\mid\mathcal D_0\}
 +d_{\TV}(P_\theta,Q_{\theta,h})
 \le\alpha.
 \label{eq:residual-target-size}
\end{align}
Noncoverage implies $\widetilde p_{B_*}^\rho\le\alpha$, which proves
\eqref{eq:residual-transfer}.
\end{proof}

\subsection{Final proof of Theorem~\ref{thm:transfer}}
\label{subsec:final-transfer}

The assumed weak convergence of the chaos measures and
Lemma~\ref{lem:graph-tv-convergence} give \eqref{eq:main-qualitative}, while
Lemma~\ref{lem:continuum-tv-bound} gives \eqref{eq:main-tv}.  Taking the
pointwise supremum over $\gamma$ in \eqref{eq:continuum-tv-bound} and the upper
endpoint of $n$ gives \eqref{eq:eta-box}.

Fix the true parameter $\theta$ and set
\begin{equation}
 \eta_\theta=d_{\TV}(P_\theta,Q_{\theta,h}).
 \label{eq:true-parameter-tv}
\end{equation}
For every rectangle $B\ni\theta$,
\begin{equation}
 p_B+\eta_B\ge q_\theta+\eta_\theta.
 \label{eq:corrected-rank-domination}
\end{equation}
Hence
\begin{equation}
 \{\exists j\ge0:\theta\notin\widetilde\Theta_j\}
 \subseteq\{q_\theta\le\alpha-\eta_\theta\},
 \label{eq:corrected-noncoverage-inclusion}
\end{equation}
where the right side is empty if $\eta_\theta\ge\alpha$.
Equation~\eqref{eq:true-rank-validity} therefore gives
\begin{equation}
 (Q_{\theta,h}\otimes P_h^{\rm aux})
 \{\exists j\ge0:\theta\notin\widetilde\Theta_j\}
 \le(\alpha-\eta_\theta)_+.
 \label{eq:corrected-simulator-bound}
\end{equation}
The auxiliary randomness has the same law under the target and simulator
models, so
\begin{equation}
 d_{\TV}(P_\theta\otimes P_h^{\rm aux},
          Q_{\theta,h}\otimes P_h^{\rm aux})=\eta_\theta.
 \label{eq:product-tv}
\end{equation}
Combining \eqref{eq:corrected-simulator-bound} and \eqref{eq:product-tv}
proves \eqref{eq:main-corrected}.

\subsection{Final proof of Theorem~\ref{thm:periodic-rate}}
\label{subsec:final-periodic-rate}

Fix $\gamma\le\Gamma$ and $n\le\bar n$.  Lemma~\ref{lem:periodic-dyadic-law}
at $h,2h,4h,\ldots$ gives, for every $m\ge1$,
\begin{align}
 &d_{\TV}\bigl(\Law(\Xi_{\gamma,n,h}^{\T}),
                 \Law(\Xi_{\gamma,n,2^mh}^{\T})\bigr)\notag\\
 &\qquad\le C\sum_{j=0}^{m-1}
 \left(\bar n^2(2^jh)^{-\beta_\Gamma(s)}\right)^{p/(p+s)}\notag\\
 &\qquad\le C_{\Gamma,p,s}
 \left(\bar n^2h^{-\beta_\Gamma(s)}\right)^{p/(p+s)}.
 \label{eq:periodic-dyadic-telescope}
\end{align}
For fixed $\gamma,n$, Lemmas~\ref{lem:fft-chaos-convergence} and
\ref{lem:graph-tv-convergence} give
\[
 \Xi_{\gamma,n,2^mh}^{\T}\Longrightarrow\Xi_{\gamma,n}^{\T}.
\]
Lower semicontinuity of total variation under weak convergence and
\eqref{eq:periodic-dyadic-telescope} yield
\begin{align}
 &d_{\TV}\bigl(\Law(\Xi_{\gamma,n,h}^{\T}),
                   \Law(\Xi_{\gamma,n}^{\T})\bigr)\notag\\
 &\qquad\le\liminf_{m\to\infty}
 d_{\TV}\bigl(\Law(\Xi_{\gamma,n,h}^{\T}),
                   \Law(\Xi_{\gamma,n,2^mh}^{\T})\bigr)\notag\\
 &\qquad\le C_{\Gamma,p,s}
 \left(\bar n^2h^{-\beta_\Gamma(s)}\right)^{p/(p+s)}.
 \label{eq:periodic-limit-tv-step}
\end{align}
This is the Cox-process bound in \eqref{eq:periodic-full-rate}; contraction
under the common connection-and-erasure kernel gives the graph bound.  The
constants are uniform in $\gamma\le\Gamma$ and $n\le\bar n$.

If $\Gamma<\sqrt2$, the covariance-interpolation argument of
Lemma~\ref{lem:continuum-tv-bound}, applied on $\T^2$ together with
\eqref{eq:periodic-exponential-rate}, gives
\begin{equation}
 d_{\TV}\bigl(\Law(\Xi_{\gamma,n,h}^{\T}),
               \Law(\Xi_{\gamma,n}^{\T})\bigr)
 \le1\wedge C_\Gamma n^2\omega_\Gamma(h).
 \label{eq:periodic-cox-second-moment}
\end{equation}
Total-variation contraction gives the graph bound
\eqref{eq:periodic-second-moment-rate}.  Setting $\bar n=n_+$ in
\eqref{eq:periodic-full-rate} proves \eqref{eq:main-periodic-tv}.  At
$\Gamma=0$ both directing measures equal Lebesgue measure.  Finally,
substitution of \eqref{eq:hard-range-choice} into \eqref{eq:rate-exponents}
gives \eqref{eq:hard-range-exponents}.

\begin{proof}[Proof of Corollary~\ref{cor:fft-continuum-coverage}]
Theorem~\ref{thm:calibration} and independence of the auxiliary randomness give
\begin{align}
 (P_\theta^{\T}\otimes P_h^{\rm aux})(F_{\theta,h})
 &\le (Q_{\theta,h}^{\T}\otimes P_h^{\rm aux})(F_{\theta,h})
      +d_{\TV}(P_\theta^{\T},Q_{\theta,h}^{\T})\notag\\
 &\le\alpha+\Delta_h(\Theta).
 \label{eq:periodic-coverage-transfer-step}
\end{align}
Taking the supremum over $\theta$ and using \eqref{eq:main-periodic-tv} proves
\eqref{eq:periodic-coverage-error}; taking complements and then the lower limit
through even $h$ gives \eqref{eq:periodic-asymptotic-coverage}.  The same
calculation holds conditionally on any fixed independent pilot outcome for
which the residual score is defined.
\end{proof}

\Needspace{10\baselineskip}
\subsection{The order of the limits}
\label{subsec:order-limits}

We finally compare the intensity limit with the resolution limit.  Keep the
finite-pixel covariance fixed while $n\to\infty$.  For each field realization
the sampled density is then bounded, so the degree statistic has a different
limit from the continuous GMC model.  Fix a finite-pixel model and
$0\le\gamma<2$.

\begin{proposition}
\label{prop:fixed-grid-limit}
\begin{equation}
 \forall\epsilon>0:\qquad
 \max\left\{
 Q_{\gamma,n,h}\{T_D>\epsilon\},
 Q_{\gamma,n,h}^{\T}\{T_D>\epsilon\}
 \right\}\longrightarrow0
 \qquad(n\to\infty),
 \label{eq:fixed-grid-degree}
\end{equation}
and
\begin{equation}
 0<\gamma<2
 \quad\Longrightarrow\quad
 d_{\TV}(P_{\gamma,n},Q_{\gamma,n,h})\longrightarrow1
 \qquad(n\to\infty).
 \label{eq:fixed-grid-separation}
\end{equation}
\end{proposition}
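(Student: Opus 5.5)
For \eqref{eq:fixed-grid-degree} I would condition on the finite Gaussian vector $Y$ (square or periodic version). Given $Y$, the density $w_{\gamma,h}$ is a step function on finitely many pixels with values in $[w_-,w_+]$, where $0<w_-\le w_+<\infty$ almost surely. The Palm argument of Lemma~\ref{lem:typical-degree} still applies: by \eqref{eq:palm-degree}, a vertex inserted at $x$ has conditional degree $\operatorname{Poisson}(na_{r_n}(x))$. Here $a_{r_n}(x)\le w_+\int k_{r_n}(x,y)\dd y\le w_+\pi c^2/n$, so the mean degree $\lambda_n(x)\le\pi c^2w_+$ is bounded uniformly in $x$ and $n$. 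Markov's inequality then gives $\Pr(\log(1+d_x)>\epsilon\log n/4\mid Y)\le\pi c^2w_+/(n^{\epsilon/4}-1)\to0$, uniformly in $x$. Averaging with Mecke's identity exactly as in \eqref{eq:bad-count-conditional-mean}, the expected number of vertices with $\log(1+d_v)>(\epsilon/4)\log n$ is $o(n)$ given $Y$.

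Next, $N_n/n\to M_{\gamma,h}>0$ in conditional probability, so $\log(N_n\vee2)/\log n\to1$. If the median degree exceeded $n^{\epsilon/4}-1$, at least $N_n/2$ vertices would be bad, and this has vanishing conditional probability. Hence $T_D\le(\epsilon/2)(1+o(1))<\epsilon$ with conditional probability tending to one. Dominated convergence over $Y$ removes the conditioning. The periodic case is identical, with $d_{\T}$ in place of Euclidean distance and the kernel \eqref{eq:torus-connection}, whose integral is $\le 1/n$.

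For \eqref{eq:fixed-grid-separation} I would use the event $A_n=\{T_D\le\nu/2\}$ as a distinguishing test, with $\nu=\gamma^2/2>0$. By the first part, $Q_{\gamma,n,h}(A_n)\to1$. For the continuous model, Theorem~\ref{thm:single} gives $T_D(G_n)\to\nu$ in conditional probability for almost every environment, so $P_{\gamma,n}(A_n)\to0$ by dominated convergence. (The uniform tail bound \eqref{eq:main-tail} with $\bar\nu=\nu$ gives this too.) Then $d_{\TV}\ge Q(A_n)-P(A_n)\to1$.

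The main obstacle is minor. One must check that Theorem~\ref{thm:single} is stated for the same kernel and domain as $P_{\gamma,n}$ (it is, for the square model), and that the first part really needs no lower bound on $w$ beyond $M_{\gamma,h}>0$. Both hold almost surely for a finite partition with nonempty cells. The separation claim is only asserted for $P$ versus $Q$ on the square domain, so no torus analogue of Theorem~\ref{thm:single} is needed.
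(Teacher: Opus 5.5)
Your proposal is correct and follows essentially the same route as the paper. Like the paper, you condition on the pixel field, bound the Palm mean degree uniformly by a constant times $\norm{w}_\infty$, use Mecke's identity with Markov's inequality to get a median degree of $n^{o(1)}$ while $\log(N_n\vee2)/\log n\to1$, and then separate $P_{\gamma,n}$ from $Q_{\gamma,n,h}$ through Theorem~\ref{thm:single} on the event $\{T_D>\nu/2\}$, which is the complement of your $A_n$. Your parenthetical use of \eqref{eq:main-tail} also works once you take some $0<\epsilon\le\epsilon_0$ with $\epsilon<\nu/2$, for instance $\epsilon=\min\{\epsilon_0,\nu/4\}$; this matters because $\epsilon_0$ can be smaller than $\nu/2$ when $\nu$ is close to $2$.
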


\begin{proof}
Condition on the finite Gaussian vector and write
\begin{equation}
 w=w_{\gamma,h},
 \qquad
 M=\int w\dd x,
 \qquad
 0<M<\infty,
 \qquad
 \norm w_\infty<\infty
 \quad\text{a.s.}
 \label{eq:fixed-grid-density-bounds}
\end{equation}
In either the square or periodic model, the Palm degree at $x$ is Poisson with
mean $\lambda_n(x)$ satisfying
\begin{equation}
 \lambda_n(x)\le(1\vee\pi c^2)\norm w_\infty.
 \label{eq:fixed-grid-palm-bound}
\end{equation}
Indeed, the square-model integral is bounded by $\pi c^2\norm w_\infty/n$,
while \eqref{eq:torus-connection} is bounded by integrating over a fundamental
square.  For $\epsilon>0$, put
\begin{equation}
 Z_n^{\rm grid}(\epsilon)=\#\{v:\log(1+d_v)>\epsilon\log n\}.
 \label{eq:fixed-grid-bad-count-definition}
\end{equation}
Mecke's identity and Markov's inequality give, in either model,
\begin{equation}
 \frac1n\EE[Z_n^{\rm grid}(\epsilon)\mid w]
 \le M\{1+(1\vee\pi c^2)\norm w_\infty\}n^{-\epsilon}\longrightarrow0.
 \label{eq:fixed-grid-bad-count}
\end{equation}
Since $N_n/n\to M>0$ conditionally in probability,
\begin{equation}
 \frac{Z_n^{\rm grid}(\epsilon)}{N_n}\longrightarrow0,
 \qquad
 \left\{\frac{\log(1+\med_vd_v)}{\log n}>\epsilon\right\}
 \subseteq\left\{Z_n^{\rm grid}(\epsilon)\ge\frac{N_n}{2}\right\},
 \label{eq:fixed-grid-median-inclusion}
\end{equation}
with $Z_n^{\rm grid}(\epsilon)/N_n:=0$ on $\{N_n=0\}$.  Therefore
\begin{equation}
 \frac{\log(1+\med_vd_v)}{\log n}\longrightarrow0,
 \qquad
 \frac{\log(N_n\vee2)}{\log n}\longrightarrow1
 \quad\text{conditionally in probability}.
 \label{eq:fixed-grid-statistic-limits}
\end{equation}
Hence $T_D\to0$ conditionally in probability, and bounded convergence gives
\eqref{eq:fixed-grid-degree}.

For $\gamma>0$, Theorem~\ref{thm:single} and
\eqref{eq:fixed-grid-degree} give
\begin{equation}
 P_{\gamma,n}\{T_D>\nu/2\}\longrightarrow1,
 \qquad
 Q_{\gamma,n,h}\{T_D>\nu/2\}\longrightarrow0.
 \label{eq:fixed-grid-separating-event}
\end{equation}
Hence
\begin{equation}
 1\ge d_{\TV}(P_{\gamma,n},Q_{\gamma,n,h})
 \ge P_{\gamma,n}\{T_D>\nu/2\}
    -Q_{\gamma,n,h}\{T_D>\nu/2\}\longrightarrow1,
\end{equation}
which proves \eqref{eq:fixed-grid-separation}.
\end{proof}

For a square-domain finite-pixel family satisfying the approximation premise
of Theorem~\ref{thm:transfer},
\begin{equation}
 \lim_{n\to\infty}\lim_{h\to\infty}
       d_{\TV}(P_{\gamma,n},Q_{\gamma,n,h})=0,
 \qquad
 \lim_{h\to\infty}\lim_{n\to\infty}
       d_{\TV}(P_{\gamma,n},Q_{\gamma,n,h})=1,
 \qquad 0<\gamma<2.
 \label{eq:noncommuting-limits}
\end{equation}
For every sequence $h_j\uparrow\infty$, \eqref{eq:fixed-grid-separation}
also permits $n_j\uparrow\infty$ such that
\[
 d_{\TV}(P_{\gamma,n_j},Q_{\gamma,n_j,h_j})>1-j^{-1}.
\]

The periodic estimate gives explicit sufficient joint regimes.  Fix
$0<\Gamma<2$ and admissible $p,s$ from
Theorem~\ref{thm:periodic-rate}.  If $\bar n_h\ge1$ satisfies
\begin{equation}
 \bar n_h=o\!\left(h^{\beta_\Gamma(s)/2}\right),
 \label{eq:diagonal-resolution}
\end{equation}
then
\begin{equation}
 \sup_{\substack{0\le\gamma\le\Gamma\\1\le n\le\bar n_h}}
 d_{\TV}(Q_{\gamma,n,h}^{\T},P_{\gamma,n}^{\T})\longrightarrow0.
 \label{eq:joint-periodic-limit}
\end{equation}
For $0<\Gamma<\sqrt2$, \eqref{eq:periodic-second-moment-rate} also allows
\begin{equation}
 \bar n_h=
 \begin{cases}
  o(h^{1/2}),&0<\Gamma<1,\\
  o\{(h/\log h)^{1/2}\},&\Gamma=1,\\
  o(h^{1-\Gamma^2/2}),&1<\Gamma<\sqrt2.
 \end{cases}
 \label{eq:second-moment-joint-regimes}
\end{equation}
For $\sqrt2\le\Gamma<2$, the choice
\eqref{eq:hard-range-choice} turns \eqref{eq:diagonal-resolution} into
$\bar n_h=o(h^{\beta_\Gamma(s_\Gamma)/2})$.  At $\Gamma=0$ the periodic finite and
continuous laws coincide for every $h$ and $n$.

\section{Numerical calibration and statistical performance}
\label{sec:numerics}

Three studies address distinct questions: whether parameter refinement improves
fixed-statistic inference, whether the size--degree residual improves estimation,
and how finite-resolution diagnostics change with $h$.  The first two use the
$h=256$ periodic finite model $Q_{\cdot,256}^{\T}$; the third compares several
spatial resolutions.

\subsection{Finite-model refinement}
\label{subsec:numerical-power}

The first study keeps the calibration library fixed and asks how much finer
parameter boxes improve testing and confidence-set resolution.  The simulator
retains the random total mass, samples vertices uniformly within pixels, and
keeps isolated vertices.  At $h=256$,
\[
 v_{256}=4.559397874424695,
\]
and the parameter domain is
\begin{equation}
 \Theta=[0,1.5]\times[1500,2500],\qquad 0\le\nu\le1.125.
 \label{eq:numerical-domain}
\end{equation}
Validation intensities are drawn uniformly from $[1500,2500]$ and are not
revealed to the inference procedure.  The replication unit is an independent
environment with its Poisson and edge randomness.

The first study uses $R=399$ independent mother simulations on the three
partitions in Table~\ref{tab:design}.  Validation uses 1200 independent graphs,
200 at each
$\gamma\in\{0.35,0.60,0.65,0.95,1.00,1.25\}$.

\begin{table}[htbp]
 \centering
 \caption{Parameter partitions.  The fixed-statistic study uses all levels;
 the residual study uses the fine partition.  The intensity interval is
 $[1500,2500]$.}
 \label{tab:design}
 \begin{tabular}{lrrr}
  \toprule
  Level & Width in $\gamma$ & Width in $n$ & Rectangles\\
  \midrule
  Coarse & 0.100 & 500 & 30\\
  Intermediate & 0.050 & 250 & 120\\
  Fine & 0.025 & 125 & 480\\
  \bottomrule
 \end{tabular}
\end{table}

We test
\begin{equation}
 H_0:\gamma\le0.6\quad\Longleftrightarrow\quad\nu\le0.18
 \label{eq:numerical-null}
\end{equation}
by maximizing the upper-tail value over rectangles covering the null region
and intensity range, as in \eqref{eq:composite-pvalue}.

\begin{table}[htbp]
 \centering
 \small
 \caption{Rejection percentages for \eqref{eq:numerical-null}, based on 200
 independent validation graphs per row.  Degree and edge columns are separate
 level-$0.05$ tests.}
 \label{tab:power}
 \begin{tabular}{rr rrr r}
  \toprule
  &&\multicolumn{3}{c}{Degree statistic $T_D$}&Edge $T_E$\\
  \cmidrule(lr){3-5}
  $\gamma$&$\nu$&Coarse&Intermediate&Fine&Fine\\
  \midrule
  0.35 & 0.06125 & 0.0 & 0.0 & 0.0 & 0.0 \\
  0.60 & 0.18000 & 0.0 & 0.0 & 1.0 & 2.0 \\
  0.65 & 0.21125 & 0.5 & 1.0 & 6.0 & 7.5 \\
  0.95 & 0.45125 & 35.5 & 56.0 & 81.5 & 82.5 \\
  1.00 & 0.50000 & 44.5 & 65.5 & 88.0 & 85.5 \\
  1.25 & 0.78125 & 83.0 & 90.5 & 98.5 & 96.5 \\
  \bottomrule
 \end{tabular}
\end{table}

\begin{figure}[!htbp]
 \centering
 \includegraphics[width=0.91\textwidth]{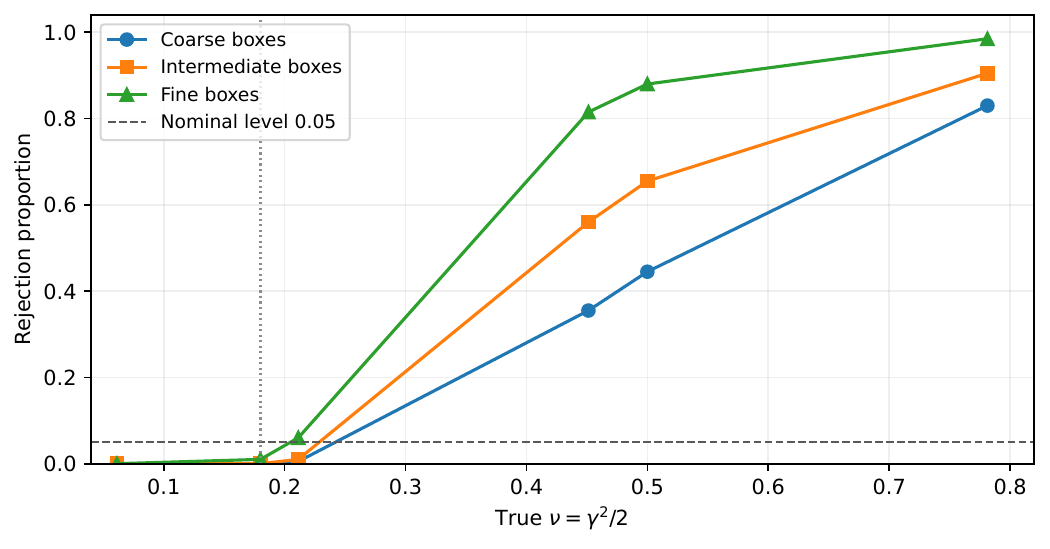}
 \caption{Degree-test rejection proportions.  The vertical line marks
 $\nu=0.18$ and the horizontal line the nominal level $0.05$.}
 \label{fig:power-refinement}
\end{figure}

Refinement raises the degree-test rejection frequency at $\nu=0.5$ from
$44.5\%$ to $88.0\%$ and at $\nu=0.78125$ from $83.0\%$ to $98.5\%$; at the
null boundary the fine degree test rejects $2/200$ graphs.  At
$\nu=0.21125$, the median degree confidence-set hull contracts from $1.125$ to
$0.4303$; the fine-level median lengths are $0.8972$ and $0.8050$ at
$\nu=0.5$ and $0.78125$.

Branch-and-prune returns the same fine-level confidence sets as exhaustive
evaluation on all 2400 statistic--graph pairs, with mean search counts of
$361.55$ rectangles for degree and $363.57$ for edge, compared with $630$ in
the full tree.

\FloatBarrier
\subsection{Size--degree residual calibration}
\label{subsec:numerical-residual-design}

The second study compares degree, edge, and the size--degree residual on the
same fine partition, asking whether the observed size improves point estimation
and confidence-set precision.  The residual score in \eqref{eq:residual-score}
was fixed before this study.  It uses 99 pilot environments, a new
399-environment calibration
library, and 1200 new validation graphs, 200 at each
\begin{equation}
 \gamma\in\{0.60,0.95,1.00,1.15,1.25,1.35\}.
 \label{eq:residual-test-parameters}
\end{equation}
The pilot, calibration, and validation samples are disjoint.  All methods use
the same 480 fine rectangles; for \eqref{eq:pilot-regularization},
\begin{equation}
 R_0=99,
 \qquad \lambda=0.05,
 \qquad (e_1,e_2)=(10^{-4},2.5\times10^{-5}),
 \qquad \varepsilon=10^{-10}.
 \label{eq:pilot-numerical-constants}
\end{equation}
Degree and edge use the two-sided box values \eqref{eq:box-twosided}; the
residual uses \eqref{eq:residual-calibration}.  All methods invert at level
$0.05$ and use \eqref{eq:calibrated-point-estimate}.

For each parameter value, define
\begin{align}
 \widehat{\mathrm{RMSE}}
 &=\left\{\frac1{200}\sum_{i=1}^{200}
       (\widehat\nu_{\mathrm{cal}}(G_i)-\nu)^2\right\}^{1/2},
 \label{eq:empirical-rmse}\\
 \shortintertext{and}
 \widehat{\mathrm{coverage}}
 &=\frac1{200}\sum_{i=1}^{200}\one\{\nu\in C_i^\nu\}.
 \label{eq:empirical-coverage}
\end{align}
Here $C_i^\nu$ is the projected accepted set; widths below are lengths of its
smallest closed interval hull.

\begin{table}[htbp]
 \centering
 \caption{Empirical RMSE in the residual study.  Each row contains 200
 independent validation graphs.}
 \label{tab:residual-rmse}
 \begin{tabular}{rrrr}
  \toprule
  $\nu$&Degree&Edge&Size--degree residual\\
  \midrule
  0.18000 & 0.0637 & 0.0615 & 0.0933 \\
  0.45125 & 0.1513 & 0.1816 & 0.0930 \\
  0.50000 & 0.1710 & 0.2000 & 0.0974 \\
  0.66125 & 0.2189 & 0.2433 & 0.1121 \\
  0.78125 & 0.2344 & 0.2467 & 0.1180 \\
  0.91125 & 0.2270 & 0.2518 & 0.1160 \\
  \bottomrule
 \end{tabular}
\end{table}

\begin{figure}[!htbp]
 \centering
 \includegraphics[width=0.91\textwidth]{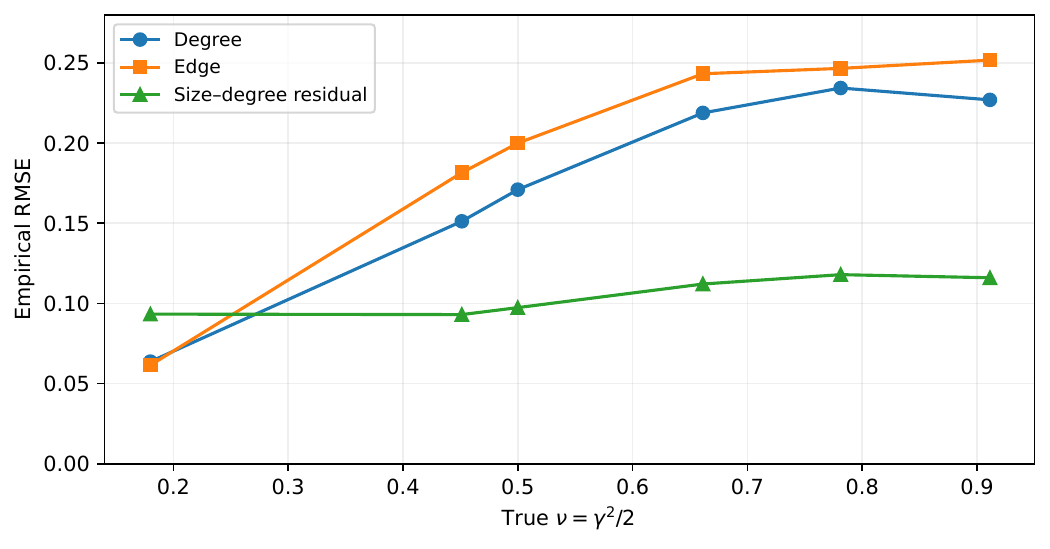}
 \caption{Empirical RMSE with the fine partition fixed; lower is better.}
 \label{fig:residual-rmse}
\end{figure}

For $\nu\ge0.45125$ the residual has smaller RMSE than the degree baseline; at
$\nu=0.78125$ it decreases from $0.2344$ to $0.1180$.  At $\nu=0.18$ the
residual is worse, so the gain is parameter dependent.

\begin{table}[H]
 \centering
 \small
 \caption{95\% confidence sets in the residual study.  Lengths are medians of
 hull lengths over 200 graphs; coverage and empty-set frequency are shown for
 the residual.  The full parameter range has length $1.125$.}
 \label{tab:residual-intervals}
 \begin{tabular}{r rr rr}
  \toprule
  &\multicolumn{2}{c}{Median length}&\multicolumn{2}{c}{Residual (\%)}\\
  \cmidrule(lr){2-3}\cmidrule(lr){4-5}
  $\nu$&Degree&Residual&Coverage&Empty\\
  \midrule
  0.18000 & 0.4303 & 0.4652 & 100.0 & 0.0 \\
  0.45125 & 0.9297 & 0.7425 & 100.0 & 0.0 \\
  0.50000 & 0.9138 & 0.7650 & 100.0 & 0.0 \\
  0.66125 & 0.8622 & 0.7847 & 99.5 & 0.0 \\
  0.78125 & 0.8050 & 0.6738 & 99.5 & 0.5 \\
  0.91125 & 0.7638 & 0.5997 & 98.0 & 0.5 \\
  \bottomrule
 \end{tabular}
\end{table}

\begin{figure}[H]
 \centering
 \includegraphics[width=0.91\textwidth]{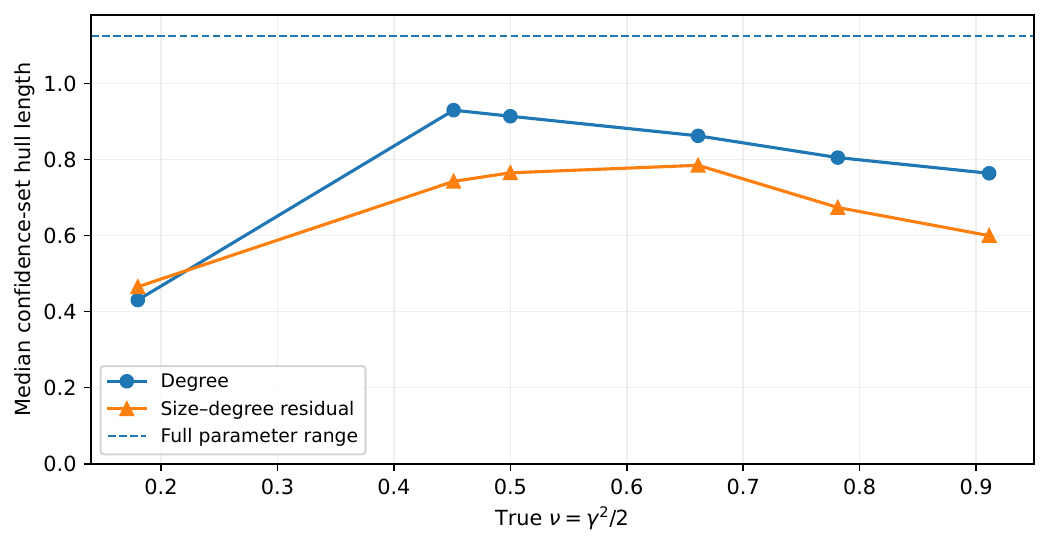}
 \caption{Median confidence-set hull length for degree and residual scores; lower is narrower.}
 \label{fig:residual-intervals}
\end{figure}

At $\nu=0.78125$ and $0.91125$, the residual median width decreases from
$0.8050$ to $0.6738$ and from $0.7638$ to $0.5997$, respectively.  Empirical
residual coverage ranges from $98\%$ to $100\%$ in the displayed groups.

\FloatBarrier
\subsection{Multiresolution diagnostics}
\label{subsec:numerical-paired}

The third study asks whether increasing $h$ improves the finite-resolution
approximation in three observable ways: covariance, expected edge count, and a
fixed residual decision.  For $\gamma^2<2$, the periodic pixel and continuous
expected edge counts are
\begin{align}
 \EE_{Q_{\gamma,n,h}^{\T}}E
 &=\frac{n^2}{2}\iint_{(\T^2)^2}
 k_n^{\T}(x,y)e^{\gamma^2K_h^{\T}(x,y)}\dd x\dd y,
 \label{eq:pixel-edge-mean}\\
 \shortintertext{and}
 \EE_{P_{\gamma,n}^{\T}}E
 &=\frac{n^2}{2}\iint_{(\T^2)^2}
 k_n^{\T}(x,y)e^{\gamma^2K_{\T}(x-y)}\dd x\dd y.
 \label{eq:continuum-edge-mean}
\end{align}
Let $D_h(\gamma,n)$ be one minus their ratio.

\begin{table}[!htbp]
 \centering
 \small
 \caption{Spectral and edge-mean diagnostics for the periodic sequence.  The
 two right columns are $100D_h(\gamma,2000)$ in percent; smaller values mean
 closer agreement of this second-order observable.}
 \label{tab:spectral-refinement}
 \begin{tabular}{rrrrr}
  \toprule
  $h$&$v_h-\log h$&$K_h^{\T}((1/4,0),0)$
       &$\gamma=1$&$\gamma=1.25$\\
  \midrule
  256  & $-0.985780$ & 0.177037 & 16.952 & 49.691\\
  512  & $-0.985794$ & 0.177026 &  8.402 & 36.623\\
  1024 & $-0.985798$ & 0.177023 &  4.143 & 26.954\\
  2048 & $-0.985799$ & 0.177022 &  2.052 & 19.867\\
  4096 & $-0.985799$ & 0.177022 &  1.020 & 14.658\\
  \bottomrule
 \end{tabular}
\end{table}

The covariance columns stabilize over the displayed resolutions and the
edge-mean deficits decrease.  The local doubling exponents between
$h=2048$ and $4096$ are $1.008$ for $\gamma=1$ and $0.439$ for
$\gamma=1.25$, close to $2-\gamma^2=1$ and $0.4375$.

For a score-level diagnostic, fix the $h=256$ residual pilot and calibration
library.  Use
\[
 B^{(1)}=[1,1.025]\times[2000,2125],
 \qquad
 B^{(1.25)}=[1.25,1.275]\times[2000,2125],
\]
and let $c_{B^{(\gamma)}}$ be the twentieth largest upper calibration bound.
Thus
\begin{equation}
 p_{B^{(\gamma)}}^\rho(G)\le0.05
 \quad\Longleftrightarrow\quad
 \rho_{B^{(\gamma)}}(G)>c_{B^{(\gamma)}}.
 \label{eq:frozen-decision}
\end{equation}
The score and threshold remain fixed across resolutions.  Common Gaussian
modes, coupled Cox counts and positions, and shared edge marks give paired
graphs with the required finite-model marginals.

\begin{table}[!htbp]
 \centering
 \small
 \caption{Paired stability of the frozen residual score at $n=2000$.
 Correlation is the Pearson correlation of paired scores; up/down are
 $0\to1$ and $1\to0$ threshold crossings.}
 \label{tab:paired-decisions}
 \begin{tabular}{crrrrr}
  \toprule
  $h\to h'$&$\gamma$&Pairs&Median $|\rho_{h'}-\rho_h|$
      &Correlation&Up/down\\
  \midrule
  256\,$\to$\,1024  & 1.00 & 600 & 0.108028 & 0.860628 & 0/0\\
  256\,$\to$\,1024  & 1.25 & 600 & 0.171825 & 0.829341 & 4/2\\
  1024\,$\to$\,2048 & 1.00 & 200 & 0.025867 & 0.962873 & 0/1\\
  1024\,$\to$\,2048 & 1.25 & 200 & 0.041658 & 0.977321 & 0/0\\
  \bottomrule
 \end{tabular}
\end{table}

The later comparison has smaller score gaps and higher correlations in both
groups.  Because the threshold is frozen, these are stability diagnostics,
not coverage calculations.

\FloatBarrier

\end{document}